  \def\N{{\mathbb N}}
\def\({\left(}       \def\){\right)}
\newtheorem{prop}{Proposition}
\newtheorem{lemma}[prop]{Lemma}
\newtheorem{thm}[prop]{Theorem}
\begin{document}
%%% Title
\title[Semigroups of weighted composition operators]{Semigroups of weighted composition operators in spaces of analytic functions}
%%% Information for the first author
\author[I. Ar\'evalo]{Irina Ar\'evalo}
\address{Departamento de Matem\'aticas, Universidad Aut\'onoma de
Madrid, 28049 Madrid, Spain}
\email{irina.arevalo@uam.es}

\author[M. Oliva]{Marcos Oliva}
\address{Departamento de Matem\'aticas, Universidad Aut\'onoma de
Madrid-ICMAT, 28049 Madrid, Spain}
\email{marcos.delaoliva@uam.es}
\thanks{The first author is supported by MTM2015-65792-P from MINECO and FEDER/EU and partially by the Thematic Research Network MTM2015-69323-REDT, MINECO, Spain. The second author is funded by the European Research Council under the grant agreement 307179-GFTIPFD}
%%% General info
\subjclass[2010]{30H10, 30H20, 47D06, 47B33}
\keywords{Semigroups of weighted composition operators, Mixed norm spaces, Weighted Banach spaces}
%%%%%%%%%%%%%%%%%%%%%%%%%%%
\date{\today}
\begin{abstract}
We study the strong continuity of weighted composition semigroups of the form  $T_tf=\varphi_t'\left(f\circ\varphi_t\right)$ in several spaces of analytic functions. First we give a general result on separable spaces and use it to prove that these semigroups are always strongly continuous in the Hardy and Bergman spaces. Then we focus on two non-separable family of spaces, the mixed norm and the weighted Banach spaces. We characterize the maximal subspace in which a semigroup of analytic functions induces a strongly continuous semigroup of weighted composition operators depending on its Denjoy-Wolff point, via the study of an integral-type operator. 
\end{abstract}
\maketitle

\section{Introduction}

%\subsection{Weighted composition semigroups}
Let $\{T_t:t\geq0\}$ be a family of bounded operators on a Banach space $X$ of analytic functions on the unit disk. The family is a \textbf{semigroup of bounded operators} if it satisfies the following conditions,

\begin{enumerate}[1.]
\item $T_0$ is the identity in the space of bounded operators on $X,$
\item $T_{t+s}=T_tT_s,$ for all $t,s\geq0.$
\end{enumerate}

Moreover, the semigroup is called \textbf{strongly continuous} if $$\lim_{t\to0^+}\|T_tf-f\|_X=0.$$  

One of the most commonly studied semigroups of bounded operators is the semigroup of composition operators, where, for $t\geq0,$ $T_t=C_{\varphi_t},$ and $\{\varphi_t:t\geq0\}$ is a family of analytic self-maps of the disk. Such a family has to satisfy some conditions in order to generate a semigroup of composition operators, namely \begin{enumerate}[1'.]
\item $\varphi_0$ is the identity in $\mathbb{D},$
\item $\varphi_{t+s}=\varphi_t\circ\varphi_s,$ for all $t,s\geq0,$
\item \label{pointwise}$\varphi_t\to\varphi_0$ as $t\to0$ uniformly on compact sets of $\mathbb{D}.$
\end{enumerate}

The family is then called a (one-parameter) \textbf{semigroup of analytic functions}. 

Given a semigroup of analytic functions, the induced family of weighted composition operators $T_t$ given by $T_tf=\varphi_t'\left(f\circ\varphi_t\right)$ is a \textbf{semigroup of weighted composition operators}, as long as they are bounded on $X$. Note that, from now on, we will write $\varphi_t'(z)=\frac{\partial \varphi_t}{\partial z}(z)$ to distinguish it from $\frac{\partial \varphi_t}{\partial t}(z)$. We want to understand operator theory properties of the semigroup of operators, such as spectrum, ideals or dynamics, in terms of geometric function theory of the semigroup of analytic functions, and the first step is to characterize the strong continuity.

The weighted composition operators of this type are related to the isometries of the Hardy space (see \cite{Forelli}) and the semigroups were studied, in the context of the BMOA space, by Stylogiannis in \cite{Sty}. Other semigroups of weighted composition operators, with more general multiplication symbols, were studied by Siskakis in \cite{Siswco}.

In this paper we are interested in the strong continuity of these weighted composition semigroups in several spaces of analytic functions, including the Hardy and Bergman spaces, and non-separable spaces such as the mixed norm spaces and weighted Banach spaces. We will see that there are fundamental differences in the characterization of the strongly continuous weighted composition semigroups depending on the separability properties of the space.

In \cite{BerPor} the authors prove the following basic properties of semigroups of analytic functions that will be useful for us:
\begin{itemize}
\item If $\{\varphi_t\}$ is a semigroup, then each map $\varphi_t$ is univalent.
\item The \textbf{infinitesimal generator} of $\{\varphi_t\}$ is the function $$G(z):=\lim_{t\to0^+}\frac{\varphi_t(z)-z}{t},\,z\in\mathbb{D}.$$ This convergence holds uniformly on compact subsets of $\mathbb{D},$ so $G\in\mathcal{H}(\mathbb{D}).$ The generator satisfies $$G(\varphi_t(z))=\frac{\partial\varphi_t(z)}{\partial t}=G(z)\frac{\partial\varphi_t(z)}{\partial z}$$ and characterizes the semigroup uniquely.
\item The function $G$ has a unique representation $$G(z)=(\overline{b}z-1)(z-b)P(z),\,z\in\mathbb{D},$$ where $P\in\mathcal{H}(\mathbb{D})$ with $\text{Re}\,P\geq0$ in $\mathbb{D}$ and $b\in\overline{\mathbb{D}}$ is the \textbf{Denjoy-Wolff point} of the semigroup, that is, all self-maps in the semigroup share a common Denjoy-Wolff point $b.$
\item If $\{\varphi_t\}$ is non-trivial, there exists a unique univalent function $h:\mathbb{D}\to\mathbb{C},$ called the \textbf{Koenigs function} of $\{\varphi_t\}$ such that:
\begin{itemize}
\item If $b\in\mathbb{D}$ then $h(b)=0,$ $h'(b)=1,$ $$h(\varphi_t(z))=e^{G'(b)t}h(z)$$ for $t\geq0,$ $z\in\mathbb{D}$ and $$h'(z)G(z)=G'(b)h(z),$$ $z\in\mathbb{D}.$ 
\item If $b\in\mathbb{T}$ then $h(0)=0,$ $$h(\varphi_t(z))=h(z)+t$$ for $t\geq0,$ $z\in\mathbb{D}$ and $$h'(z)G(z)=1,$$ $z\in\mathbb{D}.$
\end{itemize} 
\end{itemize}
See also \cite{SisSem} for a review on semigroups of analytic functions and composition operators.
%We will also study an integral-type operator, and thus we will review some properties of the membership of derivatives to the spaces.

The structure of the paper is as follows. Section 2 is an introduction on the spaces of analytic functions that will appear later on. Section 3 will be devoted to the study of general semigroups of weighted composition operators. We first give a result for separable spaces, and prove that in the Hardy and Bergman spaces every semigroup of weighted composition operators is strongly continuous. To study the non-separable case we define the maximal closed linear subspace of a space $X$ such that the semigroup $\{\varphi_t\}$ generates a semigroup of weighted composition operators on it, $$[\varphi_t',X]=\{f\in X: \lim_{t\to0^+}\|\varphi_t'\left(f\circ\varphi_t\right)-f\|_X=0\}.$$ In Sections 4 and 5 we characterize this subspace for the mixed norm spaces and the weighted Banach spaces. We will use the characterizations from Section 3 and, thus, we will need to study the boundedness and compactness of an integral-type operator on such spaces.

Throughout the paper, we will understand $1/\infty$ as zero, the letters $A, B, C, C', K, m$ will denote positive constants, and we will say that two quantities are comparable, denoted by $\alpha\approx\beta,$ if there exist a positive constant $C$ such that $$C^{-1}\alpha\leq\beta\leq C\alpha.$$

\section{Spaces of analytic functions}

\subsection{Hardy and Bergman spaces}

The Hardy space $H^p$ is the space of analytic functions on the unit disk such that its integral means $$M_p(r,f)=\left(\int_0^{2\pi}|f(re^{i\theta})|^p\,\frac{d\theta}{2\pi}\right)^{\frac{1}{p}}$$ for $0<p<\infty$ and $$M_\infty(r,f)=\max_{0\leq\theta<2\pi}|f(re^{i\theta})|$$ are bounded as $r\to 1.$ For every $p,$ $0<p\leq \infty,$ the polynomials are dense in $H^p,$ and if $1\leq p\leq\infty,$ the space $H^p$ is a Banach space with the norm $$\left\|f\right\|_{H^p}=\lim_{r\to1}\left(\frac{1}{2\pi}\int_0^{2\pi}|f(re^{i\theta})|^p d\theta\right)^{1/p},\text{ when }1\leq p<\infty,$$ and $$\left\|f\right\|_{H^\infty}=\sup_{z\in\mathbb{D}}|f(z)|.$$

The Bergman space  $A^p,$ $0<p<\infty,$ is the space of analytic functions on the unit disk such that $$\int_{\mathbb{D}}|f(z)|^p dA(z)<\infty,$$ where $dA$ is the normalized Lebesgue area measure, that is, the subspace of $L^p(\mathbb{D},dA)$ whose elements are analytic functions. For any $0 < p <\infty,$ the Bergman space $A^p$ is a complete space of analytic functions on the unit disk where polynomials are dense. It becomes a Banach space for $1\leq p <\infty$ with the norm $$\|f\|_{A^p}=\left(\int_{\mathbb{D}}|f(z)|^p dz\right)^p.$$ 

More generally, we also define the weighted Bergman space $A^p_\alpha,$ $0<p<\infty,$ $-1<\alpha<\infty,$ of analytic functions on the unit disk such that $$\int_{\mathbb{D}}|f(z)|^p(1-|z|^2)^\alpha\,dA(z)<\infty.$$

The Bergman spaces are closely related to the Hardy spaces. Rewriting the integral condition as $$\frac{1}{\pi}\int_0^1\int_0^{2\pi}|f(re^{i\theta})|^p r d\theta dr = 2\int_0^1M_p^p(r,f) r dr$$ it is clear that every function in the Hardy space $H^p$ belongs also to the Bergman space $A^p.$ Moreover,  Hardy and Littlewood proved $H^p\subseteq A^{2p}$.

\subsection{Mixed norm spaces}

%Let $\mathcal{H}(\mathbb{D})$ be the space of analytic functions on the disk $\mathbb{D}=\{z\in\mathbb{C}:|z|<1\}.$ For $f\in \mathcal{H}(\mathbb{D})$ and $r\in(0,1)$ let $M_p(r,f)$ be the integral mean $$M_p(r,f)=\left(\int_0^{2\pi}|f(re^{i\theta})|^p\,\frac{d\theta}{2\pi}\right)^{\frac{1}{p}}$$ for $0<p<\infty$ and $$M_\infty(r,f)=\max_{0\leq\theta<2\pi}|f(re^{i\theta})|.$$ 

The mixed norm spaces $H(p, q, \alpha),$ $0<p,q\leq\infty,$ $0<\alpha<\infty,$ are the spaces of analytic functions on $\mathbb{D}$ such that $$\int_0^1(1-r)^{\alpha q-1}M_p^q(r,f)\,dr<\infty,$$ for $q<\infty,$ and $$\sup_{0\leq r<1}(1-r)^\alpha M_p(r,f)<\infty$$ for $q=\infty.$
We also define the "little-oh version", $H_0(p,\infty,\alpha),$ as the subspace of functions in $H(p,\infty,\alpha)$ such that $$\lim_{r\to1^-}(1-r)^\alpha M_p(r,f)=0.$$

These spaces are closely related to the Hardy and Bergman spaces, since we can identify the weighted Bergman space $A^p_\alpha,$ $0<p<\infty,$ $-1<\alpha<\infty,$  with the space $H\left(p,p,\frac{\alpha+1}{p}\right)$ and the Hardy space $H^p$ with the limit case $H(p,\infty,0).$ The mixed norm spaces are also related to other spaces of analytic functions, such as Besov and Lipschitz spaces, via fractional derivatives (see \cite[Chapter 7]{JVA}).

They were explicitly defined in Flett's works \cite{Flett}, \cite{Flett2}. Since then, these spaces have been studied by many authors (see \cite{aj}, \cite{Blasco}, \cite{BKV}, \cite{Gab}, \cite{Sledd}). Recently, the mixed norm spaces are mentioned in the works \cite{av}, \cite{avetisyan}, \cite{Ar}, and the monograph \cite{JVA}.

These spaces are Banach for $p,q\geq1,$ and we will denote its norm by $\|f\|_{p,q,\alpha}.$
We have the following results for these spaces \cite[Proposition 7.1.3.]{JVA}:
\begin{prop}	\label{Sarason} For $0\leq r\leq1$, let $f_r(z)=f(rz),$ $z\in\mathbb{D}.$
\begin{itemize}
\item If $f\in H(p,q,\alpha),$ $0<p\leq\infty,$ $0<q,\alpha<\infty,$ then $\|f_r-f\|_{p,q,\alpha}\to0,$ as $r\to1.$
\item If $f\in H_0(p,\infty,\alpha),$ $0<p\leq\infty,$ $0<\alpha<\infty,$ then $\|f_r-f\|_{p,\infty,\alpha}\to0,$ as $r\to1.$ 
\end{itemize}
Moreover, if $f\in H(p,\infty,\alpha)$ and $\|f_r-f\|_{p,\infty,\alpha}\to0,$ as $r\to1,$ then $f\in H_0(p,\infty,\alpha).$
\end{prop}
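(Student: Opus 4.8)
The plan is to deduce all three assertions from two elementary facts: the dilations $f_r$ converge to $f$ uniformly on compact subsets of $\mathbb{D}$, and the integral means are non-decreasing in the radius, so that $M_p(\varrho,f_r)=M_p(r\varrho,f)\le M_p(\varrho,f)$ for every $\varrho\in[0,1)$ and $r\le1$. From here the first bullet is a dominated convergence statement, the second an $\varepsilon$--$\delta$ splitting near the boundary, and the last a soft density argument.

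For the first bullet, fix $f\in H(p,q,\alpha)$ with $0<p\le\infty$ and $0<q,\alpha<\infty$. Since $f_r\to f$ uniformly on the circle $\{|z|=\varrho\}$, we have $M_p(\varrho,f_r-f)\to0$ as $r\to1$ for each fixed $\varrho$. On the other hand the (quasi-)triangle inequality for $M_p$, combined with the monotonicity above, gives a constant $C=C(p)$ with $M_p(\varrho,f_r-f)\le C\,M_p(\varrho,f)$ uniformly in $r\le1$, so the integrands $(1-\varrho)^{\alpha q-1}M_p^q(\varrho,f_r-f)$ are dominated by the fixed integrable function $C^q(1-\varrho)^{\alpha q-1}M_p^q(\varrho,f)$. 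The dominated convergence theorem then yields $\|f_r-f\|_{p,q,\alpha}\to0$ (reading $\|\cdot\|_{p,q,\alpha}$ as the natural quasi-norm when $p$ or $q$ is below $1$).

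For the second bullet, let $f\in H_0(p,\infty,\alpha)$ and $\varepsilon>0$. Choose $\delta_0\in(0,1)$ with $(1-s)^\alpha M_p(s,f)<\varepsilon$ for $s\in[\delta_0,1)$, and then $\delta\in(\delta_0,1)$ with $\delta^2>\delta_0$. For $\varrho\in[\delta,1)$ and $r\in[\delta,1)$, the inequality $1-\varrho\le1-r\varrho$ together with $r\varrho\ge\delta^2>\delta_0$ gives $(1-\varrho)^\alpha M_p(\varrho,f_r)\le(1-r\varrho)^\alpha M_p(r\varrho,f)<\varepsilon$ and $(1-\varrho)^\alpha M_p(\varrho,f)<\varepsilon$, hence $(1-\varrho)^\alpha M_p(\varrho,f_r-f)\le C\varepsilon$ on $[\delta,1)$, uniformly in such $r$. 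For $\varrho\in[0,\delta]$ we bound $(1-\varrho)^\alpha M_p(\varrho,f_r-f)\le\max_{|z|\le\delta}|f_r(z)-f(z)|$, which tends to $0$ as $r\to1$ by uniform convergence on the compact set $\{|z|\le\delta\}$. Combining the two ranges gives $\limsup_{r\to1}\|f_r-f\|_{p,\infty,\alpha}\le C\varepsilon$, and $\varepsilon$ was arbitrary.

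For the last assertion, note first that every dilate $f_r$ already lies in $H_0(p,\infty,\alpha)$: since $f_r$ is analytic on a neighbourhood of $\overline{\mathbb{D}}$, $M_p(\varrho,f_r)=M_p(r\varrho,f)$ stays bounded as $\varrho\to1$, so $(1-\varrho)^\alpha M_p(\varrho,f_r)\to0$. Hence if $\|f_r-f\|_{p,\infty,\alpha}\to0$ then $f$ is an $H(p,\infty,\alpha)$-limit of elements of $H_0(p,\infty,\alpha)$, and the latter is closed in $H(p,\infty,\alpha)$ because for $g\in H_0(p,\infty,\alpha)$ the estimate $(1-\varrho)^\alpha M_p(\varrho,f)\le C\|f-g\|_{p,\infty,\alpha}+C(1-\varrho)^\alpha M_p(\varrho,g)$ forces $\limsup_{\varrho\to1}(1-\varrho)^\alpha M_p(\varrho,f)$ to be arbitrarily small; therefore $f\in H_0(p,\infty,\alpha)$. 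I do not expect an essential obstacle: the only points needing a little care are the interchange of limit and integral in the first bullet — which is exactly what the domination $M_p(\varrho,f_r)\le M_p(\varrho,f)$ licenses — and arranging the boundary estimates in the second bullet to be uniform in $r$ and $\varrho$ simultaneously, which is what the choice $\delta^2>\delta_0$ accomplishes.
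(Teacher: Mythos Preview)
Your proof is correct. Note, however, that the paper does not actually prove this proposition: it is quoted from \cite[Proposition~7.1.3]{JVA}. The paper \emph{does} prove the closely related Theorem~\ref{Sarasonwco} (the weighted-composition analogue for the dilation semigroup $\varphi_t(z)=e^{-t}z$), and your argument for the first two bullets matches that proof almost line for line: pointwise convergence of $M_p(\varrho,f_r-f)$ plus the domination $M_p(\varrho,f_r)\le M_p(\varrho,f)$ feeding into dominated convergence, and an $\varepsilon$--splitting near the boundary for the $H_0$ case.

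For the ``Moreover'' part your route differs from the paper's treatment of the analogous statement in Theorem~\ref{Sarasonwco}. There the paper estimates directly, writing
\[
(1-r)^\alpha M_p(r,f)\lesssim (1-r)^\alpha M_p(r,e^{-t}f_t)+(1-r)^\alpha M_p(r,e^{-t}f_t-f)
\]
and using that $f_t\in H^\infty$ so that the first term vanishes as $r\to1$. You instead observe that each dilate $f_r$ already lies in $H_0(p,\infty,\alpha)$ and then invoke closedness of $H_0(p,\infty,\alpha)$ in $H(p,\infty,\alpha)$. Both arguments work here; your version is slightly more structural and has the virtue of separating the two facts (each $f_r\in H_0$, and $H_0$ is closed) cleanly, while the paper's direct estimate is marginally shorter. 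The paper itself remarks after Theorem~\ref{Sarasonwco} that its direct estimate leans on $f_t$ being bounded, whereas your closure argument does not need this explicitly---it is hidden in the observation that $M_p(\varrho,f_r)$ stays bounded as $\varrho\to1$.
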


Notice that, in the language of semigroups of composition operators on mixed norm spaces, Proposition \ref{Sarason} proves that the semigroup of dilations of the unit disk, defined as $\{\varphi_t\}$ with $\varphi_t(z)=e^{-t}z,$ for all $t\geq 0$ and $z\in\mathbb{D}$, induces a strongly continuous semigroup of composition operators on $H(p,q,\alpha)$ for $q<\infty$ and on $H_0(p,\infty,\alpha),$ but not on $H(p,\infty,\alpha).$

In a similar way we can prove that the semigroup of weighted composition operators induced by the same semigroup of analytic functions is strongly continuous on $H(p,q,\alpha)$ for $q<\infty$ and on $H_0(p,\infty,\alpha),$ but not on $H(p,\infty,\alpha).$ Notice first that the operators $$T_tf(z)=\varphi'_t(z)f(\varphi_t(z))=e^{-t}f(e^{-t}z)$$ are bounded on every mixed norm space. 

\begin{thm}	\label{Sarasonwco} Let $\varphi_t(z)=e^{-t}z,$ $t\geq 0$ and $z\in\mathbb{D}.$
\begin{itemize}
\item If $f\in H(p,q,\alpha),$ $0<p\leq\infty,$ $0<q,\alpha<\infty,$ then $\|T_tf-f\|_{p,q,\alpha}\to0,$ as $t\to0.$
\item If $f\in H_0(p,\infty,\alpha),$ $0<p\leq\infty,$ $0<\alpha<\infty,$ then $\|T_tf-f\|_{p,\infty,\alpha}\to0,$ as $t\to0.$ 
\end{itemize}
Moreover, if $f\in H(p,\infty,\alpha)$ and $\|T_tf-f\|_{p,\infty,\alpha}\to0,$ as $t\to0,$ then $f\in H_0(p,\infty,\alpha).$
\end{thm}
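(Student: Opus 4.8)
The plan is to reduce the three assertions to Proposition~\ref{Sarason} by absorbing the extra scalar factor $e^{-t}$ and comparing $T_tf$ with the dilation $f_{e^{-t}}$. Write $r=e^{-t}$, so that $r\to1^-$ as $t\to0^+$, and note that $T_tf(z)=e^{-t}f(e^{-t}z)=r\,f_r(z)$. Then
\begin{equation*}
T_tf-f=r\,f_r-f=r(f_r-f)+(r-1)f,
\end{equation*}
so by the triangle inequality (or the quasi-triangle inequality, with a constant depending only on $p,q$, in the non-Banach range),
\begin{equation*}
\|T_tf-f\|_{p,q,\alpha}\le r\,\|f_r-f\|_{p,q,\alpha}+(1-r)\,\|f\|_{p,q,\alpha},
\end{equation*}
and likewise for the $q=\infty$ norm. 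The first term tends to $0$ by Proposition~\ref{Sarason} (first bullet for $q<\infty$, second bullet for $H_0(p,\infty,\alpha)$), and the second term tends to $0$ because $1-r\to0$ and $\|f\|_{p,q,\alpha}<\infty$. This immediately yields the first two bulleted claims. One small point to check is that $f_r\in H(p,q,\alpha)$ with norm bounded uniformly in $r$, which is standard since $M_p(\rho,f_r)=M_p(r\rho,f)\le M_p(\rho,f)$ by monotonicity of integral means.

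For the converse (the "Moreover" part), I would run the same identity backwards: from $T_tf=r f_r$ we get
\begin{equation*}
f_r-f=\tfrac1r\big(T_tf-f\big)-\tfrac{1-r}{r}f,
\end{equation*}
hence
\begin{equation*}
\|f_r-f\|_{p,\infty,\alpha}\le \tfrac1r\|T_tf-f\|_{p,\infty,\alpha}+\tfrac{1-r}{r}\|f\|_{p,\infty,\alpha}.
\end{equation*}
If $\|T_tf-f\|_{p,\infty,\alpha}\to0$ as $t\to0^+$, then the right-hand side tends to $0$, so $\|f_r-f\|_{p,\infty,\alpha}\to0$ as $r\to1^-$; by the last statement of Proposition~\ref{Sarason} this forces $f\in H_0(p,\infty,\alpha)$.

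The argument is essentially a bookkeeping exercise, so there is no serious obstacle; the only mild subtlety is the quasi-triangle inequality in the range $0<p<1$ or $0<q<1$, where $\|\cdot\|_{p,q,\alpha}$ is only a quasinorm. There the inequality $\|a+b\|\le C(\|a\|+\|b\|)$ introduces a fixed constant $C=C(p,q)$, which is harmless since both terms on the right still vanish in the limit; alternatively one can use the $d$-norm inequality $\|a+b\|^d\le\|a\|^d+\|b\|^d$ for a suitable $d\le1$. I would state the estimates using a generic constant (the paper's convention for $C$) so the same lines cover the Banach and non-Banach cases simultaneously.
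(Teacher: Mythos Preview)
Your argument is correct and is in fact cleaner than the paper's. The paper does not reduce to Proposition~\ref{Sarason}; instead it reproves that proposition with the extra scalar $e^{-t}$ carried along: dominated convergence for the case $q<\infty$, an $\varepsilon$--splitting at a radius $r_0$ for $H_0(p,\infty,\alpha)$, and, for the converse, the observation that each dilate $f_t$ lies in $H^\infty$ so that $(1-r)^\alpha M_p(r,e^{-t}f_t)\to 0$. Your single algebraic identity $T_tf-f=r(f_r-f)+(r-1)f$ (and its inverse) lets you invoke Proposition~\ref{Sarason} directly for all three statements, avoiding this repetition; the price is only that you must cite Proposition~\ref{Sarason}, whereas the paper's proof is self-contained. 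Either route is fine.

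One small correction: in the displayed identity for the converse you have a sign slip. From $T_tf=r f_r$ one gets
\[
f_r-f=\tfrac{1}{r}(T_tf-f)+\tfrac{1-r}{r}\,f,
\]
with a plus sign on the last term. This is harmless, since the norm estimate you write next is unchanged. Your remarks about the quasi-norm range are also to the point and cover the non-Banach cases adequately.
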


\begin{proof}
Denote by $f_t$ the function $f_t(z)=f(e^{-t}z),$ $z\in\mathbb{D}.$ Since $e^{-t}f_t$ converges uniformly to $f$ as $t\to0$ on the compact subset $|z|\leq r,$ then $$M_p(r,e^{-t}f_t-f)\to0$$ as $t\to0.$
On the other hand, since
\begin{align*}
M_p(r,e^{-t}f_t-f)&\leq 4\left(M_p(r,e^{-t}f_t)+M_p(r,f)\right)\leq4\left(M_p(r,f_t)+M_p(r,f)\right)\\&=4\left(M_p(e^{-t}r,f)+M_p(r,f)\right)\leq 8M_p(r,f),
\end{align*}
the Lebesgue's Dominated Convergence theorem yields 
$$\|T_tf-f\|_{p,q,\alpha}^q=\alpha q\int_0^1(1-r)^{\alpha q-1}M_p^q(r,e^{-t}f_t-f)\,dr\to0$$ as $t\to0.$

If $f\in H_0(p,\infty,\alpha),$ fix $\varepsilon>0$ and let $r_0>0$ be such that $\sup_{r\geq r_0}(1-r)^\alpha M_p(r,f)<\varepsilon/16.$ Let $t_0$ be such that, for every $0<t<t_0$ and $r<r_0,$ $$|e^{-t}f(re^{i\theta-t})-f(re^{i\theta})|<\varepsilon/2.$$ Then 
\begin{align*}
\|T_tf-f\|_{p,\infty,\alpha}&\leq\sup_{r<r_0}(1-r)^\alpha M_p(r,e^{-t}f_t-f)+\sup_{r\geq r_0}(1-r)^\alpha M_p(r,e^{-t}f_t-f)\\&\leq\frac{\varepsilon}{2}\sup_{r<r_0}(1-r)^\alpha +8\sup_{r\geq r_0}(1-r)^\alpha M_p(r,f)\leq \varepsilon.
\end{align*}

Finally, let $f\in H(p,\infty,\alpha)$ such that $\|T_tf-f\|_{p,\infty,\alpha}\to0,$ as $t\to0.$ Fix $\varepsilon>0.$ Let $t_0>0$ such that, for every $t<t_0,$ $$\sup_{0<r<1}(1-r)^\alpha M_p(r,e^{-t}f_t-f)<\varepsilon.$$ Then
\begin{align*}
\lim_{r\to1}(1-r)^\alpha M_p(r,f)&\leq4\left(\lim_{r\to1}(1-r)^\alpha M_p(r,e^{-t}f_t)+\lim_{r\to1}(1-r)^\alpha M_p(r,e^{-t}f_t-f)\right)
\\&\leq 4\left(\|f_t\|_{H^\infty}\lim_{r\to1}(1-r)^\alpha+\varepsilon\right)=4\varepsilon.
\end{align*}
\end{proof}

Notice that, in the proof that if $f\in H(p,\infty,\alpha)$ and $\|T_tf-f\|_{p,\infty,\alpha}\to0,$ as $t\to0,$ then $f\in H_0(p,\infty,\alpha)$ we have used that $f_t$ is bounded, and therefore the argument above is not valid if the semigroup of analytic functions $\{\varphi_t\}$ has radial limits of modulus one. 

A consequence of Proposition \ref{Sarason} is that polynomials are dense in $H(p,q,\alpha),$ $0<~p\leq\infty,$ $0<q,\alpha<\infty$ and $H_0(p,\infty,\alpha),$ $0<p\leq\infty,$ $0<\alpha<\infty.$ The closure in $H(p,\infty,\alpha)$ of the set of all analytic polynomials is $H_0(p,\infty,\alpha).$

This result was also proved by Lusky in \cite{Lus96} in a more general setting. He also proved the following theorem.

\begin{thm}\label{dualmixed}
The space $H(p,q,\alpha)$ is reflexive for $1<q<\infty$ and $$H_0(p,\infty,\alpha)^{**}=H(p,\infty,\alpha).$$
\end{thm}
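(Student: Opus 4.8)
The plan is to identify $H(p,q,\alpha)$ (resp. $H_0(p,\infty,\alpha)$) with a closed subspace of a vector-valued sequence space obtained by dyadic decomposition, and then to transfer the known duality theory of such sequence spaces. Concretely, for $n\geq 0$ let $R_n f(z)=\sum_{2^n\leq k<2^{n+1}}\widehat f(k)z^k$ be the Paley--Littlewood blocks, and recall that $f\mapsto\bigl(R_nf\bigr)_{n\geq0}$ gives, up to equivalence of norms, an embedding of $H(p,q,\alpha)$ into the $\ell^q$-sum (with weights $2^{-n\alpha q}$) of the spaces $H^p$, and of $H_0(p,\infty,\alpha)$ into the corresponding $c_0$-sum; this is exactly the description Lusky uses in \cite{Lus96}. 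Since $\ell^q$-sums of Banach spaces are reflexive when each summand is reflexive and $1<q<\infty$, and since $H^p$ is reflexive for $1<p<\infty$, reflexivity of $H(p,q,\alpha)$ will follow once we check that the block-decomposition operator has closed range and a bounded inverse on its range, so that $H(p,q,\alpha)$ is \emph{isomorphic} to a complemented (indeed closed) subspace of a reflexive space, hence reflexive. For the endpoint $q=\infty$ the same decomposition realizes $H_0(p,\infty,\alpha)$ as (isomorphic to) a subspace of a $c_0$-sum and $H(p,\infty,\alpha)$ as a subspace of the corresponding $\ell^\infty$-sum, and the classical fact that $(c_0(X_n))^{**}=\ell^\infty(X_n)$ when each $X_n$ is reflexive gives the bidual statement, provided the subspace in question is preserved under taking biduals.

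The steps, in order, would be: (i) set up the dyadic block operators $R_n$ and quote (or reprove via standard multiplier estimates, e.g. Marcinkiewicz-type multipliers on $H^p$ for $1<p<\infty$) that $\|f\|_{p,q,\alpha}\approx\bigl\|(2^{-n\alpha}\|R_nf\|_{H^p})_n\bigr\|_{\ell^q}$ for $1\le p<\infty$, $0<q\le\infty$, with the obvious modification for the little-oh space; (ii) deduce that $J\colon f\mapsto(R_nf)_n$ is an isomorphism onto a closed subspace $Y$ of the weighted $\ell^q$-sum $E_q:=\ell^q_{\{2^{-n\alpha}\}}(H^p)$; (iii) invoke that $E_q$ is reflexive for $1<p,q<\infty$ (reflexivity passes to $\ell^q$-sums and to closed subspaces), concluding reflexivity of $H(p,q,\alpha)$; (iv) for $q=\infty$, observe $J$ maps $H_0(p,\infty,\alpha)$ isomorphically onto a closed subspace $Y_0$ of $E_0:=c_0_{\{2^{-n\alpha}\}}(H^p)$ and $H(p,\infty,\alpha)$ isomorphically onto a closed subspace $Y_\infty$ of $E_\infty:=\ell^\infty_{\{2^{-n\alpha}\}}(H^p)$; (v) use $E_0^{**}=E_\infty$ (as $H^p$ is reflexive) together with the fact that $R_n$ extends to a weak$^*$-continuous projection-type operator to show that the bidual of $Y_0$ inside $E_0^{**}=E_\infty$ is exactly $Y_\infty$, i.e. the annihilator conditions cutting out $Y_0$ in $E_0$ are the weak$^*$-closed conditions cutting out $Y_\infty$ in $E_\infty$; this yields $H_0(p,\infty,\alpha)^{**}=H(p,\infty,\alpha)$.

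The main obstacle is step (i)--(ii) for general $p$: the equivalence $\|f\|_{p,q,\alpha}\approx\|(2^{-n\alpha}\|R_nf\|_{H^p})_n\|_{\ell^q}$ is exactly the point where one needs $1<p<\infty$ (so that the dyadic Riesz-type projections are bounded on $H^p$, via a vector-valued Marcinkiewicz multiplier theorem), and one must be careful that the statement as quoted from Lusky already presupposes $p\ge 1$ in the Banach range; I would simply cite \cite{Lus96} for this equivalence rather than reprove it. The second delicate point is step (v): one must verify that the weak$^*$-topology on $E_\infty=E_0^{**}$ interacts correctly with the block operators so that a norming/annihilator description of $Y_0$ passes to a weak$^*$-closed description of $Y_\infty$; the clean way is to note that $(R_n)$ gives a \emph{shrinking} finite-dimensional-like decomposition of $H_0(p,\infty,\alpha)$ in the sense that the partial-sum operators $S_N=\sum_{n\le N}R_n$ are uniformly bounded, $S_Nf\to f$ in $H_0(p,\infty,\alpha)$, and $S_N^{**}g\to g$ weak$^*$ in $H(p,\infty,\alpha)$; from this the identification $H_0(p,\infty,\alpha)^{**}=H(p,\infty,\alpha)$ is a standard consequence of the theory of decompositions. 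Everything else (reflexivity ascending to $\ell^q$-sums, descending to closed subspaces, and the elementary bidual computation for $c_0$-sums) is routine and I would not belabor it.
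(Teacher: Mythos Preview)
The paper does not give its own proof of this theorem: immediately before stating it, the authors write that Lusky proved it in \cite{Lus96}, and they simply quote the result. So there is nothing in the paper to compare your argument against beyond the citation itself; your sketch is in effect an attempt to reconstruct Lusky's proof, and in broad outline it is the right strategy (dyadic block decomposition, identification with a weighted $\ell^q$- or $c_0$-sum, then standard duality for sums).

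One point worth tightening: you worry that the approach needs $1<p<\infty$ because ``$H^p$ is reflexive for $1<p<\infty$''. But the blocks $R_nf=\sum_{2^n\le k<2^{n+1}}\widehat f(k)z^k$ are polynomials, so each summand in your $\ell^q$-sum is a \emph{finite-dimensional} space, hence reflexive for every $p$. This is exactly why Lusky's reflexivity statement holds for all $p$ with $1<q<\infty$, not just $1<p<\infty$. The genuine $p$-dependence is only in step~(i), the norm equivalence $\|f\|_{p,q,\alpha}\approx\bigl\|(2^{-n\alpha}\|R_nf\|_{H^p})_n\bigr\|_{\ell^q}$; Lusky handles this for general $p$ (including $p=1,\infty$) by working with Ces\`aro-type or de~la~Vall\'ee~Poussin means rather than bare Riesz projections, so you should cite \cite{Lus96} directly for that equivalence rather than invoking Marcinkiewicz multipliers, which would indeed restrict you to $1<p<\infty$. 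With that adjustment your outline is sound.
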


We list some properties of the functions in these spaces that will be needed later. They can be found in \cite{Ar}.

\begin{prop}\label{growth}
Let $0<p,q\leq\infty$ and $0<\alpha<\infty.$ There exists $C>0$ such that for every $f\in H(p,q,\alpha)$ and $z\in\mathbb{D},$
$$|f(z)|\leq \frac{C\|f\|_{p,q,\alpha}}{(1-|z|)^{\alpha+\frac{1}{p}}}.$$
Moreover, for every $z\in\mathbb{D}$ the function $$f_z(w)=\frac{(1-|z|^2)^{\alpha+\frac{1}{p}}}{(1-\bar{z}w)^{2(\alpha+\frac{1}{p})}}$$ belongs to $H_0(p,\infty,\alpha)$ (and therefore $f_z\in H(p,q,\alpha)$ for $0<q\leq\infty$), $$\|f_z\|_{p,q,\alpha}\approx 1 \text{\quad and \quad} |f_z(z)|=\frac{1}{(1-|z|^2)^{\alpha+\frac{1}{p}}}.$$
\end{prop}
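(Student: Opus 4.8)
The plan is to derive the pointwise growth estimate from the definition of the mixed norm and standard facts about integral means, and then to verify the claimed properties of the test functions $f_z$ by an explicit estimate of $M_p(r,f_z)$.

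First I would prove the growth estimate. The starting point is the elementary fact that for an analytic function $g$ on $\D$ and $r<\rho<1$ one has $|g(re^{i\t})|\lesssim M_p(\rho,g)/(\rho-r)^{1/p}$; this follows from subharmonicity of $|g|^p$ together with the mean value property on a disk of radius $\rho-r$ centered at $re^{i\t}$ (or, for $p\ge 1$, from the Cauchy/Poisson formula). Fixing $z$ with $|z|=r$ and taking $\rho=(1+r)/2$, this gives $|f(z)|\lesssim M_p(\rho,f)/(1-r)^{1/p}$. Now I use that $M_p(\rho,f)$ controls the norm: since $M_p(\rho,f)$ is increasing in $\rho$, for any $\rho<s<1$ we have
\begin{align*}
\|f\|_{p,q,\alpha}^q \;\geq\; \alpha q\int_\rho^s (1-t)^{\alpha q-1}M_p^q(t,f)\,dt \;\geq\; M_p^q(\rho,f)\bigl((1-\rho)^{\alpha q}-(1-s)^{\alpha q}\bigr),
\end{align*}
and letting $s\to 1$ gives $M_p(\rho,f)\leq \|f\|_{p,q,\alpha}/(1-\rho)^\alpha$ for $q<\infty$ (the case $q=\infty$ is immediate from the definition of the supremum norm). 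Combining with $1-\rho=(1-r)/2$ yields $|f(z)|\lesssim \|f\|_{p,q,\alpha}/(1-r)^{\alpha+1/p}$, as desired; this is uniform in $q$ up to the implied constant.

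Next I would handle the test functions $f_z(w)=(1-|z|^2)^{\alpha+1/p}(1-\bar z w)^{-2(\alpha+1/p)}$. The value $|f_z(z)|=(1-|z|^2)^{-(\alpha+1/p)}$ is a direct computation. For the norm, set $\beta=\alpha+1/p$ and recall the classical estimate $\int_0^{2\pi}|1-\bar z re^{i\t}|^{-2\beta p}\,d\t/2\pi \approx (1-r|z|)^{1-2\beta p}$ when $2\beta p>1$ (here $2\beta p = 2\alpha p + 2 > 1$ since $\alpha>0$), so that $M_p(r,f_z)\approx (1-|z|^2)^{\beta}(1-r|z|)^{1/p-2\beta}$. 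Then $(1-r)^\alpha M_p(r,f_z)\approx (1-|z|^2)^\beta (1-r)^\alpha (1-r|z|)^{1/p-2\beta}$; using $1-r|z|\geq \max(1-r,1-|z|)\gtrsim (1-r)+(1-|z|)$ one checks this is $\lesssim 1$ uniformly in $r$ and $z$, and taking $r$ close to $1$ (so $1-r\ll 1-|z|$) shows it is also $\gtrsim 1$, giving $\|f_z\|_{p,\infty,\alpha}\approx 1$. The limit $(1-r)^\alpha M_p(r,f_z)\to 0$ as $r\to1$ (for $z$ fixed) is clear from the same expression since the factor $(1-r)^\alpha\to 0$ while $(1-r|z|)^{1/p-2\beta}\to(1-|z|)^{1/p-2\beta}$ stays bounded; hence $f_z\in H_0(p,\infty,\alpha)$, and the embedding $H_0(p,\infty,\alpha)\subseteq H(p,q,\alpha)$ with $\|f_z\|_{p,q,\alpha}\approx 1$ follows by integrating the uniform bound $(1-r)^{\alpha}M_p(r,f_z)\lesssim 1$ against $(1-r)^{-1}\,dr$ over, say, $r\in(0,1/2)$ and the rapid decay near $1$ — more carefully, one splits $\int_0^1(1-r)^{\alpha q-1}M_p^q(r,f_z)\,dr$ and uses the explicit form to get a bound independent of $z$, together with a matching lower bound from a fixed subinterval.

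The main obstacle is the two-sided control of $\|f_z\|_{p,q,\alpha}$ for $q<\infty$: the upper bound requires the integral of the explicit mixed-norm density to be uniformly bounded in $z$, and the lower bound requires isolating a region of $r$ (depending on $z$) where $(1-r)^\alpha M_p(r,f_z)$ is bounded below by an absolute constant — the natural choice is $r\in(|z|,(1+|z|)/2)$, on which $1-r\approx 1-|z|\approx 1-r|z|$, so the density is $\approx (1-r)^{-1}$ and integrating over this interval of logarithmic length $\approx 1$ gives the constant. The computation of the integral means $M_p(r,f_z)$ via the standard kernel estimate is routine but must be invoked carefully to keep all constants independent of $z$; everything else is bookkeeping.
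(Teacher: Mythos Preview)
The paper does not give its own proof of this proposition; it is quoted from \cite{Ar} without argument, so there is nothing to compare your approach against. Your outline is the standard one and is essentially correct, with two small points worth fixing.

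First, for the lower bound on $\|f_z\|_{p,\infty,\alpha}$ you write ``taking $r$ close to $1$ (so $1-r\ll 1-|z|$)''. At such $r$ one has $(1-r)^\alpha M_p(r,f_z)\approx (1-|z|)^{\beta}(1-r)^\alpha(1-|z|)^{1/p-2\beta}=\bigl((1-r)/(1-|z|)\bigr)^\alpha\to 0$, so this does \emph{not} give the lower bound. The correct choice---which you yourself use a few lines later for $q<\infty$---is $r\approx|z|$, where $1-r\approx 1-|z|\approx 1-r|z|$ and the expression is $\approx 1$.

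Second, the parenthetical ``and therefore $f_z\in H(p,q,\alpha)$'' in the statement should not be read as an inclusion $H_0(p,\infty,\alpha)\subset H(p,q,\alpha)$, which is false in general (take $M_p(r,f)\sim (1-r)^{-\alpha}/\log\frac{e}{1-r}$ for small $q$). For $q<\infty$ you need the direct computation you sketch; the cleanest way is the Forelli--Rudin type estimate
\[
\int_0^1 (1-r)^{\alpha q-1}(1-r|z|)^{-(2\alpha+1/p)q}\,dr\ \approx\ (1-|z|)^{-(\alpha+1/p)q},
\]
which together with $M_p(r,f_z)\approx (1-|z|^2)^{\alpha+1/p}(1-r|z|)^{1/p-2(\alpha+1/p)}$ gives both $\|f_z\|_{p,q,\alpha}\lesssim 1$ and $\|f_z\|_{p,q,\alpha}\gtrsim 1$ at once, with constants independent of $z$.
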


%In particular, if we denote the point-evaluation functional as $\delta_z,$ $\delta_z(f)=f(z)$, then for every $z\in \mathbb{D}$ $$\|\delta_z\|\approx|f_z(z)|\approx\frac{1}{(1-|z|)^{\alpha+\frac{1}{p}}}.$$

In the subsequent work, the following result will be very useful for us, since it relates the membership of derivatives of functions belonging to a mixed norm space to another mixed norm space. It is based on a theorem by Hardy and Littlewood (see \cite[Thm.~5.5]{Dur}). 

\begin{lemma}\label{derivative}
For $0< p\leq\infty$ and $\alpha>0,$
\begin{enumerate}[{(1)}]
\item\label{HLO} $M_p(r,f)=O(1-r)^{-\alpha}\Leftrightarrow M_p(r,f')=O(1-r)^{-(\alpha+1)}$ (that is, $f\in H(p,\infty,\alpha)$ if and only if $f'\in H(p,\infty,\alpha+1)$).
\item\label{HLo} $M_p(r,f)=o(1-r)^{-\alpha}\Leftrightarrow M_p(r,f')=o(1-r)^{-(\alpha+1)}$ (that is, $f\in H_0(p,\infty,\alpha)$ if and only if $f'\in H_0(p,\infty,\alpha+1)$).
\end{enumerate}
\end{lemma}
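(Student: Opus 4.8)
The bracketed reformulations are purely definitional: membership in $H(p,\infty,\alpha)$ is exactly $M_p(r,f)=O\big((1-r)^{-\alpha}\big)$ and membership in $H_0(p,\infty,\alpha)$ is exactly $M_p(r,f)=o\big((1-r)^{-\alpha}\big)$, and likewise with $\alpha+1$. So the content is the two equivalences for integral means, which is the classical Hardy--Littlewood estimate \cite[Thm.~5.5]{Dur}. The plan is to establish its four implications in two groups.

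\emph{From $f'$ to $f$.} Integrating along radii, $f(\rho e^{i\theta})=f(0)+\int_0^\rho f'(se^{i\theta})e^{i\theta}\,ds$, so for $1\le p\le\infty$ Minkowski's integral inequality gives $M_p(\rho,f)\le|f(0)|+\int_0^\rho M_p(s,f')\,ds$ (for $0<p<1$ the same holds up to a constant via a dyadic splitting of $[0,\rho]$ together with the $H^p$ maximal estimate, as in \cite{Dur}). Since $\alpha>0$, $\int_0^\rho(1-s)^{-(\alpha+1)}\,ds\le\alpha^{-1}(1-\rho)^{-\alpha}$, which is the $O$--implication; for the $o$--implication I would write $\int_0^\rho=\int_0^{r_0}+\int_{r_0}^\rho$, choosing $r_0$ with $M_p(s,f')<\varepsilon(1-s)^{-(\alpha+1)}$ on $[r_0,1)$, so that the first integral is bounded (hence $o\big((1-\rho)^{-\alpha}\big)$) and the second is $\le\varepsilon\alpha^{-1}(1-\rho)^{-\alpha}$.

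\emph{From $f$ to $f'$.} For $|z|=r$, Cauchy's formula on the circle of radius $\delta=\tfrac12(1-r)$ about $z$ gives $f'(z)=\frac{1}{2\pi\delta}\int_0^{2\pi}f(z+\delta e^{i\phi})e^{-i\phi}\,d\phi$. When $1\le p<\infty$, Jensen's inequality and Fubini yield
$$M_p^p(r,f')\le\frac{1}{\delta^p}\,\frac{1}{2\pi}\int_0^{2\pi}\left(\frac{1}{2\pi}\int_0^{2\pi}\big|f(re^{i\theta}+\delta e^{i\phi})\big|^p\,d\theta\right)d\phi;$$
for each fixed $\phi$ the inner average is a circular mean of the subharmonic function $|f|^p$ over a circle contained in $\{|w|\le\tfrac{1+r}{2}\}$, hence, by the mean value property of the least harmonic majorant of $|f|^p$ on that disk and the elementary Poisson bound $\tfrac{R+|w|}{R-|w|}$, it is $\le C\,M_p^p(\tfrac{1+r}{2},f)$ with $C$ absolute for $r\ge\tfrac12$ (for $p=\infty$ this is the direct maximum-modulus estimate $M_\infty(r,f')\le\tfrac1\delta M_\infty(\tfrac{1+r}{2},f)$). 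Thus $M_p(r,f')\le\frac{C'}{1-r}\,M_p\big(\tfrac{1+r}{2},f\big)$, and since $1-\tfrac{1+r}{2}=\tfrac12(1-r)$, inserting the hypothesis on $M_p(\cdot,f)$ gives $M_p(r,f')=O\big((1-r)^{-(\alpha+1)}\big)$; the $o$--statement follows verbatim.

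The step I expect to be the real obstacle is the case $0<p<1$ of the implications from $f$ to $f'$: Jensen's inequality runs the wrong way, and substituting a pointwise majorant for $f$ costs a power of $1-r$. I would reduce it to the case $p=2$ by Riesz--Blaschke factorization: on each disk $\{|w|<R\}$ write $f=B_Rg_R$ with $B_R$ a finite Blaschke product carrying the zeros of $f$ there and $g_R$ zero-free, so that $g_R^{p/2}$ is analytic with $M_2(r,g_R^{p/2})=M_p^{p/2}(r,g_R)$; the $p=2$ case then applies to $g_R^{p/2}$, and Hölder's inequality transfers the bound to $g_R'$ and hence to $f'$. Making the constants uniform as $R\uparrow1$ is the one genuinely technical point, and is precisely what \cite[Thm.~5.5]{Dur} does.
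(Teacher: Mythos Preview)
The paper does not actually prove this lemma; it only states it and cites \cite[Thm.~5.5]{Dur} as its source. Your proposal is therefore not competing with any argument in the paper but rather supplying a sketch of the very Hardy--Littlewood theorem the paper invokes. The sketch is sound: integration along radii with Minkowski (and the standard $p<1$ workaround) for the $f'\Rightarrow f$ direction, the splitting $\int_0^{r_0}+\int_{r_0}^\rho$ for the $o$-version, the Cauchy estimate on a circle of radius $\tfrac12(1-r)$ combined with subharmonicity of $|f|^p$ and a Poisson/Harnack bound for $f\Rightarrow f'$ when $p\ge1$, and the Blaschke-factorization reduction to $p=2$ for $0<p<1$. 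These are exactly the ingredients of the proof in \cite{Dur}, so your approach is a faithful expansion of the result the paper cites without proof.
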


Notice that this means that if we denote by $D$ the differentiation operator, $Df(z) = f'(z),$ $z\in\mathbb{D},$ then it is bounded from $H(p,\infty,\alpha)$ to $H(p,\infty,\alpha + 1)$ and from $H_0(p,\infty,\alpha)$ to $H_0(p,\infty,\alpha + 1).$ 

\subsection{Weighted Banach spaces}

A function $v:\mathbb{D}\to \mathbb{R}_+$ is a \textbf{weight} if it is a a bounded, continuous, positive, and radial function. The weighted Banach spaces with weight $v$ are the spaces $$H_v^\infty=\{f\in H(\mathbb{D}):\sup_{z\in\mathbb{D}}v(z)|f(z)|<\infty\}$$ and $$H_v^0=\{f\in H_v^\infty:\lim_{|z|\to1}v(z)|f(z)|=0\}.$$ These spaces appear naturally in the study of the growth of analytic functions, see, for instance, \cite{RuShi}, \cite{ShiWi}, \cite{ShiWi2}, \cite{AnDun}. They are Banach with respect to the norm $$\|f\|_v=\sup_{z\in\mathbb{D}}v(z)|f(z)|.$$

If $\limsup_{|z|\to1}v(z)>0$ we have that $H_v^\infty=H^\infty$ and $H_v^0=\{0\}.$ Therefore, we will only be interested in what it is called a \textbf{typical weight}, that is, a weight with $\lim_{|z|\to1}v(z)=0$. The spaces induced by these typical weights satisfy that $(H^0_v)^{**}=H_v^\infty,$ and that polynomials are dense in $H^0_v.$ 

To each weight there is a weight defined via an associated growth condition, called the \textbf{associated weight} $\tilde{v}$ (see \cite{BBT}), $$\tilde{v}(z)=\frac{1}{\sup\{|f(z)|:f\in H^\infty_v, \|f\|_{v}\leq 1\}},$$ $z\in\mathbb{D}.$ In \cite{BBT} the authors prove that for each $z\in\mathbb{D}$ there exists a $f_z\in H_v^\infty$ such that $|f_z(z)|=\frac{1}{\tilde{v}(z)}$ and $\|f_z\|\leq1.$ Moreover, $H_v^\infty=H_{\tilde{v}}^\infty$ and $H_v^0=H_{\tilde{v}}^0.$

As in the case of the mixed norm spaces, we will be interested in the behavior of the derivatives of functions of $H^\infty_v.$ We will require several definitions in order to study whether the derivative of a function in a weighted Banach space belongs to a space of the same family. 

Firstly, the weighted Bloch space $\mathcal{B}_v$ is the space of analytic functions $f$ on the unit disk $\mathbb{D}$ such that $$\sup_{z\in\mathbb{D}}v(z)|f'(z)|<\infty.$$ The corresponding "little-oh" space is the little Bloch space $\mathcal{B}_v^0$ of functions in $\mathcal{B}_v$ satisfying $$\lim_{|z|\to1}v(z)|f'(z)|=0.$$ The weighted Bloch spaces are clearly related to the weighted Banach spaces, since $f\in\mathcal{B}_v$ if and only if $f'\in H_v^\infty.$

Following \cite{ShiWi2}, a weight $v$ has the property (U) if there exists a positive number $\alpha$ such that the function $r\to v(r)/(1-r)^\alpha$ is almost increasing (that is, there exists a constant $C$ such that, for $r_1<r_2$ we have $v(r_1)/(1-r_1)^\alpha\leq Cv(r_2)/(1-r_2)^\alpha$), and it satisfies the property (L) if there exists a positive number $\beta$ such that the function $r \to v(r)/(1-r)^\beta$ is almost decreasing. A weight is called normal if it satisfies both properties (U) and (L). 

The relation with the derivatives was given by Lusky \cite{Lusky} in the following theorem.

\begin{thm}\label{lusky}
Assume that the weight $v$ has property (U). Then $v$ has property (L) if and only if $H_v^0 = \mathcal{B}^0_{(1-r^2)v(r)}.$ In particular, if $v$ is a normal weight then $H_v^0 = \mathcal{B}^0_{(1-r^2)v(r)},$ and by duality, $H_v^\infty = \mathcal{B}^\infty_{(1-r^2)v(r)}.$
\end{thm}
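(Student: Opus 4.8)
The plan is to prove Theorem~\ref{lusky} (that is, reconstruct the argument for the statement as quoted, which is Lusky's characterization) by translating the growth conditions defining $H_v^0$ into growth conditions on the derivative, using Lemma~\ref{derivative} as the engine. The key observation is that a weight $v$ with property (U) is, up to a bounded factor, pinned between two power weights locally: property (U) gives an exponent $\alpha$ with $v(r)/(1-r)^\alpha$ almost increasing, and property (L) (when it holds) gives an exponent $\beta$ with $v(r)/(1-r)^\beta$ almost decreasing, and then $\beta \le \alpha$. First I would reduce the problem to radial majorants: since $v$ is radial and continuous, membership of $f$ in $H_v^0$ is controlled by $\sup_{|z|=r} v(r)|f(z)| = v(r) M_\infty(r,f)$, so $f \in H_v^0$ means $v(r) M_\infty(r,f) \to 0$ as $r \to 1$. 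The content of the theorem is the equivalence of this with $(1-r^2)v(r) M_\infty(r,f') \to 0$.

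The main step is the forward implication assuming both (U) and (L). Here I would use that, by property (U) with exponent $\alpha$, the condition $v(r)M_\infty(r,f) \to 0$ implies $M_\infty(r,f) = o\big((1-r)^{-\alpha}/\text{(something)}\big)$; more precisely, I would sandwich $v$ between comparable power-type weights on dyadic-type annuli and invoke the little-oh Hardy--Littlewood equivalence, Lemma~\ref{derivative}(\ref{HLo}), to pass from a growth bound on $M_\infty(r,f)$ to one on $M_\infty(r,f')$, with the exponent increased by one. Property (L) is what guarantees that the power-weight comparison is two-sided in the right direction, so that no loss occurs when we multiply back by $(1-r)v(r)$; without (L) one only gets a one-sided estimate and the equality $H_v^0 = \mathcal B^0_{(1-r^2)v(r)}$ can fail, which is exactly why (L) is necessary. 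For the necessity direction, I would argue contrapositively: if $v$ fails (L), construct (or invoke the construction of) a function witnessing that $H_v^0 \ne \mathcal B^0_{(1-r^2)v(r)}$ — typically a lacunary series or an explicit product whose derivative decays like $(1-r^2)v(r)$ but which itself does not lie in $H_v^0$.

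For the ``in particular'' clause: if $v$ is normal it has both (U) and (L) by definition, so the equivalence applies directly and $H_v^0 = \mathcal B^0_{(1-r^2)v(r)}$. The duality statement $H_v^\infty = \mathcal B^\infty_{(1-r^2)v(r)}$ then follows by taking biduals: we are told that $(H_v^0)^{**} = H_v^\infty$ for typical weights, and the analogous fact holds for the little Bloch space with weight $(1-r^2)v(r)$ (noting $(1-r^2)v(r)$ is again a typical weight when $v$ is), so the isometric identification at the level of the little-oh spaces lifts to the bidual level. One should check that $(1-r^2)v(r)$ is indeed bounded, continuous, positive, radial, and tends to $0$ at the boundary — immediate since $v$ is a typical weight — so that $\mathcal B_{(1-r^2)v(r)}$ is a legitimate object in this framework.

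The hard part will be the passage between $v$ and power weights with uniform constants: property (U)/(L) only says $v(r)/(1-r)^\alpha$ is almost monotone, so one must be careful that the comparison constants in ``$f$ grows like $(1-r)^{-\gamma}$'' do not degenerate as $r\to 1$, and that the Hardy--Littlewood lemma, which is stated for pure power weights, can be applied on each annulus and then reassembled. Concretely, the subtlety is that $v$ need not be comparable to a single power weight globally; the fix is to exploit that $o(\cdot)$ statements are stable under the almost-monotonicity bounds, so the little-oh decay transfers even though the big-oh constants would not give anything sharp. I expect this reassembly — and the explicit counterexample construction establishing necessity of (L) — to be where most of the real work lies; the reduction to radial means and the duality bootstrap are routine given the results already quoted in the excerpt.
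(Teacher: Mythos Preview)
The paper does not prove this theorem; it is quoted as a result of Lusky (reference \cite{Lusky}) and used thereafter as a black box. There is therefore no proof in the paper to compare your proposal against.

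On the proposal itself: the plan to reduce to Lemma~\ref{derivative} by sandwiching $v$ between power weights has a genuine gap. Properties (U) and (L) only give $(1-r)^\alpha \lesssim v(r) \lesssim (1-r)^\beta$ with $\beta\le\alpha$ (and in general $\beta<\alpha$, e.g.\ $v(r)=(1-r)\log\frac{e}{1-r}$). Passing from $v(r)M_\infty(r,f)\to 0$ to $M_\infty(r,f)=o((1-r)^{-\alpha})$, applying Lemma~\ref{derivative}(\ref{HLo}) with exponent $\alpha$, and then multiplying back by $(1-r)v(r)\lesssim(1-r)^{\beta+1}$ leaves an uncontrolled factor $(1-r)^{\beta-\alpha}$; the little-oh does not rescue this, contrary to what you suggest. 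Nor can Lemma~\ref{derivative} be applied ``on each annulus and then reassembled'': it is a global asymptotic equivalence, not a local estimate. The standard argument is direct and does not go through power weights at all: for $H_v^0\subset\mathcal B^0_{(1-r^2)v}$ one uses the Cauchy estimate on a disk of radius $\tfrac12(1-|z|)$ about $z$, and property (U) is exactly the doubling condition $v(r)\lesssim v(\tfrac{1+r}{2})$ needed there; for $\mathcal B^0_{(1-r^2)v}\subset H_v^0$ one integrates $f'$ along the radius, and property (L) is exactly what gives $\int_0^r\frac{ds}{(1-s)v(s)}=O\bigl(1/v(r)\bigr)$, which combined with the $o$-decay of $(1-s)v(s)|f'|$ yields the result and also shows why (L) is necessary. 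Your duality paragraph for the ``in particular'' clause is fine.
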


In other words, $f\in H_v^\infty $ if and only if $f'\in H^\infty_{(1-r^2)v(r)}.$ In \cite{BCHMP}, the authors study the Volterra operators on weighted Banach spaces and then apply the results to the semigroups of composition operators on those spaces. Like in our work, they use the fact that, for some weights, derivatives of functions on a weighted Banach space belongs to another weighted Banach space. Following this reference, we say that a weight $v$ is \textbf{quasi-normal} if $H_v^\infty = \mathcal{B}^\infty_{(1-r^2)v(r)}.$ Discussion about different weights, including characterizations of whether a weight is quasi-normal, can be found in \cite{BCHMP}.

%\subsection{Bloch-type spaces}
\section{Semigroups of weighted composition operators in general spaces of analytic functions}

Our first goal is to find the semigroups of analytic functions that induce strongly continuous semigroups of weighted composition operators on Banach spaces of analytic functions for which polynomials are dense. To do this we have the next theorem:

\begin{thm}
Let $\{\varphi_t\}$ be a semigroup of analytic functions and $\{\textbf{T}_t\}$ be the induced bounded operator semigroup on a Banach space of analytic functions $X$ satisfying: 
\begin{enumerate}
\item polynomials are dense in $X$,
\item for every $f,g\in X$ such that $|f|\leq|g|$ in $\mathbb{D}$ then $\|f\|_X\leq C\|g\|_X$ for some constant $C,$
\item $\sup_{0\leq t\leq1}\|\textbf{T}_t\|<\infty$, and
\item $\|\varphi_t-id\|_X\to0$ and $\|\varphi_t'-1\|_X\to0$
\end{enumerate}
Then $\{\textbf{T}_t\}$ is strongly continuous on $X.$
\end{thm}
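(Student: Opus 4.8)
The plan is the classical combination of a uniform operator bound with density of a convenient subclass. Set $M:=\sup_{0\le t\le 1}\|\mathbf{T}_t\|$, which is finite by hypothesis (3). By hypothesis (1) the polynomials are dense in $X$, so a routine $3\varepsilon$-argument reduces strong continuity to proving $\|\mathbf{T}_t p-p\|_X\to 0$ as $t\to 0^+$ for every polynomial $p$: given $f\in X$ and $\varepsilon>0$, choose a polynomial $p$ with $\|f-p\|_X<\varepsilon$, and for $0<t\le 1$ write $\|\mathbf{T}_t f-f\|_X\le\|\mathbf{T}_t(f-p)\|_X+\|\mathbf{T}_t p-p\|_X+\|p-f\|_X\le(M+1)\varepsilon+\|\mathbf{T}_t p-p\|_X$; letting $t\to 0^+$ and then $\varepsilon\to 0$ completes this reduction. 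By linearity it then suffices to treat a single monomial $p(z)=z^n$, $n\ge 0$.

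For the monomial step I would expand $\mathbf{T}_t(z^n)=\varphi_t'\,\varphi_t^{\,n}$ and split
\begin{equation*}
\mathbf{T}_t(z^n)-z^n=(\varphi_t'-1)\,\varphi_t^{\,n}+\bigl(\varphi_t^{\,n}-z^n\bigr).
\end{equation*}
Since every $\varphi_t$ maps $\mathbb{D}$ into $\mathbb{D}$ we have $|\varphi_t(z)|\le 1$ on $\mathbb{D}$, so $|(\varphi_t'-1)\varphi_t^{\,n}|\le|\varphi_t'-1|$ pointwise, while the telescoping identity
\begin{equation*}
\varphi_t^{\,n}-z^n=(\varphi_t-z)\sum_{j=0}^{n-1}\varphi_t^{\,j}\,z^{\,n-1-j}
\end{equation*}
gives $|\varphi_t^{\,n}-z^n|\le n\,|\varphi_t-z|$ on $\mathbb{D}$. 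Applying the domination hypothesis (2) to each of the two summands separately, I obtain
\begin{equation*}
\|\mathbf{T}_t(z^n)-z^n\|_X\le C\,\|\varphi_t'-1\|_X+Cn\,\|\varphi_t-z\|_X,
\end{equation*}
and both terms on the right tend to $0$ as $t\to 0^+$ by hypothesis (4), since $\|\varphi_t-z\|_X=\|\varphi_t-\mathrm{id}\|_X$. This completes the argument.

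The main (and essentially only) obstacle is a bookkeeping one: hypothesis (2) as stated compares the norms of two functions that are \emph{both} assumed to lie in $X$, so before applying it I must check that the auxiliary functions $(\varphi_t'-1)\varphi_t^{\,n}$, $\varphi_t^{\,n}-z^n$, and the individual telescoping terms $\varphi_t^{\,j}z^{\,n-1-j}(\varphi_t-z)$ belong to $X$. This is immediate in every space considered in the paper because $H^\infty$ acts on $X$ by multiplication (for instance $(\varphi_t'-1)\varphi_t^{\,n}\in X$ since $\varphi_t'-1\in X$ by (4) and $\varphi_t^{\,n}\in H^\infty$), and in the purely abstract setting it can be bootstrapped from (1) and (2) together with the continuity of point evaluations on $X$. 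Everything else in the argument is elementary.
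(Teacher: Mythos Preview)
Your proof is correct and essentially identical to the paper's: both reduce to monomials via the uniform bound and density, split $\varphi_t'\varphi_t^{\,n}-z^n=(\varphi_t'-1)\varphi_t^{\,n}+(\varphi_t^{\,n}-z^n)$, use the same telescoping estimate $|\varphi_t^{\,n}-z^n|\le n|\varphi_t-z|$, and then invoke hypotheses (2) and (4). Your final paragraph, flagging that hypothesis (2) presupposes membership of the auxiliary functions in $X$, is a point the paper glosses over; otherwise the two arguments coincide line for line.
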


\begin{proof}
Suppose  $\sup_{0\leq t\leq1}\|\textbf{T}_t\|<\infty.$ Since polynomials are dense by (1), for every $f\in X$ there exists a polynomial $p$ such that $\|p-f\|\leq \varepsilon$. Therefore, \begin{align*}
\|\textbf{T}_tf-f\|_X&=\|\varphi_t'(f\circ\varphi_t)-f\|_X\\&\leq\|\varphi_t'(f\circ\varphi_t)-\varphi_t'(p\circ\varphi_t)\|_X+\|\varphi_t'(p\circ\varphi_t)-p\|_X+\|p-f\|_X\\&=\|\textbf{T}_t(f-p)\|_X+\|\varphi_t'(p\circ\varphi_t)-p\|_X+\|p-f\|_X\\&\leq (\|\textbf{T}_t\|+1)\|p-f\|_X+\|\varphi_t'(p\circ\varphi_t)-p\|_X.
\end{align*}
Let $p = \sum_{n=0}^Na_{n}z^n,$ then $$\varphi_t'(p_k\circ\varphi_t)-p_k=\sum_{n=0}^Na_{n}(\varphi_t'\varphi_t^n-z^n)$$
and from here it is enough to show that, for every $n\geq 0,$ $$\|\varphi_t'\varphi_t^n-~id^n\|_X\to0$$ as $t\to0.$
%To see this, first we have that, since $\varphi_t\to id$ uniformly on compact subsets of the disk, then $\varphi_t'\to 1$ uniformly. Also, 
Notice first that for fixed $n\in\mathbb{Z}_+$ and $z\in\mathbb{D},$ $$\varphi^n_t(z)-z^n=\left(\sum_{k=0}^{n-1}\varphi_t^k(z)z^{n-1-k}\right)(\varphi_t(z)-z)$$ and hence, $$|\varphi_t^n(z)-z^n|\leq n|\varphi_t(z)-z|.$$
Therefore, we have that 
\begin{align*}
|\varphi_t'(z)\varphi_t^n(z)-z^n|&\leq|\varphi_t'(z)\varphi_t^n(z)-\varphi_t^n(z)|+|\varphi_t^n(z)-z^n|\\&\leq \|\varphi_t^n\|_\infty|\varphi_t'(z)-1|+n|\varphi_t(z)-z|
\end{align*}
for $z\in\mathbb{D},$ and from here, by (2) and (4),
$$\|\varphi_t'\varphi_t^n-id^n\|_X\leq \|\varphi_t^n\|_\infty\|\varphi_t'-1\|_X+n\|\varphi_t-id\|_X\to0.$$
\end{proof}

This theorem allows us to prove that any bounded weighted composition semigroup induces a strongly continuous semigroup on a Banach space where polynomials are dense, for instance the classical Hardy and Bergman spaces and the Mixed Norm spaces.

\begin{prop}
Every semigroup of analytic functions for which the family of weighted composition operators $\{T_t\}$ are uniformly bounded induces a strongly continuous semigroup of weighted composition operators on the Hardy spaces $H^p$ and on the mixed norm spaces $H(p,q,\alpha)$ for $1\leq q<\infty$ and $H_0(p,\infty,\alpha)$ (and therefore for every weighted Bergman space $A_\alpha^p$).
\end{prop}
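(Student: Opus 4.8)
The plan is to apply the previous theorem, so the task reduces to verifying its four hypotheses for each space in the list. Hypothesis (1), density of polynomials, is already recorded in Section 2: for $H^p$ it is stated in the definition of the Hardy spaces, and for $H(p,q,\alpha)$ with $q<\infty$ and for $H_0(p,\infty,\alpha)$ it was noted as a consequence of Proposition \ref{Sarason}; since $A^p_\alpha = H(p,p,(\alpha+1)/p)$ and $H^p=H(p,\infty,0)$ this covers the Bergman case too. Hypothesis (2), the monotonicity $|f|\le|g|\Rightarrow\|f\|_X\le C\|g\|_X$, holds with $C=1$ in all these spaces because the norms are built from the integral means $M_p(r,\cdot)$ (or, for $H^\infty$, the sup), and $|f|\le|g|$ forces $M_p(r,f)\le M_p(r,g)$ pointwise in $r$, which is then preserved by the outer integration against the positive measure $(1-r)^{\alpha q-1}dr$ (or the sup). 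Hypothesis (3) is exactly the standing assumption in the proposition: the $\{T_t\}$ are assumed uniformly bounded, in particular $\sup_{0\le t\le1}\|T_t\|<\infty$.

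The only genuine work is hypothesis (4): $\|\varphi_t-\mathrm{id}\|_X\to0$ and $\|\varphi_t'-1\|_X\to0$ as $t\to0^+$. Here I would use the structure of a semigroup of analytic functions. Because the $\varphi_t$ are self-maps of $\D$, we have $\varphi_t-\mathrm{id}\in H^\infty$ with $\|\varphi_t-\mathrm{id}\|_\infty\le 2$, and the pointwise convergence $\varphi_t\to\mathrm{id}$ uniformly on compacta (condition 3' in the introduction) together with an $H^p$-type argument should give norm convergence. Concretely, write $\|\varphi_t-\mathrm{id}\|_X$ and split the integral over $r$ into $r\le r_0$ and $r>r_0$; on $r\le r_0$ the integrand tends to $0$ by uniform convergence on the compact disk $|z|\le r_0$, while on $r>r_0$ the integrand is bounded by a constant (from $\|\varphi_t-\mathrm{id}\|_\infty\le 2$) times the tail $\int_{r_0}^1(1-r)^{\alpha q-1}dr$, which is small for $r_0$ close to $1$ since $\alpha q>0$. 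For $H^p$ one argues the same way with the tail being $\int_{r_0}^1 \frac{d\theta}{2\pi}$-type estimates, or even more simply: $\varphi_t-\mathrm{id}$ is a bounded analytic function converging to $0$ locally uniformly and bounded in $H^\infty\subset H^p$, so dominated convergence on the circle gives $\|\varphi_t-\mathrm{id}\|_{H^p}\to0$.

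For the derivative term $\|\varphi_t'-1\|_X\to0$, I cannot use a uniform bound on $\varphi_t'$, so instead I would exploit the differentiation estimates already available. By Lemma \ref{derivative}, or rather its analogue for general $q$, the differentiation operator is bounded from $H(p,q,\alpha)$ to $H(p,q,\alpha+1)$ (this is the Hardy--Littlewood result cited there); similarly $D\colon H^p\to A^p$ is bounded in a suitable quantitative sense. More robustly, I would use the Cauchy integral formula to control $\varphi_t'-1 = (\varphi_t-\mathrm{id})'$ on $|z|\le r_0$ by $\|\varphi_t-\mathrm{id}\|_\infty$ on $|z|\le (1+r_0)/2$, giving uniform-on-compacta convergence of $\varphi_t'$ to $1$, and then combine this with a uniform $X$-norm bound for $\{\varphi_t'-1 : 0\le t\le 1\}$ to run the same split-the-integral argument as before. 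That uniform bound is the one step that needs the space's structure: in the mixed norm and Bergman cases it follows from the boundedness of $D$ between the relevant spaces applied to the uniformly $H^\infty$-bounded family $\varphi_t-\mathrm{id}$, since $\|\varphi_t'-1\|_{H(p,q,\alpha)}\lesssim \|\varphi_t-\mathrm{id}\|_{H(p,q,\alpha-1)}\lesssim \|\varphi_t-\mathrm{id}\|_{H^\infty}\le 2$ (using the embedding of $H^\infty$ into every mixed norm space with $\alpha>0$), and for $H^p$ from $\|\varphi_t'-1\|_{A^p}\lesssim\|\varphi_t-\mathrm{id}\|_{H^p}$ together with $H^\infty\subseteq H^p$. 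I expect this verification of hypothesis (4), and in particular pinning down the uniform-in-$t$ norm bound for the derivatives in each space, to be the main obstacle; everything else is bookkeeping against the theorem.
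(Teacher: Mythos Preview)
Your approach is the paper's: verify the four hypotheses of the preceding theorem, with (4) the only substantive one, handled by noting that $\varphi_t\to\mathrm{id}$ uniformly on compacta (hence $\varphi_t'\to1$ uniformly on compacta, via Cauchy estimates) so that $M_p(r,\varphi_t-\mathrm{id})\to0$ and $M_p(r,\varphi_t'-1)\to0$ for each fixed $r$, and then invoking Lebesgue's Dominated Convergence Theorem. The paper's proof stops exactly there and does not isolate a dominating function for the derivative term, so your extra machinery (the differentiation operator $D\colon H(p,q,\alpha-1)\to H(p,q,\alpha)$, the embedding $H^\infty\subset H(p,q,\alpha-1)$) and the attendant worry about the range of~$\alpha$ are concerns the paper simply does not enter into.
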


\begin{proof}
Clearly both $H^p$ and $H(p,q,\alpha)$ satisfy properties $(1)$ and $(2),$ while $(3)$ is given by hypothesis, so we only need to check $(4).$ Since $\varphi_t$ tends to $\varphi_0$ uniformly on compact subsets of the disk we have that, likewise, $\varphi_t'\to1$ uniformly on compact subsets. Therefore, for $0<r<1,$ $M_p(r,\varphi_t-\varphi_0)\to 0$ and $M_p(r,\varphi_t'-1)\to 0,$ and we get $(4)$ applying Lebesgue's Dominated Convergence Theorem.
\end{proof}

Once we have studied the separable case, we are interested in the spaces where polynomials are not dense, such as the mixed norm space $H(p,\infty,\alpha)$ and the weighted Banach spaces $H_v^\infty.$ The semigroups of composition operators on these spaces were studied in \cite{ArCoR}, \cite{BCDMPS}, \cite{BCDMS} and \cite{BCHMP}. Following this references, we first prove the existence of a maximal subspace of $X$ where the semigroup of weighted composition operators is strongly continuous.

\begin{prop}
Let $X$ be a Banach space of analytic functions and $\{\varphi_t\}$ a semigroup of analytic functions such that $\sup_{0\leq t\leq 1}\|T_t\|<\infty.$ Then there exists a closed subspace $Y$ of $X$ such that:
\begin{itemize}
\item The induced semigroup $\{T_t\}$ is strongly continuous on $Y.$
\item $T_t(Y)\subset Y$ for every $t\geq0.$
\item Every other subspace of $X$ with the above properties is contained in $Y.$ 
\end{itemize}
\end{prop}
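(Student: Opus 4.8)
The plan is to take $Y$ to be the space $[\varphi_t',X]$ already introduced in the Introduction, that is,
$$Y:=\{f\in X:\ \lim_{t\to0^+}\|T_tf-f\|_X=0\},$$
and to verify the three listed properties directly. That $Y$ is a linear subspace is immediate from the linearity of each $T_t$ and the triangle inequality. The first substantive point is that $Y$ is \emph{closed}, and this is precisely where the standing hypothesis $M:=\sup_{0\le t\le1}\|T_t\|<\infty$ is used. If $f_n\in Y$ and $f_n\to f$ in $X$, then for every $t\in[0,1]$
$$\|T_tf-f\|_X\le\|T_t(f-f_n)\|_X+\|T_tf_n-f_n\|_X+\|f_n-f\|_X\le(M+1)\|f-f_n\|_X+\|T_tf_n-f_n\|_X,$$
so, given $\varepsilon>0$, one first chooses $n$ with $(M+1)\|f-f_n\|_X<\varepsilon/2$ and then $t$ small enough that $\|T_tf_n-f_n\|_X<\varepsilon/2$, which shows $f\in Y$.

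Next I would establish invariance together with strong continuity. Each $T_s$ is bounded on $X$: for $0\le s\le1$ this is contained in the hypothesis (with $T_0=I$), and for $s>1$ it follows from the semigroup law $T_s=T_1^{\lfloor s\rfloor}T_{s-\lfloor s\rfloor}$, which moreover gives $\|T_s\|<\infty$ for all $s\ge0$. Now fix $s\ge0$ and $f\in Y$. Using $T_tT_s=T_{t+s}=T_sT_t$ we obtain
$$\|T_t(T_sf)-T_sf\|_X=\|T_s(T_tf-f)\|_X\le\|T_s\|\,\|T_tf-f\|_X,$$
and the right-hand side tends to $0$ as $t\to0^+$ since $f\in Y$. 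This shows at once that $T_s(Y)\subset Y$ and, reading off the definition of $Y$, that the restriction $\{T_t|_Y\}$ is a strongly continuous semigroup on the Banach space $Y$ (continuity at an arbitrary time $t_0>0$ follows in the same way, using that $\sup_{0\le s\le t_0}\|T_s\|<\infty$).

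Finally, for maximality, let $Z\subset X$ be any closed subspace invariant under all $T_t$ on which the restricted semigroup is strongly continuous. Then for each $f\in Z$ we have $\|T_tf-f\|_X=\|T_tf-f\|_Z\to0$ as $t\to0^+$, hence $f\in Y$; thus $Z\subset Y$. The only genuinely delicate step in the whole argument is the closedness of $Y$, and it rests entirely on the uniform bound $M<\infty$: without it the three-term estimate above breaks down and $Y$ need not be closed. Everything else is a direct consequence of the semigroup identity and the boundedness of the individual operators $T_t$.
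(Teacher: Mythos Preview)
Your proof is correct and follows the standard argument that the paper points to (it omits the proof and cites \cite[Prop.~5.1]{Sty}): define $Y=[\varphi_t',X]$ as the set of $f$ with $\|T_tf-f\|_X\to0$, use the uniform bound on $[0,1]$ for closedness, the commutativity $T_tT_s=T_sT_t$ for invariance, and the pointwise definition of $Y$ for maximality. There is nothing to add.
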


The proof is analogous to \cite[Prop. 5.1]{Sty}. Following this reference, we will denote \textbf{the maximal subspace} $Y$ of the above Proposition as $[\varphi_t',X],$ that is, $$[\varphi_t',X]=\{f\in X: \lim_{t\to0^+}\|T_tf-f\|_X=0\}.$$ We can relate this subspace with the generator of the semigroup of analytic functions. 

\begin{thm}\label{subspace}
Let $\{\varphi_t\}$ be a semigroup of analytic functions with generator $G$. Let $\{T_t\}$ be the induced operator semigroup on $X$ and suppose that $\sup_{0\leq t\leq1}\|T_t\|=M<\infty.$ Then $$[\varphi_t',X]=\overline{\{f\in X:(Gf)'\in X\}}.$$
\end{thm}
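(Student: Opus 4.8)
The plan is to show the two inclusions separately, using the standard theory of $C_0$-semigroups together with the function-theoretic identities for $G$ and $\varphi_t$ recalled in the introduction. First I would set $Y=[\varphi_t',X]$, which by the previous proposition is the maximal closed $T_t$-invariant subspace on which $\{T_t\}$ is strongly continuous; on $Y$ the restriction of $\{T_t\}$ is a genuine $C_0$-semigroup, so it has a densely defined closed generator $\Gamma$ with domain $D(\Gamma)$ dense in $Y$. The key computation is to identify $\Gamma$: for $f$ in a suitable dense subset one differentiates $T_tf(z)=\varphi_t'(z)f(\varphi_t(z))$ at $t=0^+$, using $\partial_t\varphi_t(z)|_{t=0}=G(z)$ and $\partial_t\varphi_t'(z)|_{t=0}=G'(z)$ (the latter from differentiating $\partial_t\varphi_t=G\circ\varphi_t$ in $z$, or equivalently $G(\varphi_t)=G\varphi_t'$), to get
\begin{equation*}
\Gamma f(z)=\frac{d}{dt}\Big|_{t=0^+}\big(\varphi_t'(z)f(\varphi_t(z))\big)=G'(z)f(z)+G(z)f'(z)=(Gf)'(z).
\end{equation*}
So formally $\Gamma f=(Gf)'$, and the heuristic is that $f\in D(\Gamma)$ precisely when $(Gf)'\in X$ (together with $f\in Y$), which makes $[\varphi_t',X]=\overline{D(\Gamma)}=\overline{\{f\in X:(Gf)'\in X\}}$ plausible.

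For the inclusion $\overline{\{f\in X:(Gf)'\in X\}}\subseteq[\varphi_t',X]$: since $[\varphi_t',X]$ is closed it suffices to show that every $f\in X$ with $(Gf)'\in X$ lies in $[\varphi_t',X]$. Writing
\begin{equation*}
T_tf-f=\int_0^t \frac{d}{ds}\big(\varphi_s'(f\circ\varphi_s)\big)\,ds=\int_0^t \varphi_s'\big((Gf)'\circ\varphi_s\big)\,ds=\int_0^t T_s\big((Gf)'\big)\,ds,
\end{equation*}
where the middle equality uses $\partial_s\big(\varphi_s'(z)f(\varphi_s(z))\big)=\varphi_s'(z)\big[G'(\varphi_s(z))f(\varphi_s(z))+G(\varphi_s(z))f'(\varphi_s(z))\big]$ together with the chain-rule identity $\varphi_s'(z)G'(\varphi_s(z))\varphi_s'(z)+\dots$ — more cleanly, one checks directly that $\partial_s\big(\varphi_s'\cdot(f\circ\varphi_s)\big)=\varphi_s'\cdot\big((Gf)'\circ\varphi_s\big)$ by using $\partial_s\varphi_s=G\circ\varphi_s$ and $\partial_s\varphi_s'=(G\circ\varphi_s)'=\varphi_s'\,(G'\circ\varphi_s)$. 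Then $\|T_tf-f\|_X\le \int_0^t\|T_s\big((Gf)'\big)\|_X\,ds\le tM\|(Gf)'\|_X\to0$, which gives $f\in[\varphi_t',X]$. A small technical point: this Bochner-type integral identity must be justified in $X$ (not just pointwise), which follows because $s\mapsto T_s g$ is continuous into $X$ for $g=(Gf)'\in X$ wherever $\{T_s\}$ is strongly continuous — but here I only know strong continuity on the subspace I am trying to characterize, so I would instead verify the identity pointwise in $\mathbb{D}$ and then estimate $M_p$-type means or use property (2)-style domination to pass the bound $\|T_tf-f\|_X\le tM\|(Gf)'\|_X$ to $X$; alternatively, argue that $s\mapsto \varphi_s'(f\circ\varphi_s)$ is differentiable in the weak topology of $X$ with derivative $T_s((Gf)')$ and integrate that.

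For the reverse inclusion $[\varphi_t',X]\subseteq\overline{\{f\in X:(Gf)'\in X\}}$: here I would invoke that $D(\Gamma)$ is dense in $Y=[\varphi_t',X]$, so it is enough to show $D(\Gamma)\subseteq\{f\in X:(Gf)'\in X\}$. If $f\in D(\Gamma)$ then $g:=\Gamma f=\lim_{t\to0^+}(T_tf-f)/t$ exists in $X$; evaluating at a point $z\in\mathbb{D}$ and using that $X$-convergence forces pointwise convergence (the point evaluations are bounded on $X$ — this is where I would want $X$ to have bounded point evaluations, which holds for all the spaces in the paper), we get $g(z)=\lim_{t\to0^+}\big(\varphi_t'(z)f(\varphi_t(z))-f(z)\big)/t=(Gf)'(z)$ for every $z$, hence $(Gf)'=g\in X$. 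That closes the loop. The main obstacle I anticipate is the justification of the differentiation/integration identities directly in the Banach space $X$ without presupposing strong continuity on all of $X$ — i.e., making the manipulation $\partial_s\big(\varphi_s'(f\circ\varphi_s)\big)=T_s((Gf)')$ rigorous as an $X$-valued (or weakly-$X$-valued) statement and controlling the resulting integral; the route through pointwise identities combined with uniform boundedness $\sup_{0\le t\le1}\|T_t\|=M$ and bounded point evaluations is what I would rely on to handle it cleanly.
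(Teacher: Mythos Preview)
Your proposal is correct and matches the paper's approach: for the inclusion $\supseteq$ the paper obtains the same pointwise integral identity $T_tf-f=\int_0^t T_s\big((Gf)'\big)\,ds$ (deriving it by setting $h=Gf$ and using $\varphi_t'=G(\varphi_t)/G$ to write $T_tf-f=\frac{1}{G}\big(h\circ\varphi_t-h\big)$ rather than by differentiating $T_sf$ directly, but the resulting formula and the bound $\|T_tf-f\|_X\le tM\|(Gf)'\|_X$ are identical), and for $\subseteq$ it simply cites \cite{BCDMPS} and \cite{Sty}, which is exactly the generator-domain argument you spell out. The technical point you flag about passing from the pointwise integral identity to the $X$-norm inequality is likewise left implicit in the paper.
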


\begin{proof}
The proof of $[\varphi_t',X]\subseteq\overline{\{f\in X:(Gf)'\in X\}}$ is similar to the proofs of \cite[Thm. 1]{BCDMPS} and \cite[Thm. 5.3]{Sty}, so we omit it here.

%Denote by $\Delta$ the infinitesimal generator of $\{T_t\}$ acting on $[\varphi_t',X]$ and $D(\Delta)$ its domain, then we will show that if $f\in D(\Delta)$ then $(Gf)'\in X.$ Since for $f\in D(\Delta)$ implies $\Delta(f)\in X$ we have $$\lim_{t\to0^+}\left\|\frac{1}{t}(T_t(f)-f)-\Delta(f)\right\|_X.$$ The boundedness of the point evaluation functionals in $X$ means that the norm convergence in $X$ implies uniform convergence on compact subsets of $\mathbb{D},$ that implies pointwise convergence. Then, for each $z\in\mathbb{D}$ we have 
%\begin{align*}
%\Delta(f)(z)&=\lim_{t\to0^+}\frac{\varphi_t'(z)f(\varphi_t(z))-f(z)}{t}=\left(\frac{\partial\left(\varphi_t'(z)f(\varphi_t(z))\right)}{\partial t}\Big|_{t=0}\right)\\&=\left(\frac{\partial(\varphi_t'(z))}{\partial t}\Big|_{t=0}\right)f(\varphi_0(z))+\frac{\partial(\varphi_t(z))}{\partial t}\Big|_{t=0}f'(\varphi_0(z))\\&=G'(z)f(z)+G(z)f'(z)=(Gf)'(z).
%\end{align*}
%This shows that $\Delta(f)(z)=(Gf)'(z)$ for $z\in\mathbb{D}$ and $$D(\Delta)\subseteq\{f\in X:(Gf)'\in X\}.$$ Since $D(\Delta)$ is dense in $[\varphi_t',X]$ we get to the desired inclusion by taking closures.

To prove $\overline{\{f\in X:(Gf)'\in X\}}\subseteq [\varphi_t',X],$ let $f\in X$ with $(Gf)'\in X.$ If we call $Gf=h,$ then $h'=(Gf)'\in X.$ Note that, by the properties of $G$ seen in the introduction, $$(h(\varphi_s(z))'=\varphi_s'(z)h'(\varphi_s(z))=\frac{1}{G(z)}\frac{\partial \varphi_s(z)}{\partial s}h'(\varphi_s(z))=\frac{1}{G(z)}\frac{\partial(h(\varphi_s(z))}{\partial s},$$ and from here, for $t\geq 0$ and $z\in\mathbb{D},$ 
\begin{align*}
T_tf(z)-f(z)&=\varphi'_t(z)f(\varphi_t(z))-f(z)=\frac{G(\varphi_t(z))}{G(z)}f(\varphi_t(z))-f(z)\\&=\frac{1}{G(z)}(h(\varphi_t(z))-h(z))=\frac{1}{G(z)}\int_0^t\frac{\partial h(\varphi_s(z))}{\partial s}ds=\int_0^t(h(\varphi_s(z))'ds.
\end{align*}

Therefore, for $t<1,$ 
\begin{align*}
\|T_tf-f\|_X&=\|\varphi_t'(f\circ\varphi_t)-f\|_X\leq\int_0^t\|(h\circ\varphi_s)'\|_X\,ds\leq\sup_{0\leq s\leq1}\|(h\circ\varphi_s)'\|_X\cdot t\\&=\sup_{0\leq s\leq1}\|T_s(h')\|_X\cdot t\leq \sup_{0\leq s\leq1}\|T_s\|\|h'\|_X\cdot t\leq M\|h'\|_X\cdot t,
\end{align*}
and $\|T_tf-f\|_X\to0$ as $t\to 0^+.$ Taking closures, we have $$\overline{\{f\in X:(Gf)'\in X\}}\subseteq [\varphi_t',X].$$
\end{proof}

Another useful rewriting of this subspace is given by the following operator: For an analytic function $g$ let $W_g:X\to X$ be the operator $$W_gf(z)=g(z)\int_0^z f(\zeta)\,d\zeta$$ for $f\in \mathcal{H}(\mathbb{D}).$ Clearly, $W_{g}f$ is an analytic function, and the operator can be rewritten as $$W_{g}=M_{g}V_{id},$$ where $M_g$ is the pointwise multiplier operator and $V_g$ is the generalized Volterra operator studied by Pommerenke on the Hardy-Hilbert space (see \cite{Pom}) and later on by Aleman and Siskakis (\cite{ASHp}, \cite{ASAp}) on Hardy and Bergman spaces. 

\begin{prop}\label{subspaceoperator}
Let $\{\varphi_t\}$ be a semigroup with associated generator $G.$ Let $X$ be a Banach space of analytic functions with the properties:
\begin{enumerate}[(i)]
\item $X$ contains the constant functions,
%\item For each $b\in\mathbb{D},$ $f\in X\Leftrightarrow\frac{f(z)-f(b)}{z-b}\in X,$ and 
\item If $\{T_t\}$ is the induced semigroup on $X$ then $\sup_{t\in[0,1]}\|T_t\|<\infty.$
\end{enumerate}
Then $$[\varphi_t', X]=\overline{X\cap(W_{\frac{1}{G}}(X)\oplus \mathbb{C})}.$$
Moreover, $$[\varphi_t', X]=\overline{X\cap(W_{\gamma}(X)\oplus \mathbb{C})},$$ where $$W_{\gamma}f(z)=\frac{1}{(1-z)^2P(z)}\int_0^z f(\zeta)\,d\zeta$$ if the Denjoy-Wolff point of $\{\varphi_t\}$ is $b=1$ and, if $X$ also satisfies $f\in X\Leftrightarrow\frac{f(z)-f(b)}{z-b}\in X$ then $$W_{\gamma}f(z)=\frac{1}{P(z)}\int_0^z f(\zeta)\,d\zeta$$ if $b=0$.
\end{prop}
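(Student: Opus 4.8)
The starting point is Theorem~\ref{subspace}, which identifies $[\varphi_t',X]$ with $\overline{\{f\in X:(Gf)'\in X\}}$. So the whole task reduces to rewriting the set $\{f\in X:(Gf)'\in X\}$ in terms of the operator $W_{1/G}$, and then, in the two special cases $b=1$ and $b=0$, simplifying $1/G$ to $\gamma$ using the factorization $G(z)=(\bar b z-1)(z-b)P(z)$ from the introduction. First I would observe that, by the fundamental theorem of calculus, any analytic $h$ on $\D$ with $h'\in X$ can be written as $h(z)=h(0)+\int_0^z h'(\zeta)\,d\zeta$, and conversely $V_{\mathrm{id}}k(z)=\int_0^z k(\zeta)\,d\zeta$ has derivative $k$. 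Thus $\{h\in\mathcal H(\D):h'\in X\}=V_{\mathrm{id}}(X)\oplus\C$, where the $\C$ accounts for the additive constant. Now $f\in X$ satisfies $(Gf)'\in X$ iff $Gf\in V_{\mathrm{id}}(X)\oplus\C$, i.e. iff $f=\frac1G\bigl(V_{\mathrm{id}}k+c\bigr)$ for some $k\in X$, $c\in\C$; since $\frac1G$ is analytic on $\D$ (as $G$ is non-vanishing there, its only zeros being at $b\in\overline\D$), the term $\frac1G V_{\mathrm{id}}k=W_{1/G}k$ lies in $\mathcal H(\D)$, and $\frac{c}{G}$ is $c\cdot W_{1/G}(0)$—hmm, better: note $c/G\in\mathcal H(\D)$ too. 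So $\{f\in X:(Gf)'\in X\}=X\cap\bigl(W_{1/G}(X)+\frac1G\C\bigr)$. To match the stated form $W_{1/G}(X)\oplus\C$, I would use hypothesis (i): the constants lie in $X$, and I need the identity $\frac1G\C + W_{1/G}(X) = \C + W_{1/G}(X)$ as subsets of $\mathcal H(\D)$ modulo the intersection with $X$. This last point needs a small argument: $1/G$ itself may not be in $X$, so one cannot simply absorb $c/G$; instead, the correct reading is that $Gf$ ranges over exactly $\{h:h'\in X\}=V_{\mathrm{id}}(X)\oplus\C$, so $f$ ranges over $\frac1G(V_{\mathrm{id}}(X)\oplus\C)$, and intersecting with $X$ and taking closure gives the claim once one checks $\frac1G(V_{\mathrm{id}}(X)\oplus\C)\cap X$ equals $(W_{1/G}(X)\oplus\C)\cap X$. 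The inclusion $\supseteq$ uses that constants are in $X$ and $W_{1/G}k\in X$ when it happens to be; the inclusion $\subseteq$ is the rewriting just described. I would present this carefully, perhaps phrasing the intermediate set as $\{f\in X: Gf=c+\int_0^\bullet k,\ k\in X,\ c\in\C\}$ and noting $c+\int_0^z k(\zeta)d\zeta$ evaluated and divided by $G$ is precisely an element of $W_{1/G}(X)\oplus\C$ after re-expanding—the subtlety being whether the decomposition $f = (\text{const}) + W_{1/G}(\text{something})$ is literally valid, which requires writing $c/G(z) = c/G(0) + (c/G(z)-c/G(0))$ and recognizing $c/G(z)-c/G(0) = \int_0^z (c/G)'(\zeta)\,d\zeta = W_{1/G}\bigl((c/G)'\cdot G\bigr)\cdot$(not quite)—so I think the cleanest route is to leave it as $[\varphi_t',X]=\overline{X\cap\bigl(\tfrac1G(V_{\mathrm{id}}(X)\oplus\C)\bigr)}$ and then note $\tfrac1G V_{\mathrm{id}}=W_{1/G}$ and $\tfrac1G\C\subseteq W_{1/G}(X)\oplus\C$ whenever... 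Actually the honest statement is that $\overline{X\cap\frac1G(V_{\mathrm{id}}(X)\oplus\C)} = \overline{X\cap(W_{1/G}(X)\oplus\C)}$ because constants are in $X$ and $W_{1/G}(X)$ is a linear space, so adding the constant function (which is in $X$) to $W_{1/G}k$ stays inside the set being closed; conversely $\frac{c}{G} = \frac{c}{G(0)}\cdot 1 + W_{1/G}\bigl(\text{derivative term}\bigr)$ — I would spell out that $\frac{c}{G(z)} - \frac{c}{G(0)} = \int_0^z \left(\frac{c}{G}\right)'\!(\zeta)\,d\zeta$, and $\left(\frac{c}{G}\right)' = -\frac{cG'}{G^2}$, which need not be in $X$, so this direction genuinely requires care and is where I expect the main technical friction. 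I believe the paper's intended argument simply notes that $Gf$ has a derivative in $X$ iff $Gf$ is an antiderivative of an $X$-function plus a constant, hence $f\in\frac1G(V_{\mathrm{id}}(X)\oplus\C)$, and then \emph{defines} membership accordingly; the displayed equality with $W_{1/G}(X)\oplus\C$ should be interpreted with the understanding that the closure swallows the discrepancy.

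For the second part, with $b=1$ we have $G(z)=(\bar 1\cdot z-1)(z-1)P(z)=(z-1)^2P(z)=(1-z)^2P(z)$, so $\frac1G = \frac1{(1-z)^2P(z)}=\gamma$ in the notation of the statement, and the formula $[\varphi_t',X]=\overline{X\cap(W_\gamma(X)\oplus\C)}$ is immediate from the first part. For $b=0$, $G(z)=(\bar 0\cdot z - 1)(z-0)P(z) = -zP(z)$, so $\frac1G = -\frac1{zP(z)}$ and $W_{1/G}f(z) = -\frac1{zP(z)}\int_0^z f(\zeta)\,d\zeta$. To reduce this to $W_\gamma f(z)=\frac1{P(z)}\int_0^z f(\zeta)d\zeta$, I would use the extra hypothesis $f\in X\Leftrightarrow \frac{f(z)-f(b)}{z-b}=\frac{f(z)-f(0)}{z}\in X$: given $f\in X$, set $k\in X$ and write $-\frac1{zP(z)}\int_0^z k = \frac1{P(z)}\cdot\bigl(-\frac1z\int_0^z k(\zeta)d\zeta\bigr)$; the function $z\mapsto \frac1z\int_0^z k(\zeta)\,d\zeta = \int_0^1 k(tz)\,dt$ is analytic on $\D$, and one checks it lies in $X$ precisely when $k$ does, using the stated equivalence (indeed $\frac1z\int_0^z k$ is, up to the map $f\mapsto \frac{f-f(0)}{z}$ applied to $\int_0^z k$, an element of $X$). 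Hence $W_{1/G}(X) = W_\gamma(X)$ as subsets of $\mathcal H(\D)$ — more precisely $W_{1/G}k = -W_\gamma\bigl(\tfrac1z\int_0^\bullet k\bigr)$ and the map $k\mapsto \tfrac1z\int_0^z k$ is a bijection of $X$ — so the two closures coincide. I would verify the bijectivity claim: its inverse is $g\mapsto (zg)' = g + zg'$, and one should confirm this preserves $X$; alternatively, since $\int_0^z:X\to \mathcal H(\D)$ and $f\mapsto\frac{f-f(0)}z:\,X\to X$ by hypothesis, the composite $k\mapsto \frac1z\int_0^z k$ sends $X$ into $X$, and surjectivity follows from the reverse hypothesis direction.

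The main obstacle, as flagged above, is the bookkeeping around the constant term: making rigorous that $\overline{X\cap \tfrac1G(V_{\mathrm{id}}(X)\oplus\C)}$ equals $\overline{X\cap(W_{1/G}(X)\oplus\C)}$, since $\tfrac1G$ times a constant is not literally a constant and need not individually lie in $X$. The resolution is that the closure operation, together with hypothesis (i) guaranteeing $\C\subseteq X$, lets one replace the "twisted" constant $\tfrac c G$ by an honest constant: concretely $f=\tfrac1G(c+V_{\mathrm id}k) \in X$ can be approximated, or exactly decomposed, using that $\tfrac1G(c+V_{\mathrm id}k) - \tfrac{c}{G(0)}$ is an antiderivative vanishing at $0$ of an explicit $\mathcal H(\D)$ function, and the point $b$ where $G$ may vanish on $\T$ causes no trouble inside $\D$. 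I would write this out as a short lemma-free paragraph rather than a separate statement. Everything else — the algebra identifying $\tfrac1G$ with $\gamma$ in the cases $b=1$ and $b=0$, and the change of variables in the Volterra integral — is routine manipulation using the factorization $G=(\bar bz-1)(z-b)P$ recalled in the introduction and the stated structural hypotheses on $X$.
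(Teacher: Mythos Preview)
Your overall approach matches the paper's: invoke Theorem~\ref{subspace} to get $[\varphi_t',X]=\overline{\{f\in X:(Gf)'\in X\}}$, rewrite $(Gf)'\in X$ as $Gf\in V_{\mathrm{id}}(X)\oplus\C$, divide by $G$, and then specialize using the factorization of $G$. The paper's own proof is only a few lines and asserts directly that $(Gf)'\in X$ is equivalent to $f=\tfrac{1}{G}\int_0^z h + C$ with $h\in X$ and $C\in\C$, then takes closures; it does not address the ``twisted constant'' $c/G$ versus honest constant issue you raise. So on that point you have correctly identified a wrinkle that the paper simply glosses over, and your hand-waving there is no worse than the paper's.

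There is, however, a concrete error in your treatment of the $b=0$ case. You write $W_{1/G}k=-W_\gamma\bigl(\tfrac{1}{z}\int_0^{\,\cdot}k\bigr)$, but this is false: $W_{1/G}k(z)=\tfrac{1}{P(z)}\cdot\bigl(-\tfrac{1}{z}\int_0^z k\bigr)$ is a \emph{multiplication} by $1/P$, whereas $W_\gamma m(z)=\tfrac{1}{P(z)}\int_0^z m$ involves a further integration. Check $k\equiv 1$: the left side is $-1/P(z)$ while your right side is $-z/P(z)$. Consequently your claimed bijection and the conclusion $W_{1/G}(X)=W_\gamma(X)$ are not justified by the formula you give. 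The paper is equally terse here (one sentence: ``we will need only study $W_{1/P}$''), but if you want to fill this in rigorously you should argue at the level of the set $\{f\in X:(Gf)'\in X\}$ directly: with $G(z)=-zP(z)$ one has $(Gf)(0)=0$, so $(Gf)'\in X$ iff $zP(z)f(z)=\int_0^z h$ for some $h\in X$, i.e.\ $P(z)f(z)=\tfrac{1}{z}\int_0^z h$; now use the hypothesis $g\in X\Leftrightarrow \tfrac{g-g(0)}{z}\in X$ to move between this description and the condition $P f\in V_{\mathrm{id}}(X)\oplus\C$, rather than trying to match the images $W_{1/G}(X)$ and $W_\gamma(X)$ as sets via an explicit (and in your case incorrect) intertwining formula.
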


\begin{proof}
By Theorem \ref{subspace} we are interested in the closure of the set $\{f\in X:(Gf)'\in X\},$ that is, $f(z)=\frac{1}{G(z)}\int_0^zh(\zeta)d\zeta+C$ for some $h\in X$ and constant $C.$ Thus $$\{f\in X:(Gf)'\in X\}=X\cap(W_{\frac{1}{G}}(X)\oplus \mathbb{C}),$$ and we just only need to take closures.

Now, if the Denjoy-Wolff point of $\{\varphi_t\}$ is $b\in\mathbb{T},$ we can compose the semigroup by a rotation and assume $b=1.$ Then, by the representation of $G$ discussed in the introduction, $G(z)=(1-z)^2P(z).$ If $b\in\mathbb{D},$ without lack of generality we can assume $b=0$ and then $G(z)=~-zP(z),$ where $\text{Re}P(z)>0.$ If the space $X$ also satisfies that for each $b\in\mathbb{D},$ $f\in X\Leftrightarrow\frac{f(z)-f(b)}{z-b}\in X,$ then we will need only study $W_{\frac{1}{P}}$.
\end{proof}

In the next sections we will use Theorem \ref{subspace} and Proposition \ref{subspaceoperator} to characterize the semigroups of  weighted composition operators that are strongly continuous on the mixed norm spaces and the weighted Banach spaces. 

\section{Semigroups of weigthed composition operators on Mixed Norm spaces}

Suppose that the semigroup of analytic functions $\{\varphi_t\}$ induces a family of bounded weighted composition operators $\{T_t\}$ on $H(p,\infty,\alpha).$ By Proposition \ref{subspaceoperator}, we are interested on the boundedness of the operator $W_{g}=M_{g}V_{id}$ on $H(p,\infty,\alpha).$ Since it is the product of an integral operator and a multiplier, first we will need the following lemma on multipliers on $H(p,\infty,\alpha).$  

\begin{lemma}\label{multiplier} Let $g$ be an analytic function in the unit disk and $M_g$ the pointwise multiplier with symbol $g$. The following are equivalent:
\begin{enumerate}
\item[(a)] $M_g: H(p,\infty,\alpha-1)\to H(p,\infty,\alpha);$
\item[(b)] $M_g: H_0(p,\infty,\alpha-1)\to H_0(p,\infty,\alpha);$
\item[(c)] $g\in H(\infty,\infty,1).$
\end{enumerate}
\end{lemma}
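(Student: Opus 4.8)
The plan is to prove the cycle of implications (c) $\Rightarrow$ (a), (a) $\Rightarrow$ (b), and (b) $\Rightarrow$ (c), using the growth estimates and test functions from Proposition \ref{growth} together with the Hardy--Littlewood characterization of $H(p,\infty,\beta)$ in terms of integral means.

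First I would record the key reformulation: by definition, $f\in H(p,\infty,\beta)$ means $M_p(r,f)=O\big((1-r)^{-\beta}\big)$, and $g\in H(\infty,\infty,1)$ means $|g(z)|=O\big((1-|z|)^{-1}\big)$, i.e. $(1-|z|)|g(z)|$ is bounded on $\mathbb{D}$. For (c) $\Rightarrow$ (a): if $f\in H(p,\infty,\alpha-1)$ and $(1-|z|)|g(z)|\le K$, then the pointwise bound $|g(re^{i\theta})f(re^{i\theta})|\le \frac{K}{1-r}|f(re^{i\theta})|$ gives $M_p(r,gf)\le \frac{K}{1-r}M_p(r,f)\le \frac{CK}{(1-r)^{\alpha}}$, so $gf\in H(p,\infty,\alpha)$; boundedness of $M_g$ then follows from the closed graph theorem (or directly from this estimate, since $\|gf\|_{p,\infty,\alpha}\le CK\|f\|_{p,\infty,\alpha-1}$). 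The same estimate shows that if in addition $(1-r)^{\alpha-1}M_p(r,f)\to 0$ then $(1-r)^\alpha M_p(r,gf)\to 0$, which is exactly (c) $\Rightarrow$ (b); and trivially (a) $\Rightarrow$ (b) is not immediate, so it is cleaner to run the cycle (c) $\Rightarrow$ (a) $\Rightarrow$ (c) and (c) $\Rightarrow$ (b) $\Rightarrow$ (c) separately, or better: prove (c) $\Rightarrow$ (a), (a) $\Rightarrow$ (c), (c) $\Rightarrow$ (b), (b) $\Rightarrow$ (c).

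The substantive direction is (a) $\Rightarrow$ (c) (and symmetrically (b) $\Rightarrow$ (c)): here I would test $M_g$ against the functions $f_z$ from Proposition \ref{growth}, but adapted to the exponent $\alpha-1$, namely $f_z(w)=\dfrac{(1-|z|^2)^{\alpha-1+\frac1p}}{(1-\bar z w)^{2(\alpha-1+\frac1p)}}$, which satisfy $\|f_z\|_{p,\infty,\alpha-1}\approx 1$ and $|f_z(z)|=(1-|z|^2)^{-(\alpha-1+\frac1p)}$. Assuming $M_g$ is bounded $H(p,\infty,\alpha-1)\to H(p,\infty,\alpha)$ with norm $C$, the growth estimate in Proposition \ref{growth} applied in $H(p,\infty,\alpha)$ gives $|g(z)f_z(z)|\le \dfrac{C'\|gf_z\|_{p,\infty,\alpha}}{(1-|z|)^{\alpha+\frac1p}}\le \dfrac{C'C}{(1-|z|)^{\alpha+\frac1p}}$, and dividing by $|f_z(z)|=(1-|z|^2)^{-(\alpha-1+\frac1p)}$ yields $|g(z)|\le \dfrac{C''}{1-|z|}$, i.e. $g\in H(\infty,\infty,1)$. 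For (b) $\Rightarrow$ (c) one uses that the same $f_z$ lie in $H_0(p,\infty,\alpha-1)$ (Proposition \ref{growth} states exactly this) and $M_g f_z\in H_0(p,\infty,\alpha)$, so the growth estimate for $H(p,\infty,\alpha)$ still applies pointwise and the same computation goes through. I should also note, in the $\alpha-1\le 0$ edge case (when $\alpha\le 1$), that the hypotheses of Proposition \ref{growth} require the first parameter exponent $\alpha-1$ to be positive; this forces $\alpha>1$ for the lemma as stated, or one restricts attention to that range — I would flag this assumption explicitly.

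The main obstacle is the sharpness of the test-function argument in (a) $\Rightarrow$ (c): one needs the reproducing-type functions $f_z$ to simultaneously have controlled norm in $H(p,\infty,\alpha-1)$ and large enough value at $z$ to extract the full $(1-|z|)^{-1}$ growth of $g$, and this is precisely what Proposition \ref{growth} delivers, so the proof reduces to plugging in and bookkeeping the exponents $\alpha-1+\frac1p$ versus $\alpha+\frac1p$. The only genuine care needed is to make sure the two norm estimates (upper bound on $\|gf_z\|_{p,\infty,\alpha}$ from boundedness of $M_g$, lower bound on $|g(z)f_z(z)|$ from the growth lemma) are applied in the correct spaces and that the constants are independent of $z$; everything else is the elementary pointwise inequality $M_p(r,gf)\le \|(1-|\cdot|)g\|_\infty (1-r)^{-1}M_p(r,f)$.
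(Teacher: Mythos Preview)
Your proposal is correct and follows essentially the same route as the paper: the paper also proves $(c)\Rightarrow(a)$ and $(c)\Rightarrow(b)$ by the pointwise bound $M_p(r,gf)\le \|g\|_{\infty,\infty,1}(1-r)^{-1}M_p(r,f)$, and then obtains $(a)\Rightarrow(c)$ and $(b)\Rightarrow(c)$ by testing against the functions $f_z(w)=(1-|z|^2)^{\alpha-1+\frac1p}/(1-\bar z w)^{2(\alpha-1+\frac1p)}$ from Proposition~\ref{growth} and using the pointwise growth estimate there. Your observation about the implicit restriction $\alpha>1$ (so that $H(p,\infty,\alpha-1)$ fits the hypotheses of Proposition~\ref{growth}) is a fair caveat that the paper leaves unstated.
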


\begin{proof}
First, assume $g\in H(\infty,\infty,1).$ Therefore $$|g(z)|\leq\frac{\|g\|_{\infty,\infty,1}}{1-|z|}$$ for every $z\in\mathbb{D}.$ Let $f\in H(p,\infty,\alpha-1)$, then 
\begin{align*}
\|gf\|_{p,\infty,\alpha}&=\sup_{0\leq r<1}(1-r)^{\alpha}M_p(r,gf)\\&\leq \|g\|_{\infty,\infty,1}\sup_{0\leq r<1}\frac{(1-r)^{\alpha}}{1-r}M_p(r,f)= \|g\|_{\infty,\infty,1}\,\|f\|_{p,\infty,\alpha-1}.
\end{align*}
The same inequalities show that if $f\in H_0(p,\infty,\alpha-1)$ then $gf\in H_0(p,\infty,\alpha).$

On the other hand, suppose $gf\in H(p,\infty,\alpha)$ for every $f\in H(p,\infty,\alpha-1).$ This means that the operator $M_{g}$ is bounded from $H(p,\infty,\alpha-1)$ into $H(p,\infty,\alpha)$. Let us denote by $M$ the norm of this operator.  Then, by Proposition \ref{growth}, 
$$
|g(z)f(z)|\leq\frac{C\|gf\|_{p,\infty,\alpha}}{(1-|z|)^{\alpha+\frac{1}{p}}}\leq\frac{CM\|f\|_{p,\infty,\alpha-1}}{(1-|z|)^{\alpha+\frac{1}{p}}}.
$$ 
Choosing for every $z\in\mathbb{D}$ $f_z\in H(p,\infty,\alpha-1)$ as $$f_z(w)=\frac{(1-|z|^2)^{\alpha-1+\frac{1}{p}}}{(1-\overline{z}w)^{2(\alpha-1+\frac{1}{p})}},$$ a function that satisfies $|f_z(z)|=(1-|z|^2)^{-(\alpha-1+\frac{1}{p})}$ and $\|f_z\|_{p,\infty,\alpha-1}\approx1,$ we get, for every $z\in\mathbb{D},$ $$\frac{|g(z)|}{(1-|z|^2)^{\alpha-1+\frac{1}{p}}}=|g(z)f_z(z)|\lesssim\frac{CM}{(1-|z|)^{\alpha+\frac{1}{p}}}.$$ From here it is clear that $g\in H(\infty,\infty,1).$ The same argument shows that if $W_g$ is bounded from $H_0(p,\infty,\alpha-1)$ to $H_0(p,\infty,\alpha)$ then $g\in H(\infty,\infty,1).$
\end{proof}

Once we understand the behavior of the multiplier, we can prove the boundedness of the operator $W_g$ on the "big-Oh" space and from the "little-oh" space to itself or to the bigger space. The key in this proposition is the fact that, thanks to Lemma \ref{derivative}, the Volterra operator $V_g$ maps $H(p,\infty,\alpha)$ into $H(p,\infty,\alpha-1),$ and the previous lemma.

\begin{prop}\label{wgbounded} Let $g$ be an analytic function in the unit disk. The following are equivalent:
\begin{enumerate}
\item[(a)] $W_g: H(p,\infty,\alpha)\to H(p,\infty,\alpha);$
\item[(b)] $W_g: H_0(p,\infty,\alpha)\to H_0(p,\infty,\alpha);$
\item[(c)] $W_g: H_0(p,\infty,\alpha)\to H(p,\infty,\alpha);$
\item[(d)] $g\in H(\infty,\infty,1).$
\end{enumerate}
\end{prop}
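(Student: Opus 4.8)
The plan is to prove the cycle of implications $(d)\Rightarrow(a)$, $(d)\Rightarrow(b)$, $(a)\Rightarrow(c)$, $(c)\Rightarrow(d)$, which together with the trivial $(b)\Rightarrow(c)$ (since $H_0(p,\infty,\alpha)\subseteq H(p,\infty,\alpha)$) yields the full equivalence. The heart of the argument is the factorization $W_g=M_gV_{\mathrm{id}}$ already recorded in the excerpt, combined with Lemma \ref{derivative} and Lemma \ref{multiplier}. The point is that $V_{\mathrm{id}}$ raises the ``level'' $\alpha$ by one step down (it is essentially an antiderivative, and by Lemma \ref{derivative}\eqref{HLO} antidifferentiation sends $H(p,\infty,\alpha)$ into $H(p,\infty,\alpha-1)$), while $M_g$ with $g\in H(\infty,\infty,1)$ raises it back up by one, by Lemma \ref{multiplier}.

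For $(d)\Rightarrow(a)$: assume $g\in H(\infty,\infty,1)$ and take $f\in H(p,\infty,\alpha)$. I would first check that $F(z):=\int_0^z f(\zeta)\,d\zeta=V_{\mathrm{id}}f$ lies in $H(p,\infty,\alpha-1)$. Since $F'=f\in H(p,\infty,\alpha)$, Lemma \ref{derivative}\eqref{HLO} (read with $\alpha$ replaced by $\alpha-1$, i.e.\ $F\in H(p,\infty,\alpha-1)\Leftrightarrow F'\in H(p,\infty,\alpha)$) gives exactly $F\in H(p,\infty,\alpha-1)$, provided $\alpha-1>0$; the case $\alpha\le 1$ needs a separate remark, most cleanly handled by noting $H(p,\infty,\beta)\subseteq H(\infty,\infty,\beta+1/p)$-type growth estimates from Proposition \ref{growth} so that $F$ still satisfies the required bounded-means estimate — I expect this boundary case to be the one technical nuisance. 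Then $W_gf=gF=M_gF$, and Lemma \ref{multiplier}(a) (again with $\alpha$ shifted) says $M_g:H(p,\infty,\alpha-1)\to H(p,\infty,\alpha)$, so $W_gf\in H(p,\infty,\alpha)$, with a norm bound $\|W_gf\|_{p,\infty,\alpha}\le \|g\|_{\infty,\infty,1}\|F\|_{p,\infty,\alpha-1}\lesssim \|g\|_{\infty,\infty,1}\|f\|_{p,\infty,\alpha}$. The implication $(d)\Rightarrow(b)$ is the identical computation run with the ``little-oh'' versions: Lemma \ref{derivative}\eqref{HLo} puts $F\in H_0(p,\infty,\alpha-1)$ and Lemma \ref{multiplier}(b) puts $gF\in H_0(p,\infty,\alpha)$.

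The implication $(a)\Rightarrow(c)$ is immediate from the inclusion $H_0(p,\infty,\alpha)\subseteq H(p,\infty,\alpha)$: if $W_g$ is bounded on the big space it is in particular bounded on the subspace into the big space. So the only remaining work is $(c)\Rightarrow(d)$, which is the converse and where one must extract information about $g$ from boundedness of $W_g$. Here I would imitate the second half of the proof of Lemma \ref{multiplier}: use the test functions $f_z\in H_0(p,\infty,\alpha)$ from Proposition \ref{growth} with $\|f_z\|_{p,\infty,\alpha}\approx1$ and $|f_z(z)|=(1-|z|^2)^{-(\alpha+1/p)}$, but now one cannot evaluate $W_gf_z$ directly at $z$ because of the integral. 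The trick is to differentiate: $(W_gf_z)'=g'\!\int_0^z f_z + g f_z$, which is awkward; instead I would choose a cleaner family, e.g.\ slightly modified test functions whose antiderivative is explicitly controlled, or use the standard device of plugging $f=f_z$ and estimating $\int_0^z f_z(\zeta)\,d\zeta$ from below near $\zeta=z$ to recover $|g(z)|\lesssim (1-|z|)^{-1}$, i.e.\ $g\in H(\infty,\infty,1)$. An alternative, perhaps smoother route: from $(c)$ together with $(d)\Rightarrow(a)$ applied in reverse — namely compose with differentiation. Since $W_g=M_gV_{\mathrm{id}}$ and $D=\frac{d}{dz}$ is bounded $H(p,\infty,\alpha)\to H(p,\infty,\alpha+1)$ and $H_0\to H_0$ by Lemma \ref{derivative}, one can consider $DW_g$ acting on suitable functions to peel off the integral: applying $W_g$ to $f'$ for $f$ ranging over a dense enough set, or noting $(W_gf)(z)/\!\int_0^z f$ relationships. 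The cleanest is probably: boundedness of $W_g:H_0(p,\infty,\alpha)\to H(p,\infty,\alpha)$ forces, for the test functions $f_z$, the bound $|g(z)|\,\bigl|\int_0^z f_z\bigr|\lesssim (1-|z|)^{-(\alpha+1/p)}$, and a direct estimate of $\int_0^z f_z$ (integrating the explicit kernel along the radius to $z$) gives $\bigl|\int_0^z f_z\bigr|\gtrsim (1-|z|^2)^{-(\alpha-1+1/p)}$ for $|z|$ close to $1$, whence $|g(z)|\lesssim (1-|z|)^{-1}$, i.e.\ $g\in H(\infty,\infty,1)$. I expect this lower estimate on $\int_0^z f_z$ — making sure the integral of the complex kernel doesn't suffer cancellation — to be the main obstacle, and it is resolved exactly as in the classical Bergman/Hardy Volterra arguments of Aleman–Siskakis, by integrating along the segment $[0,z]$ where the integrand keeps a definite argument.

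Finally, I would assemble the four implications and note $(b)\Rightarrow(c)$ to close the loop, remarking as in Lemma \ref{multiplier} that the boundary regime $0<\alpha\le 1$ is covered because Lemma \ref{derivative} is stated for all $\alpha>0$ and the growth estimates of Proposition \ref{growth} remain valid there; this makes the whole chain uniform in $\alpha>0$.
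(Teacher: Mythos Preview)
Your implications $(d)\Rightarrow(a)$, $(d)\Rightarrow(b)$, $(a)\Rightarrow(c)$, $(b)\Rightarrow(c)$ match the paper exactly, via the factorization $W_g=M_gV_{\mathrm{id}}$ together with Lemma~\ref{derivative} and Lemma~\ref{multiplier}.

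For $(c)\Rightarrow(d)$ the paper does \emph{not} use your primary route (test functions $f_z$ plus a lower bound on $\bigl|\int_0^z f_z\bigr|$); it takes precisely the ``alternative, perhaps smoother route'' you mention but do not commit to. The observation is simply that $W_g\circ D=M_g$ (up to the harmless correction $f(0)g$), so boundedness of $W_g:H_0(p,\infty,\alpha)\to H(p,\infty,\alpha)$ together with $D:H_0(p,\infty,\alpha-1)\to H_0(p,\infty,\alpha)$ from Lemma~\ref{derivative}\eqref{HLo} yields $M_g:H_0(p,\infty,\alpha-1)\to H(p,\infty,\alpha)$ bounded, and then Lemma~\ref{multiplier} gives $g\in H(\infty,\infty,1)$ directly. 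Note the order: it is $W_g\circ D$, not $D\circ W_g$ as you wrote. This precomposition trick sidesteps entirely the cancellation worry you flag for the radial integral of $f_z$; your test-function argument would go through, but the paper's device is one line and reuses Lemma~\ref{multiplier} verbatim rather than reproving a pointwise lower bound.

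Finally, the paper does not single out the range $0<\alpha\le1$ for special treatment; it invokes Lemma~\ref{derivative} uniformly, so your worry there is more cautious than the paper itself.
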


\begin{proof}
Recall that $W_{g}=M_{g}V_{id}.$ By Lemma \ref{derivative}, it is clear that $$V_{id}:H(p,\infty,\alpha)\to H(p,\infty,\alpha-1).$$ Moreover, by the previous Lemma, if $g\in H(\infty,\infty,1)$ then $$M_{g}:H(p,\infty,\alpha-1)\to H(p,\infty,\alpha)$$ and $$M_{g}:H_0(p,\infty,\alpha-1)\to H_0(p,\infty,\alpha),$$ and from here $W_g$ is bounded on $H(p,\infty,\alpha)$ and on $H_0(p,\infty,\alpha).$ This proves $(d)\Rightarrow (a)$ and $(d)\Rightarrow (b).$ Since $(a)\Rightarrow (c)$ and $(b)\Rightarrow (c)$ are clear, we only need to prove $(c)\Rightarrow (d).$

Now, suppose $W_{g}$ is a bounded operator from $H_0(p,\infty,\alpha)$ to $H(p,\infty,\alpha).$ Since the differentiation operator $D$ is bounded from $H_0(p,\infty,\alpha - 1)$ to $H_0(p,\infty,\alpha)$ by Lemma \ref{derivative}, then the operator $M_g = W_g\circ D$ is bounded from $H_0(p,\infty,\alpha - 1)$ to $H(p,\infty,\alpha).$ By Lemma \ref{multiplier}, this means $g\in H(\infty,\infty, 1).$
%Let $f_z\in H_0(p,\infty,\alpha-1)$ be a function satisfying $|f_z(z)|=(1-|z|^2)^{-(\alpha-1+\frac{1}{p})}$ and $\|f_z\|_{p,\infty,\alpha-1}\approx1,$ then its derivative belongs to $H_0(p,\infty,\alpha)$ by Lemma \ref{derivative}. It is also easy to see that $\|f_z'\|_{p,\infty,\alpha}\lesssim1.$ Then, for any $w\in\mathbb{D},$ $$W_g f_z'(w) = g(w)V_{id}f_z'(w)=g(w)\int_0^w f_z'(\zeta)\,d\zeta=g(w)f_z(w),$$ and $|W_gf_z'(z)|\lesssim\|W_g\|(1-|z|^2)^{-(\alpha+\frac{1}{p})},$ again by Proposition \ref{growth}.

%Let $f_0\in~H_0(p,\infty,\alpha)$ be such that $$V_{id}f_0(w)=\int_0^w f_0(\zeta)\,d\zeta=f_z(w),$$ where $f_z\in H_0(p,\infty,\alpha-1)$ is a function satisfying $|f_z(z)|=(1-|z|^2)^{-(\alpha-1+\frac{1}{p})}$ and $\|f_z\|_{p,\infty,\alpha-1}\approx1.$ We know that this function exists and that $\|f_0\|_{p,\infty,\alpha}\approx\|f_z\|_{p,\infty,\alpha-1}$ because of Lemma \ref{derivative}.

%Hence, as in the proof of Lemma~\ref{multiplier},$$\frac{|g(z)|}{(1-|z|)^{\alpha-1+\frac{1}{p}}}=|g(z)||f_z(z)|=|W_gf_z'(z)|\lesssim\frac{\|W_g\|}{(1-|z|)^{\alpha+\frac{1}{p}}},$$ and therefore $g\in H(\infty,\infty,1).$
\end{proof}

The most useful result to study the semigroups of weighted composition operators will be the next one, that characterizes the boundedness of the operator from the bigger space $H(p,\infty,\alpha)$ to $H_0(p,\infty,\alpha).$ We also find that it is equivalent to the compactness and weakly compactness on $H(p,\infty,\alpha).$

\begin{thm}\label{wgcompact} Let $g\in H(\infty,\infty,1).$ The following statements are equivalent:
\begin{enumerate}
\item[(a)] $W_g: H(p,\infty,\alpha)\to H(p,\infty,\alpha)$ is compact;
\item[(b)] $W_g: H(p,\infty,\alpha)\to H(p,\infty,\alpha)$ is weakly compact;
%\item[(c)] $W_g: H_0(p,\infty,\alpha)\to H_0(p,\infty,\alpha)$ is compact;
%\item[(d)] $W_g: H_0(p,\infty,\alpha)\to H_0(p,\infty,\alpha)$ is weakly compact;
\item[(c)] $W_g: H(p,\infty,\alpha)\to H_0(p,\infty,\alpha);$
%\item[(f)] $W_g: H(p,\infty,\alpha)\to H(p,\infty,\alpha)$ does not fix a copy of $l_\infty$;
%\item[(f)] $W_g: H_0(p,\infty,\alpha)\to H_0(p,\infty,\alpha)$ does not fix a copy of $c_0$;
\item[(d)] $g\in H_0(\infty,\infty,1).$
\end{enumerate}

\end{thm}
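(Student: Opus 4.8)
The plan is to establish the chain of implications $(d)\Rightarrow(a)\Rightarrow(b)\Rightarrow(c)\Rightarrow(d)$, modelled on the structure of Proposition \ref{wgbounded} but upgrading each boundedness statement to its compactness counterpart. The implication $(a)\Rightarrow(b)$ is trivial, since every compact operator is weakly compact. The implication $(b)\Rightarrow(c)$ will use the duality facts recorded in Theorem \ref{dualmixed}: since $H_0(p,\infty,\alpha)^{**}=H(p,\infty,\alpha)$, the space $H(p,\infty,\alpha)$ is the bidual of a (non-reflexive) Banach space, and a weakly compact operator $W_g$ on $X=H(p,\infty,\alpha)$ maps $X$ into the canonical image of $H_0(p,\infty,\alpha)$ inside $X^{**}=X$; this is the standard fact that a weakly compact operator from a bidual into itself factors through the original space (a consequence of Gantmacher's theorem together with the characterization of weak compactness via the bidual). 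Identifying that canonical image with $H_0(p,\infty,\alpha)\subseteq H(p,\infty,\alpha)$ gives exactly $W_g(H(p,\infty,\alpha))\subseteq H_0(p,\infty,\alpha)$.

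For $(c)\Rightarrow(d)$ I would argue as in Proposition \ref{wgbounded}: by Lemma \ref{derivative} the differentiation operator $D$ maps $H(p,\infty,\alpha-1)$ into $H(p,\infty,\alpha)$, so $M_g=W_g\circ D$ maps $H(p,\infty,\alpha-1)$ into $H_0(p,\infty,\alpha)$. Then I would test against the normalized reproducing-type functions $f_z(w)=(1-|z|^2)^{\alpha-1+1/p}(1-\bar z w)^{-2(\alpha-1+1/p)}$ from Proposition \ref{growth}, which have $\|f_z\|_{p,\infty,\alpha-1}\approx 1$ and $|f_z(z)|=(1-|z|^2)^{-(\alpha-1+1/p)}$. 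Since $M_g f_z\in H_0(p,\infty,\alpha)$, evaluating the little-oh condition at the point $z$ and using the pointwise growth estimate of Proposition \ref{growth} from below (i.e. $|g(z)f_z(z)|\le C\|gf_z\|_{p,\infty,\alpha}(1-|z|)^{-(\alpha+1/p)}$ combined with the fact that $(1-|z|)^\alpha M_p(r,gf_z)\to 0$) yields $(1-|z|)|g(z)|\to 0$ as $|z|\to 1$, i.e. $g\in H_0(\infty,\infty,1)$.

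The main work is $(d)\Rightarrow(a)$. Here I would factor $W_g=M_g\circ V_{id}$ and show that when $g\in H_0(\infty,\infty,1)$ the multiplier $M_g\colon H(p,\infty,\alpha-1)\to H(p,\infty,\alpha)$ is itself compact, which suffices since $V_{id}\colon H(p,\infty,\alpha)\to H(p,\infty,\alpha-1)$ is bounded by Lemma \ref{derivative}. To prove compactness of $M_g$, take a bounded sequence $(f_n)$ in $H(p,\infty,\alpha-1)$; by the pointwise growth estimate of Proposition \ref{growth} and Montel's theorem, a subsequence converges uniformly on compact subsets to some $f\in H(p,\infty,\alpha-1)$. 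Split the supremum defining $\|g f_n - gf\|_{p,\infty,\alpha}$ into the region $r\le r_0$, where uniform convergence on $\{|z|\le r_0\}$ handles it, and the region $r_0<r<1$, where one writes $(1-r)^\alpha M_p(r,g(f_n-f))\le \big(\sup_{|z|\ge r_0}(1-|z|)|g(z)|\big)\cdot (1-r)^{\alpha-1}M_p(r,f_n-f)$ and uses that $\sup_{|z|\ge r_0}(1-|z|)|g(z)|\to 0$ as $r_0\to 1$ (this is precisely $g\in H_0(\infty,\infty,1)$) together with the uniform bound $\sup_n\|f_n\|_{p,\infty,\alpha-1}<\infty$. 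A standard $\varepsilon/3$ argument then gives $\|gf_n-gf\|_{p,\infty,\alpha}\to 0$, so $M_g$ is compact, hence so is $W_g$.

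The step I expect to be the most delicate is $(b)\Rightarrow(c)$: one must be careful about which identification of $H_0(p,\infty,\alpha)^{**}$ with $H(p,\infty,\alpha)$ is being used and that the canonical embedding $H_0(p,\infty,\alpha)\hookrightarrow H_0(p,\infty,\alpha)^{**}$ corresponds under this identification to the literal inclusion $H_0(p,\infty,\alpha)\subseteq H(p,\infty,\alpha)$ of function spaces; verifying this compatibility (and checking that $W_g$ on $H(p,\infty,\alpha)$ is the bitranspose of $W_g$ on $H_0(p,\infty,\alpha)$, which requires $W_g$ to be bounded on $H_0(p,\infty,\alpha)$ — guaranteed here by Proposition \ref{wgbounded} since $g\in H(\infty,\infty,1)$) is where the argument needs the most care. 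Alternatively, if that route proves awkward, one can replace $(b)\Rightarrow(c)$ by a direct argument: assume $W_g$ weakly compact but $g\notin H_0(\infty,\infty,1)$, produce a sequence $z_n\to\partial\D$ with $(1-|z_n|)|g(z_n)|\ge\delta>0$, apply $W_g\circ D$ to the normalized test functions $f_{z_n}$, and show the images form a sequence with no weakly convergent subsequence (using that weak convergence in $H(p,\infty,\alpha)$ forces pointwise convergence while the images stay large at $z_n$), contradicting weak compactness.
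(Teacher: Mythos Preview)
Your chain $(d)\Rightarrow(a)\Rightarrow(b)\Rightarrow(c)$ is essentially the paper's argument: the compactness proof via Montel and a near/far split is the same idea the paper carries out (the paper works directly with $W_g$ rather than first proving $M_g$ compact, but the computation is identical), and your $(b)\Leftrightarrow(c)$ via $H_0(p,\infty,\alpha)^{**}=H(p,\infty,\alpha)$ and the bidual criterion is exactly Theorem \ref{DunSch} as used in the paper. Your extra care in identifying $W_g$ on $H(p,\infty,\alpha)$ with the bitranspose of its restriction to $H_0(p,\infty,\alpha)$ is well placed; the paper glosses over this.

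The genuine gap is in your $(c)\Rightarrow(d)$. You propose to test $M_g=W_g\circ D$ against the family $f_z$ and ``evaluate the little-oh condition at the point $z$''. But membership $gf_z\in H_0(p,\infty,\alpha)$ says $(1-r)^\alpha M_p(r,gf_z)\to 0$ as $r\to 1$ for each \emph{fixed} $z$; it gives no control of $(1-|z|)^\alpha M_p(|z|,gf_z)$ as $|z|\to 1$, because the function $gf_z$ changes with $z$. The pointwise estimate you quote, $|g(z)f_z(z)|\le C\|gf_z\|_{p,\infty,\alpha}(1-|z|)^{-(\alpha+1/p)}$, only recovers $(1-|z|)|g(z)|\le C$, i.e.\ $g\in H(\infty,\infty,1)$, which is the hypothesis, not the conclusion. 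Boundedness of $M_g$ into $H_0(p,\infty,\alpha)$ does not give any uniform little-oh over the image of the unit ball.

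The paper avoids this by testing against a \emph{single} function $f(w)=(1-w)^{-(\alpha-1+1/p)}\in H(p,\infty,\alpha-1)\setminus H_0(p,\infty,\alpha-1)$. Then $gf\in H_0(p,\infty,\alpha)$ is one little-oh statement, and evaluating it along a sequence $|z_n|\to 1$ with $(1-|z_n|)|g(z_n)|\ge C>0$ gives a contradiction via
\[
(1-|z_n|)^\alpha M_p(|z_n|,gf)\gtrsim (1-|z_n|)^{\alpha-1}\cdot\frac{C}{\,1-|z_n|\,}\cdot\ \cdot\ \approx C.
\]
(More precisely, one uses $M_p(r,gf)\ge |g(r)|\,M_p(r,f)\approx |g(r)|(1-r)^{-(\alpha-1)}$ along radii where $|g|$ is large, or the lower bound via the explicit $M_p$ of $(1-w)^{-\beta}$.) Replacing your family $f_z$ by this single test function closes the gap; alternatively, your fallback contradiction argument for $(b)\Rightarrow(d)$ using the $f_{z_n}$ and compactness (not merely membership in $H_0$) does work, since that is exactly the paper's $(a)\Rightarrow(d)$ step.
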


To prove this theorem we will need the following result, that can be found in \cite[p. 482]{DunSch}.

\begin{thm}\label{DunSch}
Let $T:X\to Y$ be a bounded linear operator between two Banach spaces $X$ and $Y.$ Then, $T$ is weakly compact if and only if $T^{**}(X^{**})\subset Y.$
\end{thm}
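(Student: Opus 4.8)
The statement is Gantmacher's theorem, and the plan is to deduce it from three standard facts about the bidual: Banach--Alaoglu, Goldstine's density theorem, and the weak-$*$-to-weak-$*$ continuity of a second adjoint. Throughout I write $J_X\colon X\to X^{**}$ and $J_Y\colon Y\to Y^{**}$ for the canonical isometric embeddings, identifying $X$ and $Y$ with their images when convenient. The one computation I would record first is the intertwining identity $T^{**}J_X=J_Y T$, immediate from the definition of the adjoints: for $x\in X$ and $y^*\in Y^*$ one has $\langle T^{**}J_X x, y^*\rangle = \langle J_X x, T^* y^*\rangle = \langle x, T^*y^*\rangle = \langle Tx, y^*\rangle = \langle J_Y Tx, y^*\rangle$. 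I would also isolate two topological remarks used repeatedly: since $T^{**}=(T^*)^*$ is an adjoint, it is continuous from $(X^{**},\sigma(X^{**},X^*))$ to $(Y^{**},\sigma(Y^{**},Y^*))$; and the weak-$*$ topology $\sigma(Y^{**},Y^*)$ restricted to the subspace $J_Y(Y)$ coincides with the weak topology $\sigma(Y,Y^*)$ of $Y$.

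For the implication ``$T^{**}(X^{**})\subset Y$ $\Rightarrow$ $T$ weakly compact'', the plan is to produce a single weakly compact subset of $Y$ containing $T(B_X)$. By Banach--Alaoglu the ball $B_{X^{**}}$ is weak-$*$ compact, so by the weak-$*$-to-weak-$*$ continuity of $T^{**}$ the image $T^{**}(B_{X^{**}})$ is weak-$*$ compact in $Y^{**}$. The hypothesis places this set inside $J_Y(Y)$, where, by the second topological remark, the ambient weak-$*$ topology is exactly the weak topology of $Y$; hence $J_Y^{-1}\big(T^{**}(B_{X^{**}})\big)$ is weakly compact in $Y$. Finally the intertwining identity gives $J_Y(T(B_X))=T^{**}(J_X(B_X))\subset T^{**}(B_{X^{**}})$, so $T(B_X)$ lies in a weakly compact subset of $Y$ and is therefore relatively weakly compact; that is, $T$ is weakly compact.

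For the converse ``$T$ weakly compact $\Rightarrow$ $T^{**}(X^{**})\subset Y$'', by homogeneity it suffices to treat $x^{**}\in B_{X^{**}}$. By Goldstine's theorem choose a net $(x_\alpha)\subset B_X$ with $J_X x_\alpha\to x^{**}$ in $\sigma(X^{**},X^*)$; applying $T^{**}$ and the intertwining identity yields $J_Y(Tx_\alpha)=T^{**}J_X x_\alpha\to T^{**}x^{**}$ weak-$*$ in $Y^{**}$. On the other hand $Tx_\alpha\in T(B_X)$, a relatively weakly compact set by hypothesis, so the net $(Tx_\alpha)$ lies in a weakly compact subset of $Y$ and admits a subnet $(Tx_\beta)$ converging weakly to some $y\in Y$; then $J_Y(Tx_\beta)\to J_Y y$ weak-$*$ in $Y^{**}$. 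Since a subnet of a weak-$*$ convergent net has the same limit and the weak-$*$ topology of $Y^{**}$ is Hausdorff, the two limits agree: $T^{**}x^{**}=J_Y y\in J_Y(Y)$. Thus $T^{**}(B_{X^{**}})\subset J_Y(Y)$, and scaling gives $T^{**}(X^{**})\subset Y$.

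The result is classical, so the genuine work is bookkeeping rather than a new idea: the one real pitfall is to keep the weak and weak-$*$ topologies strictly separate and to invoke each on the correct space. In particular the argument leans on the identity $\sigma(Y^{**},Y^*)|_{J_Y(Y)}=\sigma(Y,Y^*)$, so that ``weak-$*$ compact and contained in $J_Y(Y)$'' upgrades to ``weakly compact in $Y$'', and on the availability of weakly convergent subnets from relative weak compactness in the converse direction; working with nets rather than sequences avoids any appeal to Eberlein--Šmulian and is entirely sufficient here.
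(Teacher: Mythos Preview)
Your proof is correct and is the standard argument for Gantmacher's theorem; the bookkeeping with $J_X$, $J_Y$, the weak-$*$-to-weak-$*$ continuity of $T^{**}$, Banach--Alaoglu, and Goldstine is all in order, and your care in distinguishing the weak and weak-$*$ topologies on $Y^{**}$ versus $Y$ is exactly what the argument requires.

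As for comparison with the paper: there is nothing to compare. The paper does not prove this statement at all; it simply quotes it as a known result from Dunford--Schwartz (\cite[p.~482]{DunSch}) and then applies it in the proof of Theorem~\ref{wgcompact}. So you have supplied a complete, self-contained proof of a result the authors chose to cite rather than prove.
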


\begin{proof}[Proof (Theorem \ref{wgcompact})]
We first prove that $(a)\Leftrightarrow(d).$ Let $g\in H_0(\infty,\infty,1)$ and $\{f_n\}$ a sequence in the unit ball of $H(p,\infty,\alpha)$ that converges uniformly on compact subsets of the unit disk. Therefore, for fixed $\varepsilon>0$ there is $R<1$ such that $|g(z)|(1-|z|)<\varepsilon/\|V_{id}\|$ for $|z|\geq R$. Moreover, if $r\leq R,$ note that $(1-r)^{\alpha-1} \leq 1$ if $\alpha\geq 1$ and $(1-r)^{\alpha-1}\leq (1-R)^{\alpha-1}$ if $\alpha < 1.$ Since $f_n\to 0$ uniformly on compact subsets, there is $N_0\in \N$ such that $$|f_n(z)|\leq \frac{\varepsilon}{((1-R)^{\alpha-1}+1)||g||_{\infty,\infty,1}}$$ for $n\geq N_0$ and for all $|z|\leq R$. Then, for $r<R,$

\begin{align*} M_p(r,V_{id}f_n)&\leq \left(\int_0^{2\pi}\left(\int_0^{r}|f_n(\zeta)|d\zeta\right)^p\frac{d\theta}{2\pi}\right)^{1/p}\leq \frac{\varepsilon r}{((1-R)^{\alpha-1}+1)||g||_{\infty,\infty,1}}.
\end{align*}

From here,
\begin{align*}
(1-r)^{\alpha}M_p(r,gV_{id}f_n)&\leq(1-r)^{\alpha}\sup_{\theta\in [0,2\pi]}|g(re^{i\theta})|M_p(r,V_{id}f_n)\\&\leq ||g||_{\infty,\infty,1} (1-r)^{\alpha-1}M_p(r,V_{id}f_n)\leq \frac{\varepsilon r(1-r)^{\alpha-1}}{((1-R)^{\alpha-1}+1)}\leq \varepsilon
\end{align*}
for $r<R.$

On the other hand, if $r>R$, then 
\begin{align*}
(1-r)^{\alpha}M_p(r,gV_{id}f_n)&\leq(1-r)^{\alpha}\sup_{\theta\in [0,2\pi]}|g(re^{i\theta})|M_p(r,V_{id}f_n)\\
&\leq ||V_{id}f_n||_{p,\infty, \alpha-1} (1-r)\sup_{\theta\in [0,2\pi]}|g(re^{i\theta})|\leq \varepsilon,
\end{align*}
since the operator $V_{id}$ is bounded. Thus, $\lim ||W_gf_n||_{p,\infty, \alpha}=0$ and the operator is compact.

Now we assume $W_g$ is compact on $H(p,\infty,\alpha).$ Since the differentiation operator $D$ is bounded from $H(p,\infty, \alpha-1)$ to $H(p,\infty,\alpha)$ by Lemma \ref{derivative}, we have that the multiplier $$M_g=W_g\circ D:H(p,\infty,\alpha-1)\to H(p,\infty,\alpha)$$ is compact. Therefore, if $\{f_n\}$ is a sequence in the unit ball of $H(p,\infty,\alpha-1)$ such that $f_n\to0$ as $n\to\infty$ uniformly on compact subsets of $\mathbb{D}$, then $\|M_gf_n\|_{p,\infty,\alpha}\to 0$ as $n\to\infty.$

Suppose that $g\not\in H_0(\infty,\infty,1),$ then there exist a sequence $\{z_n\}$ and a constant $C>0$ such that $|z_n|\to 1$ as $n\to\infty,$ and $$|g(z_n)|\geq \frac{C}{1-|z_n|}.$$ 

Given the sequence $\{z_n\}$ we will take the functions $\{f_{z_n}\}$, where $f_{z_n}\in H(p,\infty,\alpha-1)$ are the functions in Prop. \ref{growth}, $$f_{z_n}(w)=\frac{(1-|z_n|^2)^{\alpha+\frac{1}{p}}}{(1-\overline{z}_nw)^{2(\alpha+\frac{1}{p})}},$$ $n\in\mathbb{N},$ $w\in\mathbb{D}.$ These functions satisfy $|f_{z_n}(z_n)|=(1-|z_n|^2)^{-(\alpha-1+\frac{1}{p})}$ and $\|f_{z_n}\|_{p,\infty,\alpha-1}\approx1.$ Moreover, $f_{z_n}\to 0$ uniformly on compact subsets of the disk as $n\to\infty$ and thus, since $M_g$ is compact, $\|M_gf_{z_n}\|\to 0$ as $n\to\infty.$ Nevertheless, $$\frac{\|M_gf_{z_n}\|}{(1-|z_n|)^{\alpha+\frac{1}{p}}}\geq|g(z_n)||f_{z_n}(z_n)|\geq\frac{C|f_{z_n}(z_n)|}{1-|z_n|}=\frac{C}{(1-|z_n|)^{\alpha+\frac{1}{p}}},$$ that is, $$\|M_gf_{z_n}\|\geq C>0,$$ getting a contradiction.

Suppose now that $W_g: H(p,\infty,\alpha)\to H_0(p,\infty,\alpha),$ or equivalently, $W_g^{**}: H^{**}(p,\infty,\alpha)\to H^{**}_0(p,\infty,\alpha).$ By Theorem \ref{dualmixed}, $H^{**}_0(p,\infty,\alpha)=H(p,\infty,\alpha),$ and therefore, $W_g^{**}: H^{**}(p,\infty,\alpha)\to H(p,\infty,\alpha).$ By Theorem \ref{DunSch}, this is equivalent to $W_g: H(p,\infty,\alpha)\to H(p,\infty,\alpha)$ being weakly compact. Therefore we have proved $(c)\Leftrightarrow(b).$

%Since $W_g$ is bounded on $H(p,\infty,\alpha)$ then, by Proposition \ref{growth}, $$\frac{\|W_gh_n\|}{(1-|z_n|)^{\alpha+\frac{1}{p}}}\geq |g(z_n)||V_{id}h_n(z_n)|\geq\frac{C|V_{id}h_n(z_n)|}{1-|z_n|}.$$ Now, let $\{h_n\}$ be like in the proof of Proposition \ref{wgbounded}, $V_{id}h_n=f_{z_n}$ for every $n\in\mathbb{N},$ where $f_{z_n}\in H(p,\infty,\alpha-1)$ satisfies $|f_{z_n}(z_n)|=(1-|z_n|^2)^{-(\alpha-1+\frac{1}{p})}$ and $\|f_{z_n}\|_{p,\infty,\alpha-1}\approx1.$ Then $$\frac{\|W_gh_n\|}{(1-|z_n|)^{\alpha+\frac{1}{p}}}\geq\frac{C|V_{id}h_n(z_n)|}{1-|z_n|}\geq\frac{C}{(1-|z_n|)^{\alpha+\frac{1}{p}}},$$ that is, $$\|W_gh_n\|\geq C>0,$$ that is a contradiction.

Now we see $(d)\Rightarrow(c).$ Let $g\in H_0(\infty,\infty,1)$ and $f\in H(p,\infty,\alpha),$ then, since $W_g=M_g\circ V_{id}$ and $V_{id}$ is bounded from $H(p,\infty,\alpha)$ to $H(p,\infty,\alpha-1)$, we have that
 \begin{align*}
\lim_{r\to 1}(1-r)^\alpha M_p(r,gV_{id}f)&\leq \lim_{r\to1}\,(1-r)^\alpha\sup_{\theta\in [0,2\pi]}|g(re^{i\theta})|\,M_p(r,V_{id}f)\\&\leq \|V_{id}f\|_{p,\infty, \alpha-1}\lim_{r\to1}\,(1-r)\sup_{\theta\in [0,2\pi]}|g(re^{i\theta})|=0.
\end{align*}
Therefore, $W_g$ is bounded from $H(p,\infty,\alpha)$ to $H_0(p,\infty,\alpha).$

Finally, to prove $(c)\Rightarrow (d),$ suppose that $W_g$ is bounded from $H(p,\infty,\alpha)$ to $H_0(p,\infty,\alpha).$ Notice that, since the differentiation operator $D$ is bounded from $H(p,\infty,\alpha - 1)$ to $H(p,\infty,\alpha)$ by Lemma \ref{derivative}, then the operator $M_g = W_g\circ D$ is bounded from $H(p,\infty,\alpha - 1)$ to $H_0(p,\infty,\alpha).$ Now, suppose $g\not\in H_0(\infty,\infty,1),$ then there exist a sequence $\{z_n\}$ and a constant $C>0$ such that $|z_n|\to 1$ and $$|g(z_n)|\geq \frac{C}{1-|z_n|}.$$ Let $$f(z)=\frac{1}{(1-z)^{\alpha+\frac{1}{p}-1}},$$ $z\in\mathbb{D},$ then $f\in H(p,\infty,\alpha - 1)\backslash H_0(p,\infty,\alpha -1).$ Then
$$(1-|z_n|)^\alpha M_p(|z_n|,gf)\geq (1-|z_n|)^\alpha\frac{C}{1-|z_n|}M_p(|z_n|,f)\approx \frac{C (1-|z_n|)^{\alpha-1}}{(1-|z_n|)^{\alpha-1}}= C.$$
Therefore, $M_g$ is not bounded from $H(p,\infty,\alpha - 1)$ to $H_0(p,\infty,\alpha)$ if $g\not\in H_0(\infty,\infty,1).$

\end{proof}

Once we have studied the boundedness of the operator $W_g,$ we come back to semigroups. Since $[\varphi_t',H_0(p,\infty,\alpha)]=H_0(p,\infty,\alpha)$ we have that $$H_0(p,\infty,\alpha)\subseteq [\varphi_t',H(p,\infty,\alpha)]\subseteq H(p,\infty,\alpha)$$ and we want to know what properties must the semigroup $\{\varphi_t\}$ satisfy in order to have the equalities. 

First we deal with $$[\varphi_t',H(p,\infty,\alpha)]\subseteq H(p,\infty,\alpha).$$

\begin{thm}
No nontrivial semigroup of analytic functions induces a strongly continuous semigroup of weighted composition operators on $H(p,\infty,\alpha).$ In other words, $$[\varphi_t',H(p,\infty,\alpha)]\subsetneq H(p,\infty,\alpha).$$
\end{thm}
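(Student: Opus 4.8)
\emph{Proof proposal.}

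The plan is to use the description
$[\varphi_t',H(p,\infty,\alpha)]=\overline{\{f\in H(p,\infty,\alpha):(Gf)'\in H(p,\infty,\alpha)\}}$
furnished by Theorem \ref{subspace}, and, for an arbitrary nontrivial semigroup $\{\varphi_t\}$, to exhibit a single $f^{*}\in H(p,\infty,\alpha)$ lying outside this closed subspace. (If $\sup_{0\le t\le 1}\|T_t\|=\infty$ the semigroup is not even locally bounded, hence not strongly continuous, so there is nothing to prove; thus we may assume $\sup_{0\le t\le 1}\|T_t\|<\infty$ and invoke Theorem \ref{subspace}.) The obstruction will be detected along one boundary direction $\zeta_0\in\T$: I would set $N(\phi):=\limsup_{r\to 1^-}(1-r)^{\alpha+\frac1p}|\phi(r\zeta_0)|$. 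By the growth estimate in Proposition \ref{growth}, $N(\phi)\le C\|\phi\|_{p,\infty,\alpha}$; since $N$ is moreover subadditive and absolutely homogeneous, it is Lipschitz on $H(p,\infty,\alpha)$, so if $N$ vanishes on a set it vanishes on its closure. Hence it suffices to check that $N$ vanishes on $\{f\in H(p,\infty,\alpha):(Gf)'\in H(p,\infty,\alpha)\}$ while $N(f^{*})\neq 0$.

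To choose $\zeta_0$, write $G(z)=(\overline{b}z-1)(z-b)P(z)$ with $b\in\overline{\D}$ the Denjoy--Wolff point and $\text{Re}\,P\ge 0$; nontriviality forces $G\not\equiv 0$, hence $P\not\equiv 0$. Since $\text{Re}\,P\ge 0$, both $\tfrac{1}{1+P}$ and $\tfrac{P}{1+P}$ are bounded analytic functions on $\D$ that are not identically zero, so each has a finite nonzero radial limit at almost every point of $\T$; at any common such point $\zeta$ the limit $L(\zeta):=\lim_{r\to1}P(r\zeta)$ exists in $\C\setminus\{0\}$. I would fix such a $\zeta_0$ with, in addition, $\zeta_0\neq b$ (only a null set is discarded). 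Then the polynomial $q(z)=(\overline{b}z-1)(z-b)$ satisfies $q(\zeta_0)\neq 0$: when $b\in\D$ because $|\overline{b}\zeta_0|=|b|<1$ and $\zeta_0\neq b$, and when $b\in\T$ because then $q(z)=\overline{b}(z-b)^2$ and $\zeta_0\neq b$. Consequently $|G(r\zeta_0)|\to\delta:=|q(\zeta_0)|\,|L(\zeta_0)|>0$ as $r\to 1$.

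Next I would take $f^{*}(z)=(1-\overline{\zeta_0}z)^{-(\alpha+\frac1p)}$. By rotation invariance of $M_p$ and the classical estimate $M_p\!\big(r,(1-z)^{-\gamma}\big)\approx(1-r)^{\frac1p-\gamma}$ for $\gamma>\frac1p$, one gets $M_p(r,f^{*})\approx(1-r)^{-\alpha}$, so $f^{*}\in H(p,\infty,\alpha)$; and since $\overline{\zeta_0}(r\zeta_0)=r$ one has $N(f^{*})=\limsup_{r\to1}(1-r)^{\alpha+\frac1p}(1-r)^{-(\alpha+\frac1p)}=1$. For the other half, take $f\in H(p,\infty,\alpha)$ with $(Gf)'\in H(p,\infty,\alpha)$; applying Proposition \ref{growth} to $(Gf)'$ and integrating along the radius to $r\zeta_0$ gives $|(Gf)(r\zeta_0)|\le|(Gf)(0)|+\int_0^r|(Gf)'(t\zeta_0)|\,dt\le C_1+C_2\int_0^r(1-t)^{-(\alpha+\frac1p)}\,dt$, and in each of the cases where $\alpha+\tfrac1p$ is $\gtrless 1$ one checks directly that $(1-r)^{\alpha+\frac1p}\big(C_1+C_2\int_0^r(1-t)^{-(\alpha+\frac1p)}\,dt\big)\to 0$ as $r\to1$. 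Dividing by $|G(r\zeta_0)|\ge\delta/2$ (valid for $r$ near $1$) yields $(1-r)^{\alpha+\frac1p}|f(r\zeta_0)|\to 0$, i.e. $N(f)=0$. By the continuity of $N$ this forces $N\equiv 0$ on $[\varphi_t',H(p,\infty,\alpha)]$, while $N(f^{*})=1$; hence $f^{*}\notin[\varphi_t',H(p,\infty,\alpha)]$, which is precisely the claimed strict inclusion.

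The one genuinely delicate point is the selection of $\zeta_0$: one must know that the Herglotz factor $P$ of the infinitesimal generator is non-degenerate — possesses a finite nonzero radial limit — on a positive-measure (here full-measure) subset of $\T$, and this is exactly where the hypothesis that $\{\varphi_t\}$ is nontrivial is used. The growth-plus-integration estimate for $(Gf)'$ and the elementary properties of the auxiliary functional $N$ are routine once $\zeta_0$ is in hand.
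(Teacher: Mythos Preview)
Your argument is correct and takes a genuinely different route from the paper's. The paper argues by contradiction: if $[\varphi_t',H(p,\infty,\alpha)]=H(p,\infty,\alpha)$, then (after invoking Lemma~\ref{derivative} to pass from $(Gf)'\in H(p,\infty,\alpha)$ to $Gf\in H(p,\infty,\alpha-1)$) the multiplier $M_G$ must be bounded from $H(p,\infty,\alpha)$ into $H(p,\infty,\alpha-1)$, and plugging in the test functions $f_z$ of Proposition~\ref{growth} forces $|G(z)|\le C(1-|z|)$, hence $G\equiv 0$. You instead work directly with the closure: you select a single boundary direction $\zeta_0$ at which the Herglotz factor $P$ has a finite nonzero radial limit (via Fatou applied to $1/(1+P)$ and $P/(1+P)$), build the continuous seminorm $N(\phi)=\limsup_{r\to1}(1-r)^{\alpha+1/p}|\phi(r\zeta_0)|$, and show $N\equiv 0$ on $\{f:(Gf)'\in H(p,\infty,\alpha)\}$ by the radial integration estimate, while $N(f^{*})=1$ for the explicit $f^{*}(z)=(1-\overline{\zeta_0}z)^{-(\alpha+1/p)}$.

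What each approach buys: the paper's argument is shorter and stays within the machinery already set up (Lemma~\ref{derivative}, the test functions of Proposition~\ref{growth}), yielding a clean global obstruction on $G$. Your argument is more constructive---it actually names a function excluded from the subspace---and, because the seminorm $N$ is Lipschitz on $H(p,\infty,\alpha)$, it handles the passage to the closure in Theorem~\ref{subspace} transparently; it also avoids Lemma~\ref{derivative} altogether, relying only on Proposition~\ref{growth} and elementary boundary behaviour of positive--real--part functions. One small remark: your case split ``$\alpha+\tfrac1p\gtrless 1$'' should also cover $\alpha+\tfrac1p=1$, where $\int_0^r(1-t)^{-1}\,dt=-\log(1-r)$ and $(1-r)\log\tfrac{1}{1-r}\to 0$; this is harmless but worth stating.
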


\begin{proof}
Suppose $[\varphi_t',H(p,\infty,\alpha)]=H(p,\infty,\alpha).$ That means $$H(p,\infty,\alpha)\subseteq\{f\in H(p,\infty,\alpha):(Gf)'\in H(p,\infty,\alpha)\}.$$ By Lemma \ref{derivative} that is equivalent to $$H(p,\infty,\alpha)\subseteq\{f\in H(p,\infty,\alpha):Gf\in H(p,\infty,\alpha-1)\},$$ that is, for every $f\in H(p,\infty,\alpha)$ we have $Gf\in H(p,\infty,\alpha-1),$ and by the pointwise boundedness of functions in $H(p,\infty,\alpha-1),$ $$|G(z)f(z)|\leq\frac{\|Gf\|_{p,\infty,\alpha-1}}{(1-|z|)^{\alpha-1+\frac{1}{p}}}\leq\frac{\|M_G\|\|f\|_{p,\infty,\alpha}}{(1-|z|)^{\alpha-1+\frac{1}{p}}}$$ for any $z\in\mathbb{D}.$
Consider now for every $z\in\mathbb{D}$ the function $f_z(w)=\left(\frac{1-|z|^2}{(1-\overline{z}w)^2}\right)^{\alpha+\frac{1}{p}},$ $w\in\mathbb{D}.$ Clearly $f_z\in H(p,\infty,\alpha)$ for any $z\in\mathbb{D},$ $\|f_z\|_{p,\infty,\alpha}\approx 1$ and $|f_z(z)|=(1-|z|^2)^{-\alpha-\frac{1}{p}}.$ Applying the last inequalities to $f_z$ we get $$\frac{|G(z)|}{(1-|z|^2)^{\alpha+\frac{1}{p}}}=|G(z)f_z(z)|\leq\frac{\|M_G\|}{(1-|z|)^{\alpha-1+\frac{1}{p}}}$$ for any $z\in\mathbb{D},$ that means, $$|G(z)|\leq C(1-|z|)$$ for some constant $C$ and for every $z\in\mathbb{D}.$ Letting $|z|\to1$ we have $G\equiv0.$
\end{proof}

Now, for $H_0(p,\infty,\alpha) = [\varphi_t',H(p,\infty,\alpha)],$ assume first that the Denjoy-Wolff point $b\in\mathbb{D}.$ As we have mentioned before, we can assume $b=0$ in this case. 

\begin{thm}\label{denjoy} Let $\{\varphi_t\}$ be a semigroup with Denjoy-Wolff point $b\in\mathbb{D}.$ Then 
$$H_0(p,\infty,\alpha)=[\varphi_t',H(p,\infty,\alpha)]\Leftrightarrow \frac{1}{P}\in H_0(\infty,\infty,1).$$ 
\end{thm}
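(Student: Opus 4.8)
The plan is to translate the problem into a boundedness statement for the operator $W_{1/P}$ and then quote Theorem \ref{wgcompact}. First I would conjugate by a rotation to assume that the Denjoy--Wolff point is $b=0$, so that $G(z)=-zP(z)$ with $\text{Re}\,P\geq 0$ on $\mathbb{D}$. Then I would check the structural hypotheses of Proposition \ref{subspaceoperator} in its ``$b=0$'' form: both $H(p,\infty,\alpha)$ and $H_0(p,\infty,\alpha)$ contain the constants, and they satisfy the division property $f\in X\Leftrightarrow (f(z)-f(0))/z\in X$ — this is routine, writing $f(z)=f(0)+zg(z)$ and comparing the integral means $M_p(r,\cdot)$. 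Proposition \ref{subspaceoperator} then gives
$$[\varphi_t',H(p,\infty,\alpha)]=\overline{H(p,\infty,\alpha)\cap\bigl(W_{1/P}(H(p,\infty,\alpha))\oplus\mathbb{C}\bigr)}.$$

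Next I would record the key observation that $1/P$ always belongs to $H(\infty,\infty,1)$: since $\text{Re}\,P$ is a nonnegative harmonic function on $\mathbb{D}$, either it vanishes identically (whence $P$ is constant and $1/P$ trivially lies in $H_0(\infty,\infty,1)$), or $\text{Re}\,P(0)>0$ and Harnack's inequality gives $|P(z)|\geq\text{Re}\,P(z)\geq\frac12\text{Re}\,P(0)(1-|z|)$, i.e. $(1-|z|)|1/P(z)|\leq 2/\text{Re}\,P(0)$. By Proposition \ref{wgbounded}, $W_{1/P}$ is therefore bounded on $H(p,\infty,\alpha)$, so $W_{1/P}(H(p,\infty,\alpha))$ already sits inside $H(p,\infty,\alpha)$ and the intersection above is superfluous:
$$[\varphi_t',H(p,\infty,\alpha)]=\overline{W_{1/P}(H(p,\infty,\alpha))\oplus\mathbb{C}}.$$

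With this identification both implications fall out of the equivalence (c)$\Leftrightarrow$(d) of Theorem \ref{wgcompact}. If $1/P\in H_0(\infty,\infty,1)$, then $W_{1/P}\colon H(p,\infty,\alpha)\to H_0(p,\infty,\alpha)$; as $H_0(p,\infty,\alpha)$ is a closed subspace of $H(p,\infty,\alpha)$ containing the constants, the displayed closure lies inside it, so $[\varphi_t',H(p,\infty,\alpha)]\subseteq H_0(p,\infty,\alpha)$, and the reverse inclusion was already noted just before the statement. Conversely, if $[\varphi_t',H(p,\infty,\alpha)]=H_0(p,\infty,\alpha)$, then $W_{1/P}(H(p,\infty,\alpha))\subseteq[\varphi_t',H(p,\infty,\alpha)]=H_0(p,\infty,\alpha)$, so $W_{1/P}$ maps $H(p,\infty,\alpha)$ boundedly into $H_0(p,\infty,\alpha)$, and Theorem \ref{wgcompact} forces $1/P\in H_0(\infty,\infty,1)$.

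The only step that requires an idea rather than bookkeeping is the one in the second paragraph: that $1/P\in H(\infty,\infty,1)$ is not an extra hypothesis but a consequence of $\text{Re}\,P\geq 0$ via Harnack's inequality. This is what allows us to drop the intersection with $H(p,\infty,\alpha)$ and present $[\varphi_t',H(p,\infty,\alpha)]$ as the closure of a genuine subspace of the ambient space; without it the clean application of Theorem \ref{wgcompact} would not be available. I expect this to be the main (minor) obstacle, everything else being a routine use of Section 3 and Theorem \ref{wgcompact}.
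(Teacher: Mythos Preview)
Your proposal is correct and follows essentially the same route as the paper's own proof: identify $[\varphi_t',H(p,\infty,\alpha)]$ with $\overline{W_{1/P}(H(p,\infty,\alpha))\oplus\mathbb{C}}$ via Proposition~\ref{subspaceoperator}, remove the intersection using $1/P\in H(\infty,\infty,1)$ and Proposition~\ref{wgbounded}, and then invoke the equivalence (c)$\Leftrightarrow$(d) of Theorem~\ref{wgcompact}. The only cosmetic difference is that the paper deduces $1/P\in H(\infty,\infty,1)$ from the standard growth bound for functions of positive real part rather than spelling out Harnack's inequality, and it does not explicitly verify the division property---both are routine, as you note.
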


\begin{proof}
By Proposition \ref{subspaceoperator}, $$[\varphi_t', H(p,\infty,\alpha)]=\overline{H(p,\infty,\alpha)\cap(W_\gamma(H(p,\infty,\alpha))\oplus \mathbb{C})},$$ 
where $W_\gamma$ is the operator $$W_{\gamma}f(z)=\frac{1}{P(z)}\int_0^z f(\zeta)\,d\zeta,$$ $z\in\mathbb{D}.$ The function $P$ has positive real part, and so does $\frac{1}{P}.$ Therefore, $\frac{1}{|P(z)|}\leq\frac{C}{1-|z|}$ and $\frac{1}{P}\in H(\infty,\infty,1).$ Thus, by Proposition \ref{wgbounded} the operator $W_\gamma$ is bounded on $H(p,\infty,\alpha)$ and 
$$W_\gamma(H(p,\infty,\alpha))\oplus \mathbb{C}\subset H(p,\infty,\alpha).$$ 
From here, $$\overline{H(p,\infty,\alpha)\cap(W_\gamma(H(p,\infty,\alpha))\oplus \mathbb{C})}=\overline{W_\gamma(H(p,\infty,\alpha))\oplus \mathbb{C}}$$ and $$H_0(p,\infty,\alpha)=[\varphi_t,H(p,\infty,\alpha)]=\overline{W_\gamma(H(p,\infty,\alpha))\oplus \mathbb{C}}$$ if and only if $W_\gamma(H(p,\infty,\alpha))\subseteq H_0(p,\infty,\alpha).$ By Theorem \ref{wgcompact}, this is equivalent to $\frac{1}{P}\in H_0(\infty,\infty,1).$ 
\end{proof}

Now, for $b=1,$ recall that $$[\varphi_t',H(p,\infty,\alpha)]=\overline{\left\{f\in H(p,\infty,\alpha):((1-z)^2P\,f)'\in H(p,\infty,\alpha)\right\}}$$ or equivalently, by Lemma \ref{derivative}, $$[\varphi_t',H(p,\infty,\alpha)]=\overline{\left\{f\in H(p,\infty,\alpha):(1-z)^2P\,f\in H(p,\infty,\alpha-1)\right\}}.$$

\begin{thm}
For every nontrivial semigroup of analytic functions with Denjoy-Wolff point $b\in\mathbb{T}$ we have $$H_0(p,\infty,\alpha)\subsetneq [\varphi_t',H(p,\infty,\alpha)].$$
\end{thm}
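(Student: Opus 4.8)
The plan is to exhibit, for every nontrivial semigroup with Denjoy--Wolff point $b=1$, a function $f\in[\varphi_t',H(p,\infty,\alpha)]\setminus H_0(p,\infty,\alpha)$. By the reformulation just above the statement, it suffices to produce $f\in H(p,\infty,\alpha)\setminus H_0(p,\infty,\alpha)$ with $(1-z)^2P(z)f(z)\in H(p,\infty,\alpha-1)$. The natural candidate is to cancel the bad boundary behaviour of $f$ against the factor $(1-z)^2$: take
$$f(z)=\frac{1}{(1-z)^{\alpha+\frac1p+1}}.$$
First I would check that $f\in H(p,\infty,\alpha)\setminus H_0(p,\infty,\alpha)$. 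This is the standard computation $M_p\bigl(r,(1-z)^{-s}\bigr)\approx(1-r)^{-(s-1/p)}$ for $s>1/p$ (Duren, or the mixed-norm references in the excerpt), which with $s=\alpha+\frac1p+1$ gives $M_p(r,f)\approx(1-r)^{-(\alpha+1)}$, hence $(1-r)^\alpha M_p(r,f)\approx(1-r)^{-1}\to\infty$: so $f$ lies in $H(p,\infty,\alpha)$ it actually does \emph{not}; I need $f\in H(p,\infty,\alpha)$, so I must instead pick the exponent so that $M_p(r,f)=O((1-r)^{-\alpha})$ while $(1-z)^2 f$ improves by the full two powers only after multiplying by $P$. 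The cleaner route: choose
$$f(z)=\frac{1}{(1-z)^{\alpha+\frac1p-1}},$$
so that $M_p(r,f)\approx(1-r)^{-\alpha}$, giving $f\in H(p,\infty,\alpha)\setminus H_0(p,\infty,\alpha)$ (indeed the limsup of $(1-r)^\alpha M_p(r,f)$ is a positive constant). This is exactly the test function already used in the proof of Theorem~\ref{wgcompact}, so the two growth estimates are available verbatim from the excerpt.

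The next step is to verify $(1-z)^2P(z)f(z)\in H(p,\infty,\alpha-1)$. Write $(1-z)^2P(z)f(z)=(1-z)^2P(z)(1-z)^{-(\alpha+\frac1p-1)}$. Since $\mathrm{Re}\,P\ge 0$ and $P$ is analytic, $P$ lies in every $H(\infty,\infty,\epsilon)$ for $\epsilon>0$; in particular $|P(z)|\le C/(1-|z|)$, but more is true near $z=1$: a function of positive real part satisfies $|P(z)|\le C\,\mathrm{Re}\,P(0)\,\frac{1+|z|}{1-|z|}$, but the key point I want is simply the crude bound $|P(z)|\lesssim(1-|z|)^{-1}$ together with the fact that the extra factor $(1-z)^2$ has modulus comparable to $(1-|z|)^2$ only on a Stolz angle — off the angle $|1-z|$ can be as large as $2$. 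So I would instead bound pointwise: $|(1-z)^2P(z)f(z)|\le|1-z|^2\cdot\frac{C}{1-|z|}\cdot\frac{1}{|1-z|^{\alpha+\frac1p-1}}=C\frac{|1-z|^{3-\alpha-\frac1p}}{1-|z|}$. Using $|1-z|\ge 1-|z|$ this is $\le C(1-|z|)^{2-\alpha-\frac1p}$ when $3-\alpha-\frac1p\ge 1$, i.e. $\alpha+\frac1p\le 2$, and one must also control the case $|1-z|$ large separately. To avoid this case split, the honest argument is: $M_p\bigl(r,(1-z)^2P f\bigr)$. Here $(1-z)^2 f=(1-z)^{3-\alpha-\frac1p}$ and multiplication by $P$ acts on $M_p$ via $\sup_\theta|P(re^{i\theta})|\le C/(1-r)$; so $M_p(r,(1-z)^2Pf)\le\frac{C}{1-r}M_p(r,(1-z)^{3-\alpha-\frac1p})\approx\frac{C}{1-r}(1-r)^{-((3-\alpha-\frac1p)^- )}$ where the exponent behaves like $\max(0,\ \tfrac1p-(3-\alpha-\tfrac1p))=\max(0,\alpha+\tfrac2p-3)$. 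Multiplying by $(1-r)^{\alpha-1}$ one needs $\alpha-1-1-\max(0,\alpha+\frac2p-3)\ge 0$, which fails for large $\alpha$. So the single test function does not work uniformly, and this is the real obstacle.

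The fix — and the step I expect to be the crux — is to choose the exponent of the test function \emph{depending on $\alpha,p$} so that $(1-z)^2$ buys exactly the right improvement. Set $f(z)=(1-z)^{-s}$ with $s=\alpha+\frac1p$; then $(1-z)^2 f=(1-z)^{2-s}$ with $2-s=2-\alpha-\frac1p$, and $M_p\bigl(r,(1-z)^{2-s}\bigr)\approx(1-r)^{-(2-s-\frac1p)^+}=(1-r)^{-(\,1-\alpha-\frac2p\,)^+}$, while multiplication by $P$ costs one power of $(1-r)^{-1}$ in $M_p(r,\cdot)$. One checks $M_p(r,f)\approx(1-r)^{-(s-1/p)}=(1-r)^{-\alpha}$, so $f\in H(p,\infty,\alpha)\setminus H_0$. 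Then $(1-r)^{\alpha-1}M_p(r,(1-z)^2Pf)\lesssim(1-r)^{\alpha-1}\cdot(1-r)^{-1}\cdot(1-r)^{-(1-\alpha-2/p)^+}$, which is bounded precisely when $\alpha-2-(1-\alpha-\frac2p)^+\ge 0$; if $1-\alpha-\frac2p\le 0$ this is $\alpha\ge 2$, not always true. The conclusion is that \textbf{no single power function suffices}, and the correct approach is to separate the behaviour near $z=1$ from the rest of the circle using a Stolz-angle/horodisc decomposition: on the Stolz angle $S=\{|1-z|\le K(1-|z|)\}$ one has $|1-z|^2\approx(1-|z|)^2$ and the factor $(1-z)^2$ genuinely improves the weight by two powers, so $(1-z)^2Pf$ restricted to $S$ behaves like an $H(p,\infty,\alpha-1)$ function after using $|P|\lesssim(1-|z|)^{-1}$ and $f\in H(p,\infty,\alpha)$; off $S$ the measure $(1-r)^{\alpha q-1}dr\,d\theta$ of the relevant annular pieces is small, letting one absorb the loss. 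Assembling these two estimates gives $(1-z)^2Pf\in H(p,\infty,\alpha-1)$ for a suitably chosen bounded-away-from-$H_0$ function $f$ supported in growth near $z=1$; hence $f\in[\varphi_t',H(p,\infty,\alpha)]\setminus H_0(p,\infty,\alpha)$, which is the desired strict inclusion. The main obstacle, then, is this Stolz-angle splitting and the off-angle measure estimate; once that is in place the theorem follows immediately from Lemma~\ref{derivative} and the reformulation of $[\varphi_t',H(p,\infty,\alpha)]$ recorded just before the statement.
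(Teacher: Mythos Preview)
Your proposal never arrives at a complete argument: after several false starts and arithmetic slips you land on a vague Stolz-angle decomposition that you do not carry out. In fact the paper's proof is precisely the single-power test that you try and then abandon. The paper takes $f(z)=(1-z)^{-(\alpha+\frac1p)}$, observes that $f\in H(p,\infty,\alpha)\setminus H_0(p,\infty,\alpha)$, and then, since $P\in H(\infty,\infty,1)$, reduces via Lemma~\ref{multiplier} to checking $(1-z)^2f=(1-z)^{2-\alpha-\frac1p}\in H(p,\infty,\alpha-2)$, which is immediate from the standard estimate $M_p\bigl(r,(1-z)^{-\beta}\bigr)\approx(1-r)^{-(\beta-\frac1p)}$.

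The reason you concluded this fails is a sign error in your $M_p$ computation. For $\beta>\frac1p$ one has $M_p\bigl(r,(1-z)^{-\beta}\bigr)\approx(1-r)^{-(\beta-\frac1p)^+}$, so with $\beta=s-2=\alpha+\frac1p-2$ the correct exponent is $(\alpha-2)^+$, not the $(1-\alpha-\frac2p)^+$ you wrote (you effectively interchanged $2-s$ and $s-2$). With the right exponent the estimate reads $(1-r)^{\alpha-1}M_p\bigl(r,(1-z)^2Pf\bigr)\lesssim(1-r)^{\alpha-2-(\alpha-2)^+}$, which is bounded and matches exactly the paper's claim that $(1-z)^2f\in H(p,\infty,\alpha-2)$. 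Your earlier candidate $s=\alpha+\frac1p-1$ also fails, but not for the reason you gave: that function lies \emph{in} $H_0(p,\infty,\alpha)$ (since $(1-r)^\alpha M_p(r,f)\approx(1-r)\to0$); it is the extremal for $H(p,\infty,\alpha-1)$ used in Theorem~\ref{wgcompact}, not for $H(p,\infty,\alpha)$. The Stolz-angle splitting you propose as the ``crux'' is therefore unnecessary, and in any case is not executed in your write-up.
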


\begin{proof}
Let $P$ be an arbitrary real-part function associated with a generator of a nontrivial semigroup. Since $P\in H(\infty,\infty,1),$ if $f$ is such that $(1-z)^2f\in H(p,\infty,\alpha-2)$ then $f\in [\varphi_t',H(p,\infty,\alpha)].$ Now let $f(z)=\frac{1}{(1-z)^{\alpha+\frac{1}{p}}},$ then $f\in H(p,\infty,\alpha)\backslash H_0(p,\infty,\alpha),$ and $(1-z)^2f\in H(p,\infty,\alpha-2).$ Therefore, $H_0(p,\infty,\alpha)\neq [\varphi_t',H(p,\infty,\alpha)].$
\end{proof}

\section{Semigroups of weigthed composition operators on Weighted Banach spaces}

As stated in Section 2, we are interested in the weighted Banach spaces $H_v^\infty$ with quasi-normal weights, that is, spaces satisfying $$H_v^\infty=\mathcal{B}^\infty_{(1-r^2)v(r)}.$$
Thanks to the weight being quasi-normal, several results in this section are similar to the analogous in the previous section. First, we study the boundedness of the multiplier. 

\begin{lemma}\label{multiplierweighted} Let $v$ be a weight and $g$ an analytic function in the unit disk and $M_g$ the pointwise multiplier with symbol $g$. The following are equivalent:
\begin{enumerate}
\item[(a)] $M_g: H_{v(r)/(1-r)}^\infty\to H_v^\infty;$
\item[(b)] $M_g: H_{v(r)/(1-r)}^0\to H_v^0;$
\item[(c)] $g\in H(\infty,\infty,1).$
\end{enumerate}
\end{lemma}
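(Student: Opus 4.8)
The plan is to mimic the proof of Lemma \ref{multiplier}, since the quasi-normality of $v$ makes the weighted Banach spaces behave like the mixed norm spaces $H(p,\infty,\alpha)$ with respect to pointwise growth. The key structural fact needed is that the weight $v(r)/(1-r)$ is precisely what turns a "growth of order $v$" estimate into a "growth of order $v/(1-r)$" estimate after multiplication by a function bounded by $C/(1-|z|)$, and that the associated weight of $v(r)/(1-r)$ captures the sharp growth of functions in $H^\infty_{v(r)/(1-r)}$, so that test functions of extremal growth are available.

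First I would prove $(c)\Rightarrow(a)$ and $(c)\Rightarrow(b)$ together. Assume $g\in H(\infty,\infty,1)$, so $|g(z)|\le \|g\|_{\infty,\infty,1}/(1-|z|)$ for all $z\in\mathbb{D}$. Given $f\in H^\infty_{v(r)/(1-r)}$, for each $z\in\mathbb{D}$ I would estimate
\begin{align*}
v(z)|g(z)f(z)|&\le \|g\|_{\infty,\infty,1}\,\frac{v(z)}{1-|z|}\,|f(z)|\le \|g\|_{\infty,\infty,1}\,\|f\|_{v(r)/(1-r)},
\end{align*}
taking the supremum over $z$ to conclude $\|gf\|_v\le \|g\|_{\infty,\infty,1}\|f\|_{v(r)/(1-r)}$. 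The very same pointwise inequality shows that if $v(z)|f(z)|/(1-|z|)\to 0$ as $|z|\to 1$ then $v(z)|g(z)f(z)|\to 0$, which gives $(c)\Rightarrow(b)$.

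For the converse direction $(a)\Rightarrow(c)$ (and likewise $(b)\Rightarrow(c)$), suppose $M_g$ is bounded from $H^\infty_{v(r)/(1-r)}$ to $H^\infty_v$ with norm $M$. Fix $z\in\mathbb{D}$ and use the extremal function for the associated weight of $v(r)/(1-r)$: by \cite{BBT}, there is $f_z\in H^\infty_{v(r)/(1-r)}$ with $\|f_z\|_{v(r)/(1-r)}\le 1$ and $|f_z(z)|=1/\widetilde{w}(z)$, where $w(r)=v(r)/(1-r)$ and $\widetilde{w}$ is its associated weight. Then
\begin{align*}
v(z)|g(z)|\,\frac{1}{\widetilde{w}(z)}=v(z)|g(z)f_z(z)|\le \|gf_z\|_v\le M,
\end{align*}
so $|g(z)|\le M\,\widetilde{w}(z)/v(z)$. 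Since $\widetilde w$ is equivalent to $w=v(r)/(1-r)$ up to the standard comparison constants (here one uses that $v$, being quasi-normal hence in particular satisfying a regularity condition, has $\widetilde v\approx v$, and the same passes to $w$; alternatively invoke $H^\infty_w=H^\infty_{\widetilde w}$ together with $\widetilde w\le w$ and a lower bound for $\widetilde w$ coming from a single explicit competitor), one obtains $|g(z)|\le C M\, w(z)/v(z)=CM/(1-|z|)$, i.e.\ $g\in H(\infty,\infty,1)$. For $(b)\Rightarrow(c)$ one runs the identical argument, noting that the extremal functions $f_z$ in fact lie in $H^0_{v(r)/(1-r)}$ (the function $f_z$ from \cite{BBT} can be taken in $H^0_w$ since $w$ is typical), so the $H^0\to H^0$ hypothesis applies.

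The main obstacle is the reverse direction: ensuring that the associated weight $\widetilde w$ of $w(r)=v(r)/(1-r)$ is comparable to $w$ itself, so that the extremal test functions genuinely detect growth of order $1/(1-|z|)$ relative to $v$. This is where the quasi-normality of $v$ is used: a quasi-normal weight is comparable to its associated weight, and this property is inherited by $v(r)/(1-r)$ (or, more robustly, one avoids the issue by using the explicit reproducing-type functions $f_z(w)=\bigl((1-|z|^2)/(1-\bar z w)^2\bigr)^{N}$ analogous to those in Proposition \ref{growth}, choosing the exponent $N$ so that $\|f_z\|_{v(r)/(1-r)}\approx 1$ and $|f_z(z)|\approx 1/w(z)$, which is possible precisely because quasi-normality pins down the growth rate of $v$ between two powers of $1-r$). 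Once this comparability is in hand, the rest is the routine pointwise estimation above, exactly parallel to Lemma \ref{multiplier}.
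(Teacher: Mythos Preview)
Your approach is exactly the paper's, which merely says the proof is ``analogous to Lemma~\ref{multiplier}, using the extremal functions $f_z$ from \cite{BBT} and $H^\infty_v=H^\infty_{\tilde v}$.'' You have in fact been more careful than the paper on the one real issue: the converse direction only yields $|g(z)|\le M\,\tilde w(z)/v(z)$ with $w(r)=v(r)/(1-r)$, and passing from this to $|g(z)|\lesssim 1/(1-|z|)$ requires $\tilde w\approx w$, which is not automatic for an arbitrary weight. Your appeal to quasi-normality is the right fix in the context of Section~5, although note that the lemma is literally stated for a general weight $v$, so strictly speaking this hypothesis should be added. Two small slips: associated weights always satisfy $\tilde w\ge w$, not $\tilde w\le w$ as in your parenthetical; and the claim that the BBT extremal functions can be taken in $H^0_w$ (needed for $(b)\Rightarrow(c)$) deserves a word of justification rather than an assertion.
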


\begin{proof}
The proof is analogous to the proof of Lemma \ref{multiplier}, using the definition of the norm of $H_v^\infty,$ and the fact that there is a $f_z$ in $H_v^\infty$ such that $|f_z(z)|=1/\tilde{v}(z)$ and $\|f_z\|\leq 1,$ and that $H_v^\infty=H_{\tilde{v}}^\infty.$
\end{proof}

The boundedness of $W_g = M_g\circ V_{id}$ is now clear on $H_v^\infty$ if the weight is quasi-normal, since we can use Theorem \ref{lusky}. 

\begin{prop}\label{wgboundedweighted} Let $v$ be a quasi-normal weight and $g$ an analytic function in the unit disk. The following are equivalent:
\begin{enumerate}
\item[(a)] $W_g: H_v^\infty\to H_v^\infty;$
\item[(b)] $W_g: H_v^0\to H_v^0;$
\item[(c)] $W_g: H_v^0\to H_v^\infty;$
\item[(d)] $g\in H(\infty,\infty,1).$
\end{enumerate}
\end{prop}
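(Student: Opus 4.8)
The plan is to mirror the structure of the proof of Proposition~\ref{wgbounded}, using the factorization $W_g = M_g \circ V_{id}$ together with the quasi-normality of $v$, which allows us to trade a derivative for a shift of the weight by a factor $(1-r^2)$. The implications $(a)\Rightarrow(c)$ and $(b)\Rightarrow(c)$ are trivial, so the work lies in $(d)\Rightarrow(a)$, $(d)\Rightarrow(b)$, and $(c)\Rightarrow(d)$.

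First I would establish $(d)\Rightarrow(a)$ and $(d)\Rightarrow(b)$. Since $v$ is quasi-normal, Theorem~\ref{lusky} gives $H_v^\infty = \mathcal{B}^\infty_{(1-r^2)v(r)}$ and $H_v^0 = \mathcal{B}^0_{(1-r^2)v(r)}$, which by the definition of the weighted Bloch space means precisely that $f \in H_v^\infty$ if and only if $f' \in H^\infty_{(1-r^2)v(r)}$, and likewise for the little-oh versions. Equivalently, the differentiation operator $D$ maps $H_v^\infty$ boundedly into $H^\infty_{(1-r^2)v(r)}$ and the indefinite-integral operator $V_{id}$ maps $H^\infty_{(1-r^2)v(r)}$ boundedly into $H_v^\infty$ (up to the additive constant $f(0)$, which is harmless here since we only integrate), and similarly between the corresponding $H^0$ spaces. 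Now observe that the weight $w(r) := (1-r^2)v(r)$ can be written, up to the comparable factor $(1+r)\approx 1$, as $w(r) \approx (1-r)\cdot v(r)$, so that $H^\infty_{(1-r^2)v(r)} = H^\infty_{v(r)/(1-r)^{-1}}$ is, in the notation of Lemma~\ref{multiplierweighted}, exactly the space $H^\infty_{v(r)/(1-r)}$ with $v$ there replaced by $v$ here --- wait, I must be careful: Lemma~\ref{multiplierweighted} requires the intermediate space $H^\infty_{v(r)/(1-r)}$, and since $(1-r^2)v(r) \approx (1-r)v(r)$ is a \emph{smaller} weight than $v(r)/(1-r)$, these are not the same space. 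The correct reading is: $V_{id}$ sends $H^\infty_{(1-r^2)v(r)}$ into $H_v^\infty$, so I should instead apply Lemma~\ref{multiplierweighted} with the roles arranged so that $M_g$ maps $H^\infty_{v(r)/(1-r)}$ into $H_v^\infty$ when $g \in H(\infty,\infty,1)$; composing, $W_g = M_g \circ V_{id}$ where $V_{id}: H_{w}^\infty \to H_v^\infty$ for $w=(1-r^2)v$. Since for $g\in H(\infty,\infty,1)$ one has $|g(z)| \le \|g\|_{\infty,\infty,1}/(1-|z|)$, multiplication by $g$ maps $H_v^\infty$ into $H^\infty_{(1-|z|)v(|z|)} = H^\infty_{w/(1+r)} \approx H^\infty_w$, hence $D$ lands us in $H^\infty_w$, multiplication by $g$ followed by nothing needed --- the cleanest route is: for $f \in H_v^\infty$, $V_{id}f \in H^\infty_{w}$ by quasi-normality (as $(V_{id}f)' = f \in H_v^\infty$), and then $g\cdot V_{id}f$ satisfies $w(z)|g(z)||V_{id}f(z)| \le \|g\|_{\infty,\infty,1}\, \frac{w(z)}{1-|z|}|V_{id}f(z)| \approx v(z)|V_{id}f(z)| \le \|V_{id}f\|_w$, so $W_g f \in H_v^\infty$. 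The little-oh statement follows by the same chain, using the $H^0$ versions of the quasi-normality identity and the fact that $w(z)/(1-|z|) \approx v(z) \to 0$ forces the product into $H^0_w$, hence $V_{id}f \in H^0_w$ and $g V_{id}f$ has $v(z)$-weighted limit zero.

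For $(c)\Rightarrow(d)$: suppose $W_g: H_v^0 \to H_v^\infty$ is bounded. By quasi-normality, $D: H^0_{v(r)/(1-r)} \to H_v^0$ is bounded (since $f' \in H^0_{w}$ and $w \approx (1-r)v \le$ ... here I need $D$ from $H^0_{v/(1-r)}$ into $H^0_v$, which holds because if $g' \in H_v^0$ then $g \in H^\infty_{v}$ with the appropriate little-oh control, i.e.\ the weighted Bloch characterization applied with weight $v(r)/(1-r)$ in place of $v$, noting $(1-r^2)\cdot v(r)/(1-r) \approx v(r)$ and that $v(r)/(1-r)$ is again quasi-normal when $v$ is). Composing, $M_g = W_g \circ D : H^0_{v(r)/(1-r)} \to H_v^\infty$ is bounded, and one checks it in fact lands in $H_v^0$ or at least is bounded into $H_v^\infty$; then Lemma~\ref{multiplierweighted} (which only needs boundedness of $M_g$ between the stated spaces, via the test functions $f_z$) forces $g \in H(\infty,\infty,1)$. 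The test-function argument is identical to the one in Lemma~\ref{multiplier}: using $f_z \in H^\infty_{v(r)/(1-r)}$ with $\|f_z\|\le 1$ and $|f_z(z)| = 1/\widetilde{(v/(1-r))}(z) \approx (1-|z|)/\tilde v(z)$, boundedness of $M_g$ yields $v(z)|g(z)|(1-|z|)/\tilde v(z) \lesssim \|M_g\|$, and since $v \approx \tilde v$ this gives $|g(z)|(1-|z|) \lesssim \|M_g\|$, i.e.\ $g \in H(\infty,\infty,1)$.

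The main obstacle I anticipate is purely bookkeeping rather than conceptual: one must verify that the intermediate weight $v(r)/(1-r)$ is itself quasi-normal (so that Theorem~\ref{lusky}'s derivative characterization applies to it), and keep straight which of $(1-r^2)v(r)$, $(1-r)v(r)$, $v(r)/(1-r)$ is the correct intermediate space at each stage --- these differ by factors of $(1-r)$ and it is easy to shift by one power too many. The point that makes it all work is $(1-r^2)\cdot \frac{v(r)}{1-r} \approx v(r)$, so that "integrate, then multiply by $g\in H(\infty,\infty,1)$" exactly returns us to $H_v^\infty$, and the quasi-normality hypothesis is what converts "$V_{id}f$ has a derivative in $H_v^\infty$" into "$V_{id}f \in H^\infty_{(1-r^2)v(r)}$". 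I would also remark that, unlike Theorem~\ref{wgcompact}, no analogue of the Dunford--Schwartz biduality argument is needed here, since $(c)$ is boundedness (not mapping into the little-oh space) and the three-way equivalence closes directly.
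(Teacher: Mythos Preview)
Your overall strategy is the paper's: factor $W_g=M_g\circ V_{id}$, use quasi-normality (Theorem~\ref{lusky}) to control $V_{id}$, then Lemma~\ref{multiplierweighted} for $M_g$; for $(c)\Rightarrow(d)$, precompose with $D$ and run the test-function argument. The $(c)\Rightarrow(d)$ portion of your proposal is fine and is exactly what the paper does, only the paper skips the black-box appeal to Lemma~\ref{multiplierweighted} and applies $W_g$ directly to $f_z'$ for the test function $f_z\in H^0_{v(r)/(1-r)}$, reading off $(1-|z|)|g(z)|\lesssim\|W_g\|$ from $W_g(f_z')=g\cdot(f_z-f_z(0))$.

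Where your write-up actually goes wrong is in the ``cleanest route'' for $(d)\Rightarrow(a)$, and it is precisely the bookkeeping slip you anticipated. Quasi-normality of $v$ says $h\in H_v^\infty\iff h'\in H^\infty_{(1-r^2)v}$; it locates \emph{derivatives} of $H_v^\infty$-functions, not antiderivatives. So from $(V_{id}f)'=f\in H_v^\infty$ you cannot conclude $V_{id}f\in H^\infty_w$ with $w=(1-r^2)v$. Your chain then fails at the last step $v(z)|V_{id}f(z)|\le\|V_{id}f\|_w$, which would require $v\le w$, the reverse of what holds. The correct target for $V_{id}$ is $H^\infty_{v(r)/(1-r)}$ (this is what the paper writes), after which Lemma~\ref{multiplierweighted} gives $M_g:H^\infty_{v(r)/(1-r)}\to H_v^\infty$ for $g\in H(\infty,\infty,1)$ and the composition closes. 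To see $V_{id}:H_v^\infty\to H^\infty_{v(r)/(1-r)}$ via the Bloch characterization one applies Theorem~\ref{lusky} to the weight $u(r)=v(r)/(1-r)$, using $(1-r^2)u(r)\approx v(r)$; you correctly flag that this tacitly needs $u$ itself to be quasi-normal, a point the paper passes over in silence. Once you replace $w=(1-r^2)v$ by $v/(1-r)$ in that paragraph, your argument coincides with the paper's.
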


\begin{proof}
In this case Theorem \ref{lusky} gives us the boundedness of $$V_{id}:H_v^\infty\to H_{v(r)/(1-r)}^\infty,$$ that, with Lemma \ref{multiplierweighted}, gives $(d)\implies (a).$ It also proves that the function $f_z',$ where $f_z\in H_{v(r)/(1-r)}^0$ is a function satisfying $$|f_z(z)|=\frac{1-|z|}{\tilde{v}(z)}$$ and $\|f_z\|_{v(r)/(1-r)}\approx1,$ belongs to $H_v^0.$ Therefore, if the operator $W_g$ is bounded from $H_v^0$ to $H_v^\infty$ then $$|g(z)|\left|\int_0^z f_0(\zeta)\,d\zeta\right|=|g(z)||f_z(z)|=\frac{|g(z)|(1-|z|)}{\tilde{v}(z)}\leq\frac{CM}{\tilde{v}(z)},$$ and therefore $g\in H(\infty,\infty,1).$

\end{proof}

We will also use the analogue of Theorem \ref{wgcompact}.

\begin{thm}\label{wgcompactweighted} Let $v$ be a quasi-normal weight and $g\in H(\infty,\infty,1).$ The following statements are equivalent:
\begin{enumerate}
\item[(a)] $W_g: H_v^\infty\to H_v^\infty$ is compact;
\item[(b)] $W_g: H_v^\infty\to H_v^\infty$ is weakly compact;
%\item[(c)] $W_g: H_0(p,\infty,\alpha)\to H_0(p,\infty,\alpha)$ is compact;
%\item[(d)] $W_g: H_0(p,\infty,\alpha)\to H_0(p,\infty,\alpha)$ is weakly compact;
\item[(c)] $W_g: H_v^\infty\to H_v^0;$
%\item[(f)] $W_g: H(p,\infty,\alpha)\to H(p,\infty,\alpha)$ does not fix a copy of $l_\infty$;
%\item[(f)] $W_g: H_0(p,\infty,\alpha)\to H_0(p,\infty,\alpha)$ does not fix a copy of $c_0$;
\item[(d)] $g\in H_0(\infty,\infty,1).$
\end{enumerate}

\end{thm}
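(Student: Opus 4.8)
The plan is to mirror the proof of Theorem \ref{wgcompact} almost verbatim, replacing the mixed norm machinery with the corresponding facts about quasi-normal weighted Banach spaces. The underlying structure is: factor $W_g = M_g \circ V_{id}$, use the quasi-normality to move between $H_v^\infty$ and $H^\infty_{v(r)/(1-r)}$ (via Theorem \ref{lusky}), and use Lemma \ref{multiplierweighted} to control the multiplier. First I would prove $(d)\Rightarrow(a)$: given $g\in H_0(\infty,\infty,1)$ and a sequence $\{f_n\}$ in the unit ball of $H_v^\infty$ converging to $0$ uniformly on compact subsets, split $\sup_z v(z)|g(z)||V_{id}f_n(z)|$ into $|z|\le R$ and $|z|>R$. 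On $|z|>R$ use $v(z)|g(z)|(1-|z|)\cdot \frac{|V_{id}f_n(z)|}{1-|z|}$ together with the boundedness $V_{id}:H_v^\infty\to H^\infty_{v(r)/(1-r)}$ and the smallness of $|g(z)|(1-|z|)$; on $|z|\le R$ use uniform convergence to $0$ of $f_n$ (hence of $V_{id}f_n$) and the boundedness of $v$. This gives $\|W_g f_n\|_v\to 0$, i.e. compactness.

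Next I would establish $(a)\Rightarrow(d)$ by contraposition exactly as in Theorem \ref{wgcompact}: since $D:H^\infty_{v(r)/(1-r)}\to H_v^\infty$ is bounded by Theorem \ref{lusky} (here I use $\mathcal B^\infty_{v(r)/(1-r)} = H_v^\infty$ in the form that differentiation maps $H^\infty_{v(r)/(1-r)}$ into $H_v^\infty$), compactness of $W_g$ forces compactness of $M_g = W_g\circ D : H^\infty_{v(r)/(1-r)}\to H_v^\infty$. If $g\notin H_0(\infty,\infty,1)$, pick $\{z_n\}$ with $|z_n|\to1$ and $|g(z_n)|\ge C/(1-|z_n|)$, and feed in the extremal functions $f_{z_n}\in H^\infty_{v(r)/(1-r)}$ from the Bierstedt--Bonet--Taskinen construction (satisfying $\|f_{z_n}\|\lesssim 1$, $|f_{z_n}(z_n)|=1/\widetilde{(v/(1-r))}(z_n)$, and $f_{z_n}\to0$ uniformly on compacta since the associated weight of $v(r)/(1-r)$ still tends to $0$). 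Then $\|M_g f_{z_n}\|_v \ge v(z_n)|g(z_n)||f_{z_n}(z_n)| \ge C\, v(z_n)/(1-|z_n|)\cdot 1/\widetilde{(v/(1-r))}(z_n) \gtrsim C$, contradicting compactness; here one needs $v(z)/(1-|z|) \approx \widetilde{(v/(1-r))}(z)$ up to the harmless comparison between a weight and its associated weight, which is exactly what the extremal function encodes.

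For $(b)\Leftrightarrow(c)$ I would invoke Theorem \ref{DunSch} together with the biduality $(H_v^0)^{**}=H_v^\infty$ (valid for typical weights, and a quasi-normal weight is typical): $W_g:H_v^\infty\to H_v^0$ holds iff $W_g^{**}:(H_v^\infty)^{**}\to (H_v^0)^{**}=H_v^\infty$ lands in $H_v^0$, wait — more precisely, $W_g$ acting on $H_v^0$ has $W_g^{**}:(H_v^0)^{**}=H_v^\infty\to (H_v^0)^{**}=H_v^\infty$, and the condition $W_g^{**}((H_v^0)^{**})\subseteq H_v^0$ of Theorem \ref{DunSch} becomes $W_g(H_v^\infty)\subseteq H_v^0$, which is (c); thus (b) $\Leftrightarrow$ (c). Finally $(d)\Rightarrow(c)$ is a direct estimate: for $g\in H_0(\infty,\infty,1)$ and $f\in H_v^\infty$, $v(z)|g(z)||V_{id}f(z)| = v(z)|g(z)|(1-|z|)\cdot\frac{|V_{id}f(z)|}{1-|z|}\le \|V_{id}f\|_{v(r)/(1-r)}\cdot |g(z)|(1-|z|)\to 0$ as $|z|\to1$. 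And $(c)\Rightarrow(d)$: compose with $D$ as before to get $M_g:H^\infty_{v(r)/(1-r)}\to H_v^0$ bounded; if $g\notin H_0(\infty,\infty,1)$, test against $f(z)=1/\bigl((1-z)\widetilde{(v/(1-r))}\text{-type growth}\bigr)$ — concretely a function in $H^\infty_{v(r)/(1-r)}\setminus H^0_{v(r)/(1-r)}$ with large modulus along $\{z_n\}$ — to contradict $M_g f\in H_v^0$. The main obstacle is bookkeeping the associated weight $\widetilde v$ and the weight $v(r)/(1-r)$ correctly (ensuring quasi-normality is preserved under the $1/(1-r)$ shift and that $H^\infty_{v(r)/(1-r)}=H^\infty_{\widetilde{v(r)/(1-r)}}$), since everything else is parallel to Theorem \ref{wgcompact}; I would state these weight comparisons explicitly and cite \cite{BBT} and \cite{BCHMP} rather than reprove them.
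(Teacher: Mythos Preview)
Your overall scheme---mirroring Theorem~\ref{wgcompact} via the factorization $W_g=M_g\circ V_{id}$, Theorem~\ref{lusky}, and Lemma~\ref{multiplierweighted}---matches the paper for $(a)\Leftrightarrow(d)$, $(d)\Rightarrow(c)$, and the biduality link with $(H_v^0)^{**}=H_v^\infty$. The substantive divergence is in how the cycle of implications is closed.

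The paper does \emph{not} transplant the $(c)\Rightarrow(d)$ argument from the mixed-norm setting. Instead it establishes $(c)\Rightarrow(b)$ via Dunford--Schwartz and then closes the loop by invoking an external result, Theorem~5.2 of \cite{CHweighted}, for $(b)\Rightarrow(a)$. Your attempt to prove $(c)\Rightarrow(d)$ directly has a genuine gap: in the mixed-norm case the single explicit test function $f(z)=1/(1-z)^{\alpha+1/p-1}$ lies in $H(p,\infty,\alpha-1)\setminus H_0(p,\infty,\alpha-1)$ and is near-extremal at \emph{every} radius, so it can be evaluated along the sequence $\{z_n\}$ determined by $g\notin H_0(\infty,\infty,1)$. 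For a general quasi-normal weight $v$ no such ``globally extremal'' function is available. The extremal functions $f_z$ from \cite{BBT} depend on the basepoint, and there is no reason a fixed $f\in H^\infty_{v(r)/(1-r)}\setminus H^0_{v(r)/(1-r)}$ should satisfy $\tfrac{v(z_n)}{1-|z_n|}\,|f(z_n)|\ge c>0$ along the particular sequence $\{z_n\}$ dictated by $g$. Your phrase ``a function \ldots\ with large modulus along $\{z_n\}$'' hides precisely the step that does not go through for arbitrary quasi-normal $v$, and this is presumably why the paper routes through \cite{CHweighted} instead.

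There is also a smaller slip in your $(b)\Leftrightarrow(c)$ paragraph: after your self-correction you apply Theorem~\ref{DunSch} to $W_g:H_v^0\to H_v^0$, which shows that \emph{this} operator is weakly compact iff $W_g(H_v^\infty)\subseteq H_v^0$; but (b) is weak compactness of $W_g:H_v^\infty\to H_v^\infty$, a different statement. The paper's version---take the bidual of $W_g:H_v^\infty\to H_v^0$, land in $(H_v^0)^{**}=H_v^\infty$, and apply Theorem~\ref{DunSch} to $W_g:H_v^\infty\to H_v^\infty$---gives $(c)\Rightarrow(b)$, which together with the cited $(b)\Rightarrow(a)$ yields the full cycle $(d)\Rightarrow(c)\Rightarrow(b)\Rightarrow(a)\Rightarrow(d)$.
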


\begin{proof}
The proof of $(a)\Leftrightarrow(d)\Rightarrow (c)$ follows the same steps as in the mixed norm spaces, using the ideas of the last proof. 

To prove $(c)\Leftrightarrow(b)$ we use that $(H_v^0)^{**}=H_v^\infty$ and Theorem \ref{DunSch}. 

Finally, the proof of $(b)\Rightarrow(a),$ that is, that a weakly compact multiplier is compact on $H_v^\infty$ is Theorem 5.2 of \cite{CHweighted}.
\end{proof}

As in the case of mixed norm spaces, we find that there are not nontrivial strongly continuous semigroups of weighted composition operators on $H_v^\infty.$

\begin{thm}
No nontrivial semigroup of analytic functions induces a strongly continuous semigroup of weighted composition operators on $H_v^\infty.$ In other words, $$[\varphi_t',H_v^\infty]\subsetneq H_v^\infty.$$
\end{thm}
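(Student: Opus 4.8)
The plan is to mimic the proof of the corresponding mixed-norm statement (the theorem that no nontrivial semigroup is strongly continuous on $H(p,\infty,\alpha)$), using Theorem \ref{subspace} together with the quasi-normality of $v$ and the pointwise growth estimate furnished by the associated weight $\tilde v$. Suppose for contradiction that $[\varphi_t',H_v^\infty]=H_v^\infty$. By Theorem \ref{subspace} this forces $H_v^\infty\subseteq\{f\in H_v^\infty:(Gf)'\in H_v^\infty\}$, i.e. $(Gf)'\in H_v^\infty$ for every $f\in H_v^\infty$. Since $f\in H_v^\infty$ if and only if $f'\in H^\infty_{(1-r^2)v(r)}$ (this is exactly quasi-normality, Theorem \ref{lusky} together with the definition recalled at the end of Section 2), the condition $(Gf)'\in H_v^\infty$ is equivalent to $Gf\in H^\infty_{(1-r^2)v(r)}$. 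So the standing assumption says precisely that the multiplier $M_G$ maps $H_v^\infty$ boundedly into $H^\infty_{(1-r^2)v(r)}$; by the closed graph theorem it is bounded, with some norm $\|M_G\|$.

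Next I would extract a pointwise inequality from this boundedness and test it against the extremal functions of the weighted Banach space. For each $z\in\mathbb D$ the results of \cite{BBT} recalled in Section 2 give a function $f_z\in H_v^\infty$ with $\|f_z\|_v\le1$ and $|f_z(z)|=1/\tilde v(z)$. Applying $\|M_Gf_z\|_{(1-r^2)v(r)}\le\|M_G\|$ at the point $z$ gives
\begin{align*}
(1-|z|^2)v(z)\,|G(z)|\,\frac{1}{\tilde v(z)}=(1-|z|^2)v(z)\,|G(z)f_z(z)|\le\|M_G\|.
\end{align*}
Since $\tilde v\le v$ pointwise (the extremal bound for $H^\infty_v$ cannot exceed $1/v$, because $f\mapsto f$ with $\|f\|_v\le1$ already gives $|f(z)|\le 1/v(z)$), we get $(1-|z|^2)|G(z)|\le\|M_G\|\,\tilde v(z)/v(z)\le\|M_G\|$, hence $|G(z)|\le C(1-|z|)$ for all $z\in\mathbb D$. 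Actually the cleanest route avoids comparing $v$ and $\tilde v$: use instead the relation $H_v^\infty=H_{\tilde v}^\infty$ with identical norms and work throughout with $\tilde v$, so that $(1-|z|^2)\tilde v(z)|G(z)|/\tilde v(z)=(1-|z|^2)|G(z)|\le\|M_G\|$ directly.

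Finally, letting $|z|\to1$ in $|G(z)|\le C(1-|z|)$ shows that $G$ vanishes identically on the boundary in the sense that its nontangential behaviour is forced, and since $G$ is analytic and bounded by $C(1-|z|)\to0$ on the whole disk, the maximum principle applied on circles $|z|=r$ gives $|G(z)|\le C(1-r)$ for $|z|\le r$; letting $r\to1$ yields $G\equiv0$, i.e. the semigroup is trivial, the desired contradiction. The main obstacle — and it is a minor one — is making sure the "derivative lands in a weighted Banach space of the same family" step is applied in the correct direction and with the correct weight: one needs exactly the quasi-normality hypothesis $H_v^\infty=\mathcal B^\infty_{(1-r^2)v(r)}$ to convert the condition $(Gf)'\in H_v^\infty$ into the multiplier statement $M_G:H_v^\infty\to H^\infty_{(1-r^2)v(r)}$, and then to recognize that the extremal functions $f_z$ of $H_v^\infty=H_{\tilde v}^\infty$ are what make the test sharp. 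Everything else is the same computation as in the mixed-norm case, with $(1-r)^{\alpha}$ replaced by $\tilde v(r)$.
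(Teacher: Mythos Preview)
Your overall strategy matches the paper's, but the crucial step --- the one you yourself flag as ``the main obstacle'' --- is carried out in the wrong direction, and this invalidates the argument.

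Quasi-normality of $v$ says $H_v^\infty=\mathcal B_{(1-r^2)v(r)}$, i.e.\ $h\in H_v^\infty \Longleftrightarrow h'\in H^\infty_{(1-r^2)v(r)}$. You need the \emph{inverse} direction: from $(Gf)'\in H_v^\infty$ you want information about $Gf$. That requires a weight $w$ with $\mathcal B_{v}=H_w^\infty$, i.e.\ $(1-r^2)w(r)\approx v(r)$, so $w(r)\approx v(r)/(1-r)$. Thus the correct multiplier statement is $M_G:H_v^\infty\to H^\infty_{v(r)/(1-r)}$, exactly as in the paper, not $M_G:H_v^\infty\to H^\infty_{(1-r^2)v(r)}$ as you write. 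With your (incorrect) target weight, testing against $f_z$ gives only
\[
(1-|z|^2)\,|G(z)|\;\le\;\|M_G\|,
\]
that is $|G(z)|\le C/(1-|z|)$, which is vacuous for an infinitesimal generator and certainly does not force $G\equiv0$. Your subsequent line ``hence $|G(z)|\le C(1-|z|)$'' is an algebra slip that hides this. With the correct weight $v(r)/(1-r)$, the same test function produces
\[
\frac{v(z)}{1-|z|}\,|G(z)f_z(z)|\le\|M_G\|\quad\Longrightarrow\quad \frac{|G(z)|}{(1-|z|)\,\tilde v(z)}\cdot v(z)\le\|M_G\|,
\]
and working throughout with $\tilde v$ (since $H_v^\infty=H_{\tilde v}^\infty$) gives $|G(z)|\le C(1-|z|)$, hence $G\equiv0$.

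A smaller point: the inequality between $v$ and $\tilde v$ goes the other way. From $|f(z)|\le\|f\|_v/v(z)$ one gets $\sup\{|f(z)|:\|f\|_v\le1\}\le1/v(z)$, so $\tilde v(z)\ge v(z)$, not $\tilde v\le v$. This does not affect the corrected argument if you follow your own advice and work with $\tilde v$ throughout.
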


\begin{proof}
By Theorem \ref{lusky}, our problem is to find the semigroups with generator $G$ such that $$H_v^\infty\subseteq\{f\in H_v^\infty :Gf\in  H_{v(r)/(1-r)}^\infty\}.$$ To find the analytic functions $G$ such that $Gf\in  H_{v(r)/(1-r)}^\infty$ for every $f\in H_v^\infty,$ we proceed as in the proof of Lemma \ref{multiplierweighted}. Suppose the multiplier $M_G$ is bounded from $H_{v(z)}^\infty$ to $H_{v(r)/(1-r)}^\infty$, then $$|G(z)f(z)|\leq\frac{\|M_G\|\|f\|_{v}(1-|z|)}{\tilde{v}(z)}.$$ Taking $f_z$ in $H_{v(z)}^\infty$ as a function with $|f_z(z)|=1/\tilde{v}(z)$ and $\|f_z\|=1,$ we get that $$|G(z)f_z(z)|=\frac{|G(z)|}{\tilde{v}(z)}\leq\frac{\|M_G\|(1-|z|)}{\tilde{v}(z)}$$ for any $z\in\mathbb{D},$ and therefore $G\equiv0.$

\end{proof}

Finally we study whether $H_v^0 = [\varphi_t',H_v^0],$ in the case where the Denjoy-Wolff point is $b\in\mathbb{D}$ and in the case where $b\in\mathbb{T}.$ 

\begin{thm}\label{denjoyweighted} Let $\{\varphi_t\}$ be a semigroup with Denjoy-Wolff point $b\in\mathbb{D}.$ Then 
$$H_v^0=[\varphi_t',H_v^\infty]\Leftrightarrow \frac{1}{P}\in H_0(\infty,\infty,1).$$ 
\end{thm}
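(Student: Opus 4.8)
The plan is to mirror the proof of Theorem \ref{denjoy} from the mixed norm setting, replacing the mixed norm machinery with its weighted Banach space counterparts developed earlier in this section. Since the Denjoy-Wolff point is $b\in\mathbb{D}$, we may assume without loss of generality (by conjugating the semigroup with a disk automorphism, which induces an isomorphism of $H_v^\infty$ up to equivalence of norms) that $b=0$, so that the generator has the form $G(z)=-zP(z)$ with $\operatorname{Re}P\geq 0$. The weighted Banach spaces $H_v^\infty$ satisfy hypothesis (i) of Proposition \ref{subspaceoperator} trivially (they contain constants), and hypothesis (ii) holds because we are assuming throughout that $\{\varphi_t\}$ induces a bounded semigroup; moreover $H_v^\infty$ satisfies $f\in H_v^\infty\Leftrightarrow \frac{f(z)-f(0)}{z}\in H_v^\infty$ since division by $z$ of a function vanishing at $0$ preserves the radial growth bound (this uses the maximum principle / Schwarz-type estimate on circles $|z|=r$). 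Hence Proposition \ref{subspaceoperator} applies and gives
$$[\varphi_t',H_v^\infty]=\overline{H_v^\infty\cap\left(W_\gamma(H_v^\infty)\oplus\mathbb{C}\right)},\qquad W_\gamma f(z)=\frac{1}{P(z)}\int_0^z f(\zeta)\,d\zeta.$$

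Next I would observe that $\frac1P$ always belongs to $H(\infty,\infty,1)$: since $\operatorname{Re}P\geq 0$, the function $\frac1P$ also has nonnegative real part, hence maps $\mathbb{D}$ into a half-plane, which forces $\left|\frac{1}{P(z)}\right|\leq\frac{C}{1-|z|}$. Therefore by Proposition \ref{wgboundedweighted} (applied with the quasi-normal weight $v$) the operator $W_\gamma$ is bounded on $H_v^\infty$, so $W_\gamma(H_v^\infty)\oplus\mathbb{C}\subseteq H_v^\infty$ and the intersection in the displayed formula is superfluous:
$$[\varphi_t',H_v^\infty]=\overline{W_\gamma(H_v^\infty)\oplus\mathbb{C}}.$$
Since $\overline{W_\gamma(H_v^\infty)\oplus\mathbb{C}}\supseteq H_v^0$ always (the left side contains a closed subspace on which $\{T_t\}$ is strongly continuous, and $H_v^0=[\varphi_t',H_v^0]$ since polynomials are dense in $H_v^0$ and Theorem \ref{Sarasonwco}-type reasoning applies — or more directly the inclusion $H_v^0\subseteq[\varphi_t',H_v^\infty]$ follows because polynomials lie in $[\varphi_t',H_v^\infty]$ and $H_v^0$ is their closure), the equality $H_v^0=[\varphi_t',H_v^\infty]$ holds if and only if $\overline{W_\gamma(H_v^\infty)\oplus\mathbb{C}}\subseteq H_v^0$, equivalently $W_\gamma(H_v^\infty)\subseteq H_v^0$ (using that $H_v^0$ is closed and contains the constants).

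Finally, by Theorem \ref{wgcompactweighted}, for $g=\frac1P\in H(\infty,\infty,1)$ the condition $W_g\colon H_v^\infty\to H_v^0$ is equivalent to $g=\frac1P\in H_0(\infty,\infty,1)$. Chaining the equivalences yields $H_v^0=[\varphi_t',H_v^\infty]\Leftrightarrow \frac1P\in H_0(\infty,\infty,1)$, as claimed. The main obstacle I anticipate is the verification, needed to invoke the second part of Proposition \ref{subspaceoperator}, that $H_v^\infty$ satisfies $f\in H_v^\infty\Leftrightarrow\frac{f(z)-f(b)}{z-b}\in H_v^\infty$ for $b\in\mathbb{D}$, together with cleanly reducing to $b=0$; for the rotation case $b=0$ this is an elementary estimate, but one should check that conjugation by a general disk automorphism (when $b\neq 0$) still produces a quasi-normal weight and preserves the relevant structure, or else argue that only the rotation $b=0$ reduction is genuinely needed here. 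Once that bookkeeping is settled, the rest is a direct substitution of Propositions \ref{wgboundedweighted} and Theorem \ref{wgcompactweighted} into the argument of Theorem \ref{denjoy}.
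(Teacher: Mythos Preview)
Your proposal is correct and follows essentially the same route as the paper: reduce to $b=0$, invoke Proposition~\ref{subspaceoperator} with $W_\gamma f(z)=\frac{1}{P(z)}\int_0^z f$, use $\frac{1}{P}\in H(\infty,\infty,1)$ together with Proposition~\ref{wgboundedweighted} to drop the intersection, and conclude via Theorem~\ref{wgcompactweighted}. Your write-up is in fact more careful than the paper's (which is a one-paragraph sketch referring back to Theorem~\ref{denjoy}); the concern you flag about whether conjugation by a general disk automorphism preserves quasi-normality of the weight is a genuine bookkeeping point that the paper also glosses over, so you are not missing anything relative to the original argument.
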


\begin{proof}
Following the discussion of Theorem \ref{denjoy}, we know that, if $b=0,$ the operator $W_\gamma$ is bounded on $H_v^\infty$ and, by Proposition \ref{wgboundedweighted}, $$W_\gamma(H_v^\infty)\oplus \mathbb{C}\subset H_v^\infty.$$ 
Thus $$\overline{H_v^\infty\cap(W_\gamma(H_v^\infty)\oplus \mathbb{C})}=\overline{W_\gamma(H_v^\infty)\oplus \mathbb{C}}$$ and $$H_v^0=[\varphi_t,H_v^\infty]=\overline{W_\gamma(H_v^\infty)\oplus \mathbb{C}}$$ if and only if $W_\gamma(H_v^\infty)\subseteq H_v^0.$ By Proposition \ref{wgcompactweighted}, this is equivalent to $\frac{1}{P}\in H_0(\infty,\infty,1).$ 
\end{proof}

%Now, for $b=1,$ recall that $$[\varphi_t',H(p,\infty,\alpha)]=\overline{\left\{f\in H(p,\infty,\alpha):((1-z)^2P\,f)'\in H(p,\infty,\alpha)\right\}}$$ or equivalently, by Lemma \ref{derivative}, $$[\varphi_t',H(p,\infty,\alpha)]=\overline{\left\{f\in H(p,\infty,\alpha):(1-z)^2P\,f\in H(p,\infty,\alpha-1)\right\}}.$$

\begin{thm}
For every nontrivial semigroup of analytic functions with Denjoy-Wolff point $b\in\mathbb{T}$ we have $$H_v^0\subsetneq [\varphi_t',H_v^\infty].$$
\end{thm}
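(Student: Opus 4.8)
The plan is to produce a single function $f\in H_v^\infty\setminus H_v^0$ that lies in $[\varphi_t',H_v^\infty]$; since $H_v^0$ is closed and $H_v^0\subseteq[\varphi_t',H_v^\infty]$, this already yields the strict inclusion. As $v$ is radial we may compose the semigroup with a rotation and assume $b=1$, so that $G(z)=(1-z)^2P(z)$ with $\text{Re}\,P\ge0$ and $P\not\equiv0$; by the Herglotz representation $|P(z)|\le C/(1-|z|)$, that is, $P\in H(\infty,\infty,1)$. By Theorem~\ref{subspace}, $f\in[\varphi_t',H_v^\infty]$ whenever $f\in H_v^\infty$ and $((1-z)^2Pf)'\in H_v^\infty$, so it is enough to find one such $f$ outside $H_v^0$.

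The strategy mirrors the mixed norm argument. First, one isolates a sufficient membership criterion: writing $((1-z)^2Pf)'=\bigl[(1-z)^2f\bigr]'P+(1-z)^2fP'$, using the quasi-normality of $v$ (Theorem~\ref{lusky}) and the Cauchy estimate $|P'(z)|\le C/(1-|z|)^2$, and absorbing $P$ via Lemma~\ref{multiplierweighted} (valid since $P\in H(\infty,\infty,1)$), the two terms are controlled once $(1-z)^2f$ lies in a weighted Banach space $H^\infty_w$ with $w(r)\approx v(r)/(1-r)^2$. Granting this, the witness is built by taking $u\in H^\infty_w\setminus H^0_w$ whose norm is attained along the positive radius --- for instance a lacunary series $u(z)=\sum_k c_k z^{n_k}$ with $c_k\approx1/w(1-1/n_k)$, so that $w(1-1/n_k)\,|u(1-1/n_k)|$ stays bounded below --- and setting $f(z)=u(z)/(1-z)^2$. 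Then $v(|z|)|f(z)|\approx w(|z|)|u(z)|\,(1-|z|)^2/|1-z|^2\le C$, so $f\in H_v^\infty$; at $r_k=1-1/n_k$ one gets $v(r_k)|f(r_k)|\approx w(r_k)|u(r_k)|\ge c>0$, so $f\notin H_v^0$; and $(1-z)^2f=u\in H^\infty_w$, which places $f$ in $[\varphi_t',H_v^\infty]$. For the model weight $v(r)\approx(1-r)^\gamma$ (with $G$ having no boundary singularities off $1$) this is entirely explicit: one takes $f(z)=(1-z)^{-\gamma}$, the exact analogue of the mixed-norm witness $(1-z)^{-(\alpha+1/p)}$, which lies in $H_v^\infty\setminus H_v^0$, and verifies by hand that $(Gf)'=-(2-\gamma)(1-z)^{1-\gamma}P+(1-z)^{2-\gamma}P'\in H_v^\infty$ using $1-|z|\le|1-z|$ and the above bounds on $P,P'$.

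The hard part is making this go through for an \emph{arbitrary} quasi-normal weight and an arbitrary nontrivial generator. The weight bookkeeping is delicate, since $v(r)/(1-r)^2$ need not itself be a (quasi-normal) weight when $v$ decays slowly, and in that range one cannot reduce to a bare membership statement and must instead estimate $((1-z)^2Pf)'$ by hand. The mechanism that makes such an estimate possible --- and the reason the bound $P\in H(\infty,\infty,1)$ alone does not suffice --- is that near $z=1$ the factor $(1-z)^2$ forces $G$, and hence (by Cauchy estimates in Stolz angles) $G'$, to be genuinely small, so the growth of $f$ near the Denjoy--Wolff point is absorbed into the generator. Carrying this through for a $v$-adapted witness, while keeping $f$ from being too large near any boundary singularities of $P$ on $\mathbb{T}\setminus\{1\}$ (where $G$ is not small), is the technical core; a clean way to organize it is to conjugate by the Koenigs map $h$ (which sends $b=1$ to $\infty$ and the semigroup to translation on a half-plane-type domain) or to work with the operator $W_{1/G}$ of Proposition~\ref{subspaceoperator}, choosing its symbol to compensate the singularities of $1/G$ away from $1$.
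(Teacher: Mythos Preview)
Your first two paragraphs reproduce the paper's proof: rotate to $b=1$, use quasi-normality to replace $(Gf)'\in H_v^\infty$ by $Gf=(1-z)^2Pf\in H^\infty_{v(r)/(1-r)}$, absorb $P$ via the uniform Herglotz bound $(1-|z|)|P(z)|\le C$ so that it suffices to have $(1-z)^2f\in H^\infty_{v(r)/(1-r)^2}$, and then exhibit one $f\in H_v^\infty\setminus H_v^0$ satisfying this last condition. The paper's own argument consists of exactly these four lines; your lacunary witness is a more explicit variant of its final test-function step, where it simply invokes an extremal function for the associated weight~$\tilde{v}$.

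Your third paragraph, however, is not a proof and manufactures obstacles that the reduction has already removed. That $w(r)=v(r)/(1-r)^2$ may be unbounded is harmless: ``$(1-z)^2f\in H_w^\infty$'' is only the pointwise inequality $\sup_z w(|z|)\,|1-z|^2|f(z)|<\infty$, and the multiplier step into $H^\infty_{v/(1-r)}$ uses nothing beyond the Herglotz bound. Nor do possible singularities of $P$ on $\mathbb{T}\setminus\{1\}$ require separate handling: the very condition $(1-z)^2f\in H^\infty_w$ forces $v(|z|)|f(z)|\le C\,(1-|z|)^2/|1-z|^2\to0$ as $z$ approaches any boundary point other than $1$, so the witness is automatically small there and the uniform bound on $P$ does the rest. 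There is no role for Stolz-angle Cauchy estimates on $G'$, Koenigs conjugation, or the operator $W_{1/G}$. The only genuine task---producing a single function in $H^\infty_w$ whose $w$-weighted modulus stays bounded below along the radius to $1$---is a question about $v$ alone, not about $P$ or $G$; your paragraph~3 machinery is aimed at the wrong target. Drop it, and what remains is exactly the paper's proof.
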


\begin{proof}
Suppose $H_v^0 = [\varphi_t',H_v^\infty].$ Since $$[\varphi_t',H_v^\infty]=\overline{\left\{f\in H_v^\infty:(1-z)^2P\,f\in H_{v(r)/(1-r)}^\infty\right\}}$$ and $P\in H(\infty,\infty,1),$ then if $f$ is such that $(1-z)^2f\in H_{v(r)/(1-r)^2}^\infty$ then $f\in [\varphi_t',H_v^\infty].$ If we take $f\in H_v^\infty$ such that $|f_z(z)|=1/\tilde{v}(z),$ we have that $f_z\not\in H_v^0$ and $(1-z)^2f_z\in H_{v(r)/(1-r)^2}^\infty,$ thus $f_z\in [\varphi_t',H_v^\infty].$ Therefore, $H_v^0\neq [\varphi_t',H_v^\infty].$
\end{proof}

\bibliographystyle{amsplain}

\end{document}